\documentclass[leqno]{amsart}

\usepackage{amsmath,amssymb,amsthm}
\usepackage{mathrsfs}
\usepackage{braket}
\usepackage{graphicx}
\usepackage{amscd}
\usepackage[all]{xy}
\usepackage{comment}
\usepackage{eucal}

\theoremstyle{definition}
\newtheorem{dfn}{Definition}[section]
\newtheorem{lem}[dfn]{Lemma}
\newtheorem{cor}[dfn]{Corollary}
\newtheorem{prp}[dfn]{Proposition}
\newtheorem{theom}[dfn]{Theorem}
\newtheorem{rem}[dfn]{Remark}
\newtheorem{fct}[dfn]{Fact}
\newtheorem{ex}[dfn]{Example}

\newtheorem{prob}[dfn]{Problem}
\newcommand{\mf}{\mathfrak}
\DeclareMathOperator{\End}{\sf End}

\DeclareMathOperator{\Hom}{\sf Hom}

\DeclareMathOperator{\str}{\sf str}
\DeclareMathOperator{\ch}{\sf ch}
\DeclareMathOperator{\id}{\sf id}
\DeclareMathOperator{\Ind}{\sf Ind}
\DeclareMathOperator{\ad}{\sf ad}
\DeclareMathOperator{\Ad}{\sf Ad}
\DeclareMathOperator{\Ker}{\sf Ker}
\DeclareMathOperator{\Res}{\sf Res}
\DeclareMathOperator{\spn}{\sf span}

\makeatletter

\@addtoreset{equation}{section}
\makeatother

\title[On Resolutions of highest weight modules]{On Resolutions of highest weight modules over the $\mathcal{N}=2$ superconformal algebra}
\author{Shinji KOSHIDA}
\address[Shinji KOSHIDA]{Department of Basic Science, The University of Tokyo, Tokyo, Japan 153-8902}
\email{koshida@vortex.c.u-tokyo.ac.jp}

\author{Ryo SATO}
\address[Ryo SATO]{Institute of Mathematics, Academia Sinica, Taipei, Taiwan 10617}
\email{sato@gate.sinica.edu.tw}

\subjclass[2010]{Primary 17B69; Secondary 17B10, 17B68, 81R10}
\keywords{Vertex operator superalgebra, Superconformal algebra, Bernstein--Gelfand--Gelfand resolution, Zhu's algebra, Fusion rule}

\begin{document}

\maketitle

\begin{abstract}
In this paper we construct Bernstein--Gelfand--Gelfand type resolution of
simple highest weight modules
over the simple $\mathcal{N}=2$ vertex operator superalgebra
of central charge $c_{p,p'}=3\left(1-\frac{2p'}{p}\right)$
by means of the Kazama--Suzuki coset construction.
As an application, 
we compute the twisted Zhu algebras of 
the simple $\mathcal{N}=2$ vertex operator superalgebra.
We also compute the Frenkel--Zhu bimodule structure associated with
a certain simple highest weight module of central charge $c_{3,2}=-1$.
\end{abstract}

\section{Introduction}

\subsection{Background}

One of the most fundamental problems in the study of $2$-dimensional conformal field theory
is to compute fusion rules between primary fields
(see e.g.\,\cite{verlinde1988fusion}).
In \cite{feigin1994fusion}, B. Feigin and F. Malikov computed
the fusion rules of the Wess--Zumino--Witten model
associated with the affine Lie algebra $\widehat{\mf{sl}}_{2}$ at Kac--Wakimoto admissible levels.
Though this model has the remarkable modular invariance property (see \cite{KW88} for detail), 
a naive generalization of the Verlinde formula,
which is originally proposed in \cite{verlinde1988fusion}, fails in general
(see \cite{koh1988fusion}, \cite{mathieu1990fractional}, \cite{awata1992fusion}).
After more than two decades of studies (see \cite{CR12} for historical detail), in \cite{creutzig2013modular}, T.\,Creutzig and D.\,Ridout conjectured
a consistent generalization of the Verlinde formula (partially proved in \cite[Corollary 7.7]{creutzig2017braided})
which includes some non-standard simple $\widehat{\mf{sl}}_{2}$-modules,
called relaxed highest weight modules. 
This type of $\widehat{\mf{sl}}_{2}$-module was
initially studied by B.\,Feigin, A.\,Semikhatov, and I.\,Tipunin \cite{FST98}
in connection with the $\mathcal{N}=2$ superconformal algebra
 by the Kazama--Suzuki coset construction.
In fact, as was observed in \cite{FST98}, relaxed highest weight $\widehat{\mf{sl}}_{2}$-modules 
turn out to correspond to standard $\mathcal{N}=2$ highest weight modules.
Recently, in \cite{sato2017modular}, the second author proposed
a conjectural Verlinde formula in the $\mathcal{N}=2$ side,
which is a counterpart to Creutzig--Ridout's formula in the $\widehat{\mf{sl}}_{2}$ side.
It is worth noting that, by some technical reasons, the computation of fusion rules in the 
$\mathcal{N}=2$ side seems to be easier than that in the $\widehat{\mf{sl}}_{2}$ side.
However, the structure of the fusion rules has not yet been determined 
in the both sides, except for few cases.
For example, see \cite{ridout2011fusion} for a detailed analysis in the case of $\widehat{\mf{sl}}_{2}$
at level $k=-\frac{1}{2}$.

In terms of vertex operator superalgebras, 
fusion rules can be estimated by the theory of Frenkel--Zhu's bimodule developed in \cite{frenkel1992vertex} (see also \cite{kac1994vertex}, \cite{li1999determining}).
In the theory, twisted Zhu's algebra, which is originally introduced by Y.\,Zhu in \cite{zhu1996modular} and generalized by several authors (e.g.\,\cite{kac1994vertex}, \cite{dong1998twisted}, \cite{xu1998introduction}, \cite{dong1998vertex}, \cite{de2006finite}), plays a prominent role.
Unlike in the case of the Virasoro algebra, unfortunately,
highest weight modules over the $\mathcal{N}=2$ superconformal algebra 
(even in the Neveu--Schwarz sector) may contain subsingular vectors\footnote{Let $I$ be the proper submodule of a highest weight module $\mathcal{M}$
generated by singular vectors.
Then a non-singular vector in $\mathcal{M}$ is called subsingular if
its image in $\mathcal{M}/I$ is singular.
}
discovered in 
\cite{gato1997chiral}.
See e.g.\,\cite[\S1]{dorrzapf1998embedding} for further information.
Due to such a circumstance,
the twisted Zhu algebra of the simple $\mathcal{N}=2$ vertex operator superalgebra 
(see Appendix \ref{N2VOSA} for the definition) has not yet been completely determined 
to the best of our knowledge. 
See \cite[\S3]{eholzer1997unitarity} for partial results.

\subsection{Main Results}\label{mainresults}
In this paper we study the repressentation theory of 
the $\mathcal{N}=2$ vertex operator superalgebra
and determine its twisted Zhu algebra(s)
as a first step towards the full understanding of the corresponding Verlinde formula.

Our main tool is an exact functor 
induced by the Kazama--Suzuki coset construction
(see \cite[\S4.1]{sato2016equivalences} for the definition).
More precisely, this functor is defined as one from a certain full subcategory of $\widehat{\mf{sl}}_{2}$-modules of level $k\in\mathbb{C}\setminus\{-2\}$ to a similar full subcategory of modules over the $\mathcal{N}=2$ superconformal algebra of central charge $c=\frac{3k}{k+2}$.
From this view point, 
the structure of $\mathcal{N}=2$ Verma modules turns out to reflect that of relaxed Verma modules over $\widehat{\mf{sl}}_{2}$ with spectral flow twists
(see \cite{FST98}, \cite{sato2016equivalences} for more detail).

In the present paper, the level $k$ is supposed to be a Kac--Wakimoto admissible level
and the corresponding central charge $c$ is given by
$$c=c_{p,p'}:=3\left(1-\frac{2p'}{p}\right),$$ 
where $(p,p')$ is a pair of coprime positive integers such that $p\geq2$.
Then our main results in \S3 are summarized as follows (see Remark \ref{Main1ex}).
\begin{theom}\label{Main1}
Let $L_{c}$ be the simple $\mathcal{N}=2$ vertex operator superalgebra
of central charge $c=c_{p,p'}$.
Then every simple $L_{c}$-module admits Bernstein--Gelfand--Gelfand (BGG) type resolution in terms of 
spectral flow twisted modules (see Appendix \ref{SF} for the definition) of certain Verma type modules.
\end{theom}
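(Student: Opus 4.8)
The plan is to transport a Bernstein--Gelfand--Gelfand resolution from the affine side to the $\mathcal{N}=2$ side through the Kazama--Suzuki coset functor $\mathcal{F}$ of \cite[\S4.1]{sato2016equivalences}. First I would combine the classification of simple $L_{c}$-modules with the dictionary of \cite{FST98}, \cite{sato2016equivalences}: since $c=c_{p,p'}$ equals $\tfrac{3k}{k+2}$ for the corresponding Kac--Wakimoto admissible level $k$, every simple $L_{c}$-module is, after a suitable spectral flow twist, isomorphic to $\mathcal{F}(\mathcal{N})$ for a simple module $\mathcal{N}$ over the simple affine vertex operator algebra $L_{k}(\mf{sl}_{2})$, namely either an ordinary admissible highest weight module or a simple relaxed highest weight module. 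Because $\mathcal{F}$ is exact and, on the relevant subcategory, is an equivalence onto its essential image (hence takes nonzero modules to nonzero modules), it suffices to build, for each such simple $\widehat{\mf{sl}}_{2}$-module, a resolution by spectral-flow twists of Verma-type $\widehat{\mf{sl}}_{2}$-modules and then apply $\mathcal{F}$ term by term; exactness of $\mathcal{F}$ keeps the image exact, $\mathcal{F}$ carries a (twisted) relaxed Verma module to a (twisted) $\mathcal{N}=2$ Verma module and a (twisted) ordinary Verma module to a (twisted) $\mathcal{N}=2$ topological Verma module, and, since spectral flow is an exact auto-equivalence on each side intertwined by $\mathcal{F}$, twisting back at the end completes the reduction.

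Next I would produce the affine resolutions. For an ordinary admissible highest weight module $L(\lambda)$ this is the classical affine BGG resolution: at an admissible level the Verma modules in the block of $\lambda$ form a linear embedding pattern governed by the integral Weyl group, which for $\widehat{\mf{sl}}_{2}$ is a copy of the infinite dihedral group, so one writes down the complex $\cdots\to\bigoplus_{\ell(w)=2}M(w\cdot\lambda)\to\bigoplus_{\ell(w)=1}M(w\cdot\lambda)\to M(\lambda)\to L(\lambda)\to0$ and checks exactness from the Malikov--Feigin--Fuks description of the maximal submodule together with a formal-character (Euler--Poincar\'e) argument, in the spirit of \cite{feigin1994fusion}. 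For a simple relaxed highest weight module I would argue by cases: a generic relaxed Verma module is already simple and is its own resolution; a degenerate one sits in a short exact sequence whose outer terms are spectral-flow twists of ordinary Verma modules, and its simple quotient is related in the same way to twists of ordinary simple highest weight modules, so one glues the two ordinary BGG resolutions by a horseshoe / mapping-cone construction to obtain a resolution of the simple relaxed module by spectral-flow twists of (relaxed and ordinary) Verma modules.

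Applying $\mathcal{F}$ to these complexes and undoing the initial spectral flow then yields, for every simple $L_{c}$-module, the asserted BGG-type resolution by spectral-flow twists of $\mathcal{N}=2$ Verma-type modules. The hard part, I expect, is the relaxed case on the affine side: relaxed Verma modules are not objects of category $\mathcal{O}$, so the Weyl-group combinatorics underpinning the usual BGG machinery is not directly available, and one must determine exactly which degenerations occur, the Loewy structure of the reducible relaxed Verma modules, and the spectral-flow parameters of their composition factors, precisely enough that the glued complex is genuinely exact with terms of the advertised type. A secondary, more bookkeeping-style difficulty is to translate the affine Weyl-group labels of the resolving Verma modules into the spectral-flow labels of their $\mathcal{F}$-images, so that the resolution can be stated transparently on the $\mathcal{N}=2$ side.
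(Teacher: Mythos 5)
Your overall strategy---classify simple $L_c$-modules, build BGG resolutions on the $\widehat{\mf{sl}}_2$ side, transport them through the exact and fully faithful Kazama--Suzuki functor $\Omega_j^+$, and undo spectral flow at the end---is indeed the paper's. For the atypical modules $\mathcal{L}(r,s)^\theta$ the reduction to Malikov's affine BGG resolution followed by a spectral flow twist is also what the paper does.

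The gap is in the typical (relaxed) case. You claim that a generic relaxed Verma module is already simple and is its own resolution. This is false at a Kac--Wakimoto admissible level: for $(r,s)\in\mathcal{I}_{\sf BPZ}$ and $j\notin\{j_{r,s},j_{p-r,p'-s}\}+\mathbb{Z}$, the relaxed Verma module $R_{r,s;j}$ is not simple, and the simple quotient $L_{r,s;j}$ is resolved only by an \emph{infinite} complex of relaxed Verma modules (the paper's Proposition 2.8). Your proposed dichotomy therefore does not even get started, and the fallback horseshoe/mapping-cone gluing you sketch would require exactly the detailed Loewy analysis of reducible relaxed Verma modules that you (rightly) flag as the hard part but do not supply.

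The paper's actual mechanism sidesteps that analysis entirely. It introduces the Arkhipov-type twisting functor ${\sf F}^{j}=\Res^{U_F}_{U}\circ\Ad(F^{j}_{0})^{*}\circ\Ind_{U}^{U_F}$ obtained by right-localizing at $F_{0}$. This functor is exact and, for $j$ generic, sends the ordinary Verma module $M_{j_{0},k}$ isomorphically onto the relaxed Verma module $R_{\Delta(j_{0}),j,k}$ (Lemma 2.5, Lemma 2.6). Applying ${\sf F}^{-j+j_{r,s}}$ termwise to Malikov's BGG resolution of $L(r,s)$ produces an exact complex of relaxed Verma modules resolving ${\sf F}^{-j+j_{r,s}}L(r,s)$; identifying the latter with the simple $L_{r,s;j}$ is then a formal-character comparison with the Creutzig--Ridout character formula, known from \cite{adamovic2017realizations} and \cite{kawasetsu2018relaxed}. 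This twisting-functor step---transporting a known resolution of a known character through an exact functor, rather than classifying degenerations of relaxed Verma modules---is the piece missing from your outline.
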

As a corollary, in \S\ref{uSV}, we prove the following (see Theorem \ref{Uniqueness} for detail).
\begin{theom}\label{Main2}
Every highest weight module over the $\mathcal{N}=2$ superconformal algebra 
of central charge $c_{p,p'}$ corresponding to 
a generalized Verma module for $\widehat{\mf{sl}}_{2}$
(see Lemma \ref{weyl} for the precise definition) contains a unique singular vector
which generates its maximum proper submodule.
\end{theom}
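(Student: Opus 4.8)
The plan is to deduce the statement from Theorem~\ref{Main1} together with the exactness and the equivalence property of the Kazama--Suzuki coset functor. \emph{Step 1: reduction to $\widehat{\mf{sl}}_2$.} By Lemma~\ref{weyl}, a module $\mathcal{M}$ of the kind occurring in the statement is, up to a spectral flow twist, the image under the coset functor of a generalized Verma (Weyl) module $\mathbb{V}_{k,\lambda}$ over $\widehat{\mf{sl}}_2$ at the admissible level $k$, with $\lambda$ dominant integral for the finite subalgebra $\mf{sl}_2\subset\widehat{\mf{sl}}_2$. Since the coset functor is exact and restricts to an equivalence onto a full subcategory (\cite{sato2016equivalences}; cf.\,\cite{FST98}) and spectral flow is an autoequivalence of the relevant category (Appendix~\ref{SF}), these operations preserve the lattice of submodules and carry singular vectors to singular vectors. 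It therefore suffices to prove the assertion for the corresponding $\mathcal{N}=2$ Verma type module $\mathcal{V}$: writing $L$ for its simple top, we must show that the unique maximal proper submodule $J=\Ker(\mathcal{V}\twoheadrightarrow L)$ is generated by a singular vector $v$ of $\mathcal{V}$, and that $v$ is, up to scalar, the only singular vector of $\mathcal{V}$.

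\emph{Step 2: generation by a single singular vector.} Applying Theorem~\ref{Main1} to $L$ yields a BGG type resolution
\[
\cdots \longrightarrow \mathcal{V}_2 \longrightarrow \mathcal{V}_1 \longrightarrow \mathcal{V}_0 \longrightarrow L \longrightarrow 0
\]
with $\mathcal{V}_0=\mathcal{V}$. For the highest weights at hand, each homological term is a \emph{single} spectral flow twist of a Verma type module, not a proper direct sum: on the $\widehat{\mf{sl}}_2$ side this is the parabolic resolution of $\mathbb{V}_{k,\lambda}$ relative to $\mf{sl}_2\subset\widehat{\mf{sl}}_2$, whose terms are indexed by the minimal-length coset representatives of $W_{\mathrm{fin}}\backslash W_{\mathrm{aff}}$, which for $\widehat{\mf{sl}}_2$ form a linearly ordered ``half-line'' --- one reads this off from the explicit resolutions recorded in Remark~\ref{Main1ex}. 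By exactness at $\mathcal{V}_0$ we get $J=\operatorname{im}(\mathcal{V}_1\to\mathcal{V}_0)$, which is a homomorphic image of the highest weight module $\mathcal{V}_1$; hence $J$ is generated by the image $v$ of the highest weight vector of $\mathcal{V}_1$, and this $v$ is a singular vector of $\mathcal{V}$.

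\emph{Step 3: uniqueness.} For uniqueness I would use the finer fact that $\mathcal{V}$ has length two. On the $\widehat{\mf{sl}}_2$ side this comes from the Malikov--Feigin--Fuks description of the embedding diagram of admissible-level Verma modules: for $\lambda$ dominant integral for $\mf{sl}_2$ the Weyl module $\mathbb{V}_{k,\lambda}$ is the quotient of the affine Verma module by the submodule generated by its finite singular vector, which deletes one of the two branches of the diagram and leaves exactly one further composition factor; alternatively it follows from the Euler characteristic of the resolution of Step~2 together with the analogous resolutions of the simple tops of the $\mathcal{V}_i$. Granting that $\mathcal{V}$ has just two composition factors, $L$ and one further simple module $L'$, we get $[J]=[L']$ in the Grothendieck group, so $J\cong L'$ is simple. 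A simple highest weight module has a one-dimensional highest weight space, and the nonzero vectors of that space are its only singular vectors; hence $v$ is the unique singular vector of $J$. Since every proper submodule of $\mathcal{V}$ is contained in $J$, the vector $v$ is the unique singular vector of $\mathcal{V}$. Transporting back through the coset functor and the spectral flow completes the proof.

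\emph{Expected main difficulty.} The real content is the $\widehat{\mf{sl}}_2$-side structural input invoked in Steps~2 and~3 --- the half-line shape of the resolution and the length-two property of the Weyl modules --- which is exactly what is built into (the proof of) Theorem~\ref{Main1}. Granting Theorem~\ref{Main1} and the exactness and equivalence properties of the coset functor, the statement is a formal corollary, the only delicate points being the compatibility of singular vectors and of submodule lattices with the coset functor and the spectral flow, and the bookkeeping of which simple modules occur.
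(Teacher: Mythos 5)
Your Step~1 (reduction to the $\widehat{\mf{sl}}_2$ side and general plan) is consistent with the paper, but Steps~2 and~3 contain genuine gaps that the paper's argument is specifically designed to avoid.

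\textbf{Step 2 is incorrect.} You assert that $\mathcal{V}_1$ is a ``highest weight module'' and that the image of its highest weight vector is a singular vector of $\mathcal{V}_0=\mathcal{V}(r)$. But $\mathcal{V}_1 = \mathcal{V}(2p-r)^{\,p-r}$ is a spectral flow twist with $\theta=p-r\geq 1$, and the paper explicitly warns (Appendix~\ref{SF}) that spectral flow twists of highest weight modules are not highest weight modules in general. Concretely, if $w$ denotes the cyclic generator, then $G^{-,\theta}_{1/2}w = G^-_{1/2-\theta}w$, which is nonzero for $\theta\geq 1$, so $w$ is not a singular vector of $\mathcal{V}_1$. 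Moreover, a direct weight check shows the image of $w$ in $\mathcal{V}(r)$ has $J_0$-eigenvalue $2aj_{r,0}+(p-r)$, whereas the generating singular vector $\mathcal{N}_{p,p'}(r)$ produced by the paper sits at $J_0$-eigenvalue $2aj_{r,0}$. Thus the BGG resolution only shows that the maximal proper submodule $J$ is \emph{cyclic}; it does not produce a singular generator. This is precisely the reason the paper introduces the auxiliary $\mathcal{N}=2$ Verma module $\mathcal{M}(r)$, which corresponds under the coset functor to the \emph{relaxed} Verma module $R(r)$ (not to $V(2p-r)$), and constructs the map $R(r)\to V(r)$ through the vector $F_0^{p-r}v_{p,p'}(r)$, which does have the correct $H_0$-weight $r-1$.

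\textbf{Step 3 is also problematic.} You claim $\mathcal{V}(r)$ has length two. But the parabolic BGG resolution (\ref{gBGG2}) has infinitely many nonzero terms, which is precisely the signal that the maximal proper submodule has a nontrivial further structure. The paper never asserts simplicity of $J$; its uniqueness is proven instead by a PBW eigenspace count on the affine side (Lemma~\ref{affunique}): the $(L_0^{\sf Sug},H_0)$-eigenspace with eigenvalue $\bigl((p-r)p',r-1\bigr)$ inside the maximal proper submodule of $V(r)$ is exactly $\mathbb{C}F_0^{p-r}v_{p,p'}(r)$, giving $\Hom_{\widehat{\mf{sl}}_2}(R(r),V(r))=\mathbb{C}f_{p,p'}(r)$. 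Full faithfulness of $\Omega^+_{j_{r,0}}$ (\cite[Theorem~4.4]{sato2016equivalences}) then transports this one-dimensionality to $\Hom_{\mf{ns}_2}(\mathcal{M}(r),\mathcal{V}(r))$. In summary, the paper's route is more direct and bypasses the BGG resolution entirely for this theorem; your proposal relies on the BGG resolution in a way that founders exactly on the point --- the interaction of spectral flow with singular vectors --- that the paper's use of relaxed Verma modules is engineered to handle.
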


In \S\ref{app}, we give two applications of the above singular vectors.
First, for general $c=c_{p,p'}$, we determine the twisted Zhu algebras $A_{\sigma}(L_{c})$ and $A_{\id}(L_{c})$
(see \S\ref{DefZhuSec} for the definition), where
$\sigma$ and $\id$ are the parity involution and the identity automorphism of $L_{c}$, respectively
(see Theorem \ref{Kernel1} and Corollary \ref{Kernel2} for detail).

\begin{theom}\label{Main3} For $c=c_{p,p'}$, we have the following.

\begin{enumerate}
\item
The $\sigma$-twisted Zhu algebra $A_{\sigma}(L_{c})$ is isomorphic to
$U(\mf{gl}_{1|1})/\langle \phi_{c}\rangle_{\sf ideal}$
for some $\phi_{c}\in U(\mf{gl}_{1|1})$ corresponding to $\mathcal{N}_{p,p'}$.
\item
The $\id$-twisted Zhu algebra $A_{\id}(L_{c})$ is isomorphic to
$\mathbb{C}[{\sf h},{\sf q}]/\langle f_{c},g_{c}\rangle_{\sf ideal}$
for some $f_{c},g_{c}\in \mathbb{C}[{\sf h},{\sf q}]$
corresponding to $\mathcal{N}_{p,p'}$ and $G_{-\frac{1}{2}}^{+}G_{-\frac{1}{2}}^{-}\mathcal{N}_{p,p'}$.
\end{enumerate}
\end{theom}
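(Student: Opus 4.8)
The plan is to compute first the twisted Zhu algebras of the \emph{universal} $\mathcal{N}=2$ vertex operator superalgebra $V_{c}$ of central charge $c=c_{p,p'}$, and then to descend to the simple quotient $L_{c}=V_{c}/\mathcal{I}$ by analysing the image of its maximal proper ideal $\mathcal{I}$. The key structural input is that $\mathcal{I}$ is generated, as a module over the $\mathcal{N}=2$ superconformal algebra, by a single singular vector $\mathcal{N}_{p,p'}$; this is Theorem \ref{Main2} applied to the vacuum module (which corresponds to a generalized Verma module for $\widehat{\mathfrak{sl}}_{2}$ under the Kazama--Suzuki coset). Granting the Zhu algebras of $V_{c}$, the standard identity $A_{\tau}(L_{c})\cong A_{\tau}(V_{c})/A_{\tau}(\mathcal{I})$ for $\tau\in\{\sigma,\id\}$ reduces the problem to describing the two-sided ideal $A_{\tau}(\mathcal{I})$, which by the Frenkel--Zhu formalism adapted to the twisted and super setting (cf.\ \cite{frenkel1992vertex}, \cite{dong1998twisted}, \cite{de2006finite}) is generated by the images in $A_{\tau}(V_{c})$ of $\mathcal{N}_{p,p'}$ together with finitely many of its descendants.

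For part (1) I would work in the Ramond sector attached to the parity involution $\sigma$, where the odd fields $G^{\pm}$ acquire integer modes. The $\sigma$-twisted Zhu algebra $A_{\sigma}(V_{c})$ is then generated by the images of $\omega$, $J$, $G^{+}$, $G^{-}$, and its product is controlled by the Ramond zero modes $L_{0}$, $J_{0}$, $G_{0}^{+}$, $G_{0}^{-}$. A direct check with the $\mathcal{N}=2$ (anti)commutation relations shows that, with $E:=2L_{0}-\tfrac{c}{12}$, $N:=J_{0}$ and $\psi^{\pm}:=G_{0}^{\pm}$, one obtains precisely the defining relations of $\mathfrak{gl}_{1|1}$, and since $V_{c}$ imposes no further relations, $A_{\sigma}(V_{c})\cong U(\mathfrak{gl}_{1|1})$. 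The image $\phi_{c}$ of $\mathcal{N}_{p,p'}$ lives in $U(\mathfrak{gl}_{1|1})$, and every descendant $G_{0}^{\pm}\mathcal{N}_{p,p'}$, $L_{0}$-descendant or $J_{0}$-descendant of the singular vector has image obtained from $\phi_{c}$ by left or right multiplication; hence $A_{\sigma}(\mathcal{I})=\langle\phi_{c}\rangle_{\sf ideal}$ and $A_{\sigma}(L_{c})\cong U(\mathfrak{gl}_{1|1})/\langle\phi_{c}\rangle_{\sf ideal}$.

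For part (2) I would use the untwisted (Neveu--Schwarz) sector attached to $\id$. There the odd generators $G^{\pm}$ have half-integer conformal weight $\tfrac{3}{2}$, so they have no zero modes on untwisted modules; consequently $[G^{\pm}]_{\id}=0$ in $A_{\id}(V_{c})$, the algebra is generated by the commuting images $\mathsf{h}:=[\omega]_{\id}$ and $\mathsf{q}:=[J]_{\id}$, and $A_{\id}(V_{c})\cong\mathbb{C}[\mathsf{h},\mathsf{q}]$. The new point is that, although the odd fields are killed in $A_{\id}(V_{c})$, the \emph{even} vector $G_{-1/2}^{+}G_{-1/2}^{-}\mathcal{N}_{p,p'}\in\mathcal{I}$ has an image that need not lie in the ideal generated by $f_{c}:=[\mathcal{N}_{p,p'}]_{\id}$, so it must be recorded as a second generator $g_{c}:=[G_{-1/2}^{+}G_{-1/2}^{-}\mathcal{N}_{p,p'}]_{\id}$. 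A parity count, using $(G_{-1/2}^{\pm})^{2}=0$ and the fact that all other descendants reduce modulo $O(V_{c})$ to multiplication by $\mathsf{h}$ and $\mathsf{q}$, shows that $f_{c}$ and $g_{c}$ exhaust the generators of $A_{\id}(\mathcal{I})$, giving $A_{\id}(L_{c})\cong\mathbb{C}[\mathsf{h},\mathsf{q}]/\langle f_{c},g_{c}\rangle_{\sf ideal}$.

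I expect two main obstacles. The first is to justify rigorously, in the twisted superalgebra setting, both the identity $A_{\tau}(V_{c}/\mathcal{I})\cong A_{\tau}(V_{c})/A_{\tau}(\mathcal{I})$ and the claim that the short list of descendants above really generates $A_{\tau}(\mathcal{I})$; this hinges on the precise description of the $\tau$-twisted $O$-subspace and on the single-generation of $\mathcal{I}$ furnished by Theorems \ref{Main1} and \ref{Main2}. The second is to make $\phi_{c}$, $f_{c}$ and $g_{c}$ explicit as elements of $U(\mathfrak{gl}_{1|1})$ and of $\mathbb{C}[\mathsf{h},\mathsf{q}]$: this requires a usable closed form for the $\mathcal{N}=2$ singular vector $\mathcal{N}_{p,p'}$, and here the exact coset functor of \cite{sato2016equivalences} is the tool of choice, as it identifies $\mathcal{N}_{p,p'}$ with a spectral-flow twist of an explicit $\widehat{\mathfrak{sl}}_{2}$ singular vector whose expression is known.
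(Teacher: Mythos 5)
Your proposal follows the same route as the paper: identify $A_{\sigma}(V_{c})\cong U(\mathfrak{gl}_{1|1})$ and $A_{\id}(V_{c})\cong\mathbb{C}[\mathsf{h},\mathsf{q}]$ explicitly, invoke the uniqueness of the even singular generator $\mathcal{N}_{p,p'}$ of the maximal ideal $I_{c}$ (Theorem \ref{Uniqueness}/Example \ref{maxiproper}), and then show the image of $I_{c}$ in each twisted Zhu algebra is the two-sided ideal generated by $[\mathcal{N}_{p,p'}]$ (resp.\ by $[\mathcal{N}_{p,p'}]$ and $[G^{+}_{-1/2}G^{-}_{-1/2}\mathcal{N}_{p,p'}]$) via reduction of negative-mode descendants using a super analog of Zhu's Lemma 2.1.2 and induction on the PBW filtration of $U((\mathfrak{ns}_{2})_{-})$. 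The two obstacles you flag are precisely the points the paper's proof addresses by that PBW induction, so the roadmap is sound; the only slip is your reference to ``$G_{0}^{\pm}$-, $L_{0}$-, $J_{0}$-descendants'' of $\mathcal{N}_{p,p'}$, where you mean the strictly negative modes that generate $I_{c}=U((\mathfrak{ns}_{2})_{-})\mathcal{N}_{p,p'}$, rather than zero modes, which do not act on the untwisted vacuum module.
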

Note that the latter has been conjectured by W.\,Eholzer and M.\,Gaberdiel in \cite[\S3]{eholzer1997unitarity}. See Remark \ref{EG1} and Remark \ref{EG2} for more detail.

Second, in the case of $c=c_{3,2}=-1$, 
we compute the Frenkel--Zhu bimodule structure associated with a certain simple highest weight module.
We should mention that the simple vertex operator superalgebra $L_{c}$ is not $C_{2}$-cofinite if $p'\neq1$ by \cite[Corollary 2.3]{sato2017modular} (see also Corollary \ref{finiteness} in this paper).
As a consequence of the computation, we obtain a non-trivial upper bound for conjectural fusion rules in \cite[Example 5.2]{sato2017modular}. See Remark \ref{conjFR} for detail.


\medskip
{\bf Acknowledgments: }
The second author would like to thank
Naoki Genra and Yoshiyuki Koga
for valuable comments.
The first author was supported by a Grant-in-Aid for JSPS Fellows (Grant No.\,17J09658).

\section{BGG type resolution for simple $\widehat{\mf{sl}}_{2}$-modules}

In this section we give a brief review on the existence of several BGG type resolutions for 
certain simple weight $\widehat{\mf{sl}}_{2}$-modules.
At the end of this section, we give a new resolution for certain relaxed highest weight
$\widehat{\mf{sl}}_{2}$-modules in terms of relaxed Verma modules (see \S\ref{RVM} for the definition).

\subsection{Notations}
We fix a pair of coprime integers 
$(p,p')\in\mathbb{Z}_{\geq2}\times\mathbb{Z}_{\geq1}$
and set $a:=\frac{\,p'}{p}$.
Unless otherwise specified, the letter $k$ stands for the Kac--Wakimoto admissible level
$-2+a^{-1}$ of the affine Lie algebra $\widehat{\mf{sl}}_{2}=\mf{sl}_{2}\otimes\mathbb{C}[t^{\pm1}]\oplus\mathbb{C}K$.
We denote the standard $\mf{sl}_{2}$-triple by $\{E,F,H\}$.
We put the following two finite sets:
\begin{align*}
&\mathcal{I}_{\sf KW}:=\big\{(m,n)\in\mathbb{Z}^{2}\,\big|\,1\leq m\leq p-1,\,0\leq n\leq p'-1\big\},\\
&\mathcal{I}_{\sf BPZ}:=\big\{(m,n)\in\mathcal{I}_{\sf KW}\,\big|\,n\neq0,\,p'm+pn\leq pp'\big\}.
\end{align*}

For $j\in\mathbb{C}$,
the Verma module $M_{j,k}$ of $\widehat{\mf{sl}}_{2}$
is the $\widehat{\mf{sl}}_{2}$-module
which is freely generated by
a vector $\ket{j,k}$ subject to the relations
$$X_{n}\ket{j,k}=E_{0}\ket{j,k}=0,\ H_{0}\ket{j,k}=2j\ket{j,k},\ K\ket{j,k}=k\ket{j,k}$$
for any $X\in\mf{sl}_{2}$ and $n\in\mathbb{Z}_{>0}$, where $X_{n}:=X\otimes t^{n}$.
It is clear that we have
$$L_{0}\ket{j,k}=\Delta(j)\ket{j,k}\ \ \left(\Delta(j):=\frac{j(j+1)}{k+2}\right),$$ where the operator $L_{0}$ is given by the Sugawara construction.

Finally we set
$$M(m,n):=M_{j_{m,n},k}\ \ 
\left(j_{m,n}:=\frac{m-1}{2}-\frac{n}{2a}\right)$$
for $m,n\in\mathbb{Z}$.
Denote by $L(m,n)$ the simple quotient
of $M(m,n)$.

\subsection{Parabolic BGG Resolution}

We first recall the following BGG resolution 
of the Kac--Wakimoto admissible highest weight $\widehat{\mf{sl}}_{2}$-module $L(r,s)$ 
for $(r,s)\in\mathcal{I}_{\sf KW}$ constructed by F. Malikov.

\begin{theom}\cite[Theorem A]{malikov1991verma}\label{MalikovBGG}
We set $M_{n}:=M(n)\oplus M(-n)$ for $n\in\mathbb{Z}_{>0}$, where
$M(2m):=M(2pm+r,s)$ and $M(2m-1):=M(2pm-r,s)$
for $m\in\mathbb{Z}$.
Then there exists an exact sequence
\begin{equation}\label{BGG}
\cdots
\rightarrow M_{n}
\rightarrow \cdots
\rightarrow M_{2}
\rightarrow M_{1}
\rightarrow M(r,s)
\rightarrow L(r,s)
\rightarrow 0.
\end{equation}
\end{theom}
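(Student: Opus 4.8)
The plan is to realise \eqref{BGG} as the Bernstein--Gelfand--Gelfand complex attached to the block of the category of $\widehat{\mf{sl}}_{2}$-modules containing $M(r,s)$, whose combinatorics is governed by the infinite dihedral group, and then to prove exactness by a formal‑character count which is subsequently upgraded to honest exactness. First I would analyse the structure of the relevant Verma modules. Using the Kac--Kazhdan determinant formula --- equivalently the Malikov--Feigin--Fuks explicit singular‑vector construction --- one determines the Verma modules ``linked'' to $M(r,s)$ for $(r,s)\in\mathcal{I}_{\sf KW}$: the integral Weyl group attached to the (affine) highest weight $\lambda$ of $M(r,s)$ is the infinite dihedral group $D_{\infty}$ generated by two affine reflections; using $1\le r\le p-1$ and $\gcd(p,p')=1$ one checks that $\lambda$ is regular (lies on no wall); and the dot‑orbit of $\lambda$ consists exactly of the highest weights of the modules $M(n)$, $n\in\mathbb{Z}$, in the statement, with $L_{0}$-eigenvalues $\Delta(j_{m,n})$ that are distinct and grow without bound as one moves away from $M(r,s)$. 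Along the way I would record the standard facts that $\dim\Hom_{\widehat{\mf{sl}}_{2}}(M(i),M(j))\le 1$, that any nonzero such map is injective, and --- crucially --- that Verma modules over $\widehat{\mf{sl}}_{2}$ have no subsingular vectors, so that the maximal proper submodule of each $M(n)$ is generated by its singular vectors. Together these show that the embedding diagram of the block has the shape of the Bruhat order of $D_{\infty}$: with $M(0)=M(r,s)$ at the apex, $M(n)$ and $M(-n)$ are the two Verma modules at Bruhat distance $n$, and for $n\ge 1$ each of $M(\pm(n+1))$ embeds into each of $M(\pm n)$ (and into $M(r,s)$ when $n=0$). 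This structural input is the technical content furnished by \cite{malikov1991verma}.

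\textbf{Construction of the complex.} With $M_{0}:=M(r,s)$ and $M_{n}:=M(n)\oplus M(-n)$ for $n\ge 1$, choose a nonzero embedding for each covering pair in the diagram above and fix signs by the usual inductive procedure; this defines differentials $d_{n}\colon M_{n}\to M_{n-1}$ (a $2\times 2$ matrix of embeddings for $n\ge 2$, a row for $n=1$) with $d_{n-1}\circ d_{n}=0$, using only that each square of the diagram commutes up to a nonzero scalar. Take the augmentation $M_{0}\to L(r,s)$ to be the canonical projection. Its kernel is $d_{1}(M_{1})$, the sum of the two singular‑vector submodules of $M(r,s)$, which by the no‑subsingular‑vectors fact coincides with the maximal proper submodule; hence the complex is exact at $M_{0}$ and at $L(r,s)$.

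\textbf{Exactness and the main obstacle.} It remains to prove $d_{n+1}(M_{n+1})=\Ker d_{n}$ for $n\ge 1$. I would first verify the identity of formal characters $\sum_{n\ge 0}(-1)^{n}\ch M_{n}=\ch L(r,s)$, which follows from the Kac--Wakimoto character formula for admissible highest weights together with the $D_{\infty}$-expansion $\ch L(r,s)=\sum_{w\in D_{\infty}}(-1)^{\ell(w)}\ch M(w\cdot\lambda)$. To promote this numerical identity to genuine exactness I would run the Rocha-Caridi--Wallach / Jantzen‑filtration argument: the homology $H_{n}$ of the complex is a subquotient of $M_{n}$, so its possible highest weights lie among the highest weights of the length‑$n$ Verma modules; a descending induction on those weights, using the character identity to see that a nonzero $H_{n}$ would contribute an uncancellable term to the Euler characteristic, forces $H_{n}=0$ for all $n\ge 1$. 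The decisive difficulty lies in this last circle of ideas together with the first step: pinning down the \emph{precise} shape of the embedding diagram --- that every length‑$n$ Verma embeds into \emph{both} length‑$(n-1)$ ones, with no extra embeddings and no subsingular vectors --- and then, because the block is non‑semisimple and the resolution is infinite, setting up the homological bootstrap carefully (typically by first establishing a ``weak'' form of the resolution and then comparing), which is where the real work is.
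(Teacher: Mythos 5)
The paper does not give an independent proof of this statement: it is imported verbatim as \cite[Theorem A]{malikov1991verma}, so there is no internal argument to compare yours against. Judged on its own terms, your outline is the standard route for this kind of result: determine the $D_{\infty}$-block combinatorics via the Kac--Kazhdan/MFF machinery, build the two-by-two complex of Verma embeddings with consistent signs, match the Euler characteristic with the Kac--Wakimoto character formula, and then upgrade the character identity to genuine exactness by a Rocha-Caridi--Wallach-style induction.

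Two points in your sketch deserve caution. First, the assertion that $\widehat{\mf{sl}}_{2}$ Verma modules at admissible level ``have no subsingular vectors'' is not an off-the-shelf input; it is essentially equivalent to already knowing the Jordan--H\"older multiplicities, which is what the resolution is supposed to produce, so invoking it up front risks circularity. The safer route is to control the maximal submodule of each $M(n)$ directly through the Jantzen filtration and Shapovalov determinant rather than by assuming absence of subsingular vectors. Second, ``a nonzero $H_{n}$ would contribute an uncancellable term to the Euler characteristic'' is not by itself a proof for an infinite complex; one must use the $L_{0}$-grading and the fact that only finitely many $M(\pm n)$ meet a given weight space, together with the precise (zero- or one-dimensional) $\Hom$-spaces between the $M(m)$, to run the inductive argument cleanly. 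These are exactly the technical points Malikov's paper supplies, and your sketch correctly flags them as the load-bearing ones.
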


Next, for later use, we consider the parabolic version of the above BGG resolution.
For $m\in\mathbb{Z}_{>0}$,
we define the generalized Verma module $V(m)$
to be the $\widehat{\mf{sl}}_{2}$-module freely generated
by the vector $\ket{m}$ subject to the relations
$$X_{n}\ket{m}=E_{0}\ket{m}=F_{0}^{m}\ket{m}=0,\ H_{0}\ket{m}=(m-1)\ket{m},\ K\ket{m}=k\ket{m}$$
for any $X\in\mf{sl}_{2}$ and $n\in\mathbb{Z}_{>0}$.
Then the following parabolic BGG resolution of $L(r,0)$ for $(r,0)\in\mathcal{I}_{\sf KW}$ is obtained as 
a slight generalization of Theorem \ref{MalikovBGG}.

\begin{theom}\label{MalikovgBGG}
We set 
$V_{2m}:=V(2pm+r)$ and $V_{2m-1}:=V(2pm-r)$
for $m\in\mathbb{Z}_{>0}$.
Then there exists an exact sequence
\begin{equation}\label{gBGG}
\cdots
\rightarrow V_{n}
\rightarrow \cdots
\rightarrow V_{2}
\rightarrow V_{1}
\rightarrow V(r)
\rightarrow L(r,0)
\rightarrow 0.
\end{equation}
\end{theom}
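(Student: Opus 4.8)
The plan is to obtain \eqref{gBGG} from Malikov's Verma-module resolution \eqref{BGG} of $L(r,0)$ (that is, from Theorem~\ref{MalikovBGG} with $s=0$) by collapsing the finite $\mf{sl}_2$-direction, in parallel with the way $V(m)$ is itself a quotient of $M(m,0)$. Set $\mf{p}:=(\mf{sl}_2\otimes\mathbb{C}[t])\oplus\mathbb{C}K$, and for $\lambda\in\mathbb{C}$ write $M^{\mf{sl}_2}_{\lambda}$ for the $\mf{sl}_2$-Verma module of highest weight $\lambda$, viewed as a $\mf{p}$-module with $\mf{sl}_2\otimes t\mathbb{C}[t]$ acting by $0$ and $K$ by $k$. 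Since $U(\widehat{\mf{sl}}_2)$ is free over $U(\mf{p})$, the functor $\Ind_{\mf{p}}^{\widehat{\mf{sl}}_2}$ is exact, and one has $\Ind_{\mf{p}}^{\widehat{\mf{sl}}_2}M^{\mf{sl}_2}_{2j}\cong M_{j,k}$ as well as $\Ind_{\mf{p}}^{\widehat{\mf{sl}}_2}L_{m}\cong V(m)$, where $L_m$ denotes the $m$-dimensional simple $\mf{sl}_2$-module. Applying $\Ind_{\mf{p}}^{\widehat{\mf{sl}}_2}$ to the rank-one BGG sequence $0\to M^{\mf{sl}_2}_{-m-1}\to M^{\mf{sl}_2}_{m-1}\to L_m\to 0$ therefore yields, for every $m\in\mathbb{Z}_{>0}$, a short exact sequence
\[
0\longrightarrow M(-m,0)\longrightarrow M(m,0)\longrightarrow V(m)\longrightarrow 0,
\]
whose first arrow is the inclusion of the submodule generated by the singular vector $F_0^{m}\ket{j_{m,0}}$.

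Next I would match the two resolutions. Put $m_0:=r$ and $m_{2l}:=2pl+r$, $m_{2l-1}:=2pl-r$ for $l\in\mathbb{Z}_{>0}$, so that $V_n=V(m_n)$ for $n\geq1$ and $V(r)=V(m_0)$. A direct comparison of the definitions gives $M_0=M(m_0,0)$ and $M_n\cong M(m_n,0)\oplus M(-m_{n-1},0)$ for $n\geq1$; thus every Verma module occurring in \eqref{BGG} is either the ``top'' $M(m_i,0)$ of some $V_i$ or its finite reflection $M(-m_i,0)$, and the inclusion $M(-m_i,0)\hookrightarrow M(m_i,0)$ above is exactly the component of Malikov's differential linking the copy of $M(-m_i,0)$ in homological degree $i+1$ to the copy of $M(m_i,0)$ in degree $i$. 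The differential of \eqref{gBGG} is then produced by the standard BGG device: nonzero $\widehat{\mf{sl}}_2$-homomorphisms $V_n\to V_{n-1}$ and $V_1\to V(r)$ exist (uniquely up to scalar, by the structure of the block), and the scalars can be normalized so that two consecutive maps compose to zero -- the vanishing being forced because the correction terms factor through the submodules $M(-m_i,0)$, which die in the quotients $M(m_i,0)\twoheadrightarrow V(m_i)$.

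Exactness of \eqref{gBGG} then splits into an easy part and the crux. The Euler characteristic is immediately correct: summing the short exact sequences above, the character identity $\ch M(m_i,0)-\ch M(-m_i,0)=\ch V(m_i)$ converts Malikov's alternating-sum formula for $\ch L(r,0)$ coming from \eqref{BGG} into $\ch L(r,0)=\sum_{n\geq 0}(-1)^n\ch V_n$ (with $V_0:=V(r)$), so in particular the homology of \eqref{gBGG} has vanishing Euler characteristic above degree $0$. The remaining point -- that this homology actually vanishes in every positive degree -- is the main obstacle. The difficulty is that \eqref{BGG} is genuinely \emph{not} the total complex of a double complex whose columns are the short exact sequences above: the Hasse diagram of the block is ``$X$''-shaped, every length-$n$ Verma mapping nontrivially to \emph{both} length-$(n-1)$ Vermas, so Malikov's differential mixes the two summands $M(m_i,0)$ and $M(-m_i,0)$ and one cannot simply invoke a spectral sequence. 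I would instead argue as in Malikov's own exactness proof: a bounded-below complex of highest weight $\widehat{\mf{sl}}_2$-modules whose highest conformal weights tend to $+\infty$, with the correct Euler characteristic and whose $0$-th homology is the expected simple module, is acyclic in positive degrees once one bounds, by induction on homological degree and using the known graded dimensions of $L(r,0)$, the dimension of the space of cycles in each $L_0$-eigenspace. (An alternative is to transform \eqref{BGG} into \eqref{gBGG} by a chain homotopy cancelling the injections $M(-m_i,0)\hookrightarrow M(m_i,0)$; since these maps are injective but not split, this is not an elementary Gaussian elimination, and its legitimacy in the homotopy category of complexes of $\widehat{\mf{sl}}_2$-modules has to be justified.) Essentially all of the work lies in this acyclicity statement; the construction of the maps and the character bookkeeping are routine.
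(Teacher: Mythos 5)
Your setup is correct and the bookkeeping checks out: the induced short exact sequences $0\to M(-m,0)\to M(m,0)\to V(m)\to 0$, the identification $M_n\cong M(m_n,0)\oplus M(-m_{n-1},0)$ and the existence and uniqueness (up to scalar) of the maps $V_n\to V_{n-1}$ are all fine. The problem is the acyclicity step, which you correctly flag as the crux but then do not actually close. The argument you sketch -- that a bounded-below complex of highest weight modules with conformal weights tending to $+\infty$, with the correct Euler characteristic, and with the right $H_0$, must be acyclic in positive degrees -- is simply false. In any fixed $(L_0,H_0)$-eigenspace the complex is finite-dimensional, and the Euler characteristic identity only gives $\sum_{n\geq1}(-1)^n\dim H_n=0$ there; it is perfectly possible to have, say, $\dim H_1=\dim H_2\neq0$. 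Knowing $H_0$ does not break this. So the "bounding the cycles by induction on homological degree" step needs a genuine additional input (e.g.\ a vanishing of the relevant $\operatorname{Ext}^1$'s between parabolic Vermas, a Kostant-type computation of $\mf{n}_-$-homology of $L(r,0)$ as in Garland--Lepowsky, or the parabolic Kazhdan--Lusztig machinery adapted to admissible levels), none of which is supplied. Your own "alternative" via chain homotopy is, as you say, also not legitimate because the inclusions $M(-m_i,0)\hookrightarrow M(m_i,0)$ are not split, so Gaussian elimination of complexes does not apply.

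For comparison: the paper itself does not give a proof, asserting Theorem~\ref{MalikovgBGG} as a ``slight generalization'' of Theorem~\ref{MalikovBGG} and citing Garland--Lepowsky \cite{garland1976lie} (and Rocha-Caridi \cite{rocha1982projective}) for the $p'=1$ case. Those references establish the parabolic resolution by computing $\mf{n}_-$-homology of $L$ and showing the weak BGG complex is already a resolution by parabolic Vermas -- a method that does not pass through the full BGG complex at all and so never meets the obstruction you ran into. If you want to stay with the ``collapse the full BGG resolution'' route, you must supply an actual exactness mechanism; the Euler-characteristic argument is not one.
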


\begin{rem}
When $p'=1$, 
the resolutions (\ref{BGG}) and (\ref{gBGG})
for $L(r,0)$ are obtained by H. Garland and J. Lepowsky
in \cite[Theorem 8.6]{garland1976lie} (see also \cite{rocha1982projective}).
\end{rem}

\subsection{Relaxed Verma modules}\label{RVM}

In this subsection we consider $k\in\mathbb{C}$.
Relaxed Verma modules are firstly introduced by \cite{FST98}
in the case of $\widehat{\mf{sl}}_{2}$ (cf.\,\cite{futorny1996irreducible}).
Their quotient modules, which are called relaxed highest weight modules, have been studied in
\cite{ridout2015relaxed}, \cite{arakawa2017weight}, \cite{adamovic2017realizations}, \cite{kawasetsu2018relaxed}.
For the reader's convinience, we specify the precise definition 
of relaxed Verma modules used in this paper (cf. \cite[\S2.1]{ridout2015relaxed}, \cite[\S2]{kawasetsu2018relaxed}).

Let $\mf{g}$ be a simple complex Lie algebra (or a basic classical Lie superalgebra, in general),
$\mf{h}$ be its Cartan subalgebra, and $U_{0}$ be the centralizer of $\mf{h}$ in $U(\mf{g})$.
For a simple finite-dimensional $U_{0}$-module $M$,
we define the action of 
$\widehat{\mf{g}}_{\geq0}:=\mf{g}\otimes \mathbb{C}[t]
\oplus\mathbb{C}K$ on 
the induced $U(\mf{g})$-module
$U(\mf{g})\otimes_{U_{0}} M$
by $X_{n}\mapsto \delta_{n,0}X$ and $K\mapsto k\id$ for $X\in\mf{g}$ and
$n\in\mathbb{Z}_{\geq0}$.
We denote this $\widehat{\mf{g}}_{\geq0}$-module by $\widetilde{M}_{k}$.
Then we call the parabolic induction
$\widehat{M}_{k}:=\Ind_{\widehat{\mf{g}}_{\geq0}}^{\widehat{\mf{g}}}\widetilde{M}_{k}$
the relaxed Verma module induced from $M$.
When the level $k$ is not critical, it is clear that the Virasoro algebra acts on $\widehat{M}_{k}$
via the Sugawara construction
and the operator $L_{0}$ acts diagonally on $\widehat{M}_{k}$.

When $\mf{g}=\mf{sl}_{2}$ and $k\in\mathbb{C}\setminus\{-2\}$, 
the algebra $U_{0}$ is freely generated by $H$ and the quadratic Casimir element and
it is not hard to verify that the definition of relaxed Verma modules can be rephrased as follows
(see \cite[\S3.3.2]{sato2016equivalences} for detail):
\begin{dfn}
For $(h,j)\in\mathbb{C}^{2}$, 
the relaxed Verma module $R_{h,j,k}$
is the $\widehat{\mf{sl}}_{2}$-module freely generated by
a relaxed highest weight vector $\ket{h,j,k}$ subject to the relations
\begin{align*}
&X_{n}\ket{h,j,k}=0,\ L_{0}\ket{h,j,k}=h\ket{h,j,k},\\
&H_{0}\ket{h,j,k}=2j\ket{h,j,k},\ K\ket{h,j,k}=k\ket{h,j,k}
\end{align*} for any $X\in\mf{sl}_{2}$ and
$n\in\mathbb{Z}_{>0}$.
We write $L_{h,j,k}$ for its unique simple quotient module.
\end{dfn}

\subsection{Twisting functor}

In this subsection we recall some known methods 
(see e.g.\,\cite[\S 2.4]{kashiwara1998kazhdanposi}) to construct relaxed Verma modules from ordinary Verma modules.

Let $U_{F}$ be the (right) localization of the non-commutative algebra $U:=U(\widehat{\mf{sl}}_{2})$ with respect to the multiplicative set 
$\{(F_{0})^{n}\,|\,n\in\mathbb{Z}_{\geq0}\}$ in $U$.
For $j\in\mathbb{C}$, we define the functor ${\sf F}^{j}$ by
$${\sf F}^{j}:=\Res^{U_{F}}_{U}\circ\Ad(F^{j}_{0})^{*}\circ\Ind_{U}^{U_{F}}\colon 
U\text{\sf-mod}\to U\text{\sf-mod},$$
where $\Ad(F^{j}_{0})$ is the algebra automorphism of $U_{F}$
defined by
$$\Ad(F^{j}_{0})(u):=\sum_{\ell\geq0}\binom{j}{\ell}\ad(F_{0})^{\ell}(u)F_{0}^{-\ell}\in U_{F}$$
for $u\in U_{F}$.
Since the functor ${\sf F}^{j}$ is the composition of three exact functors, it is exact.
For an $\widehat{\mf{sl}}_{2}$-module $M$, we denote each element $v$ 
of the underlying space $U_{F}\otimes_{U}M$ of the twisted module
${\sf F}^{j}M:={\sf F}^{j}(M)$ by ${\sf F}^{j}v$.

\begin{lem}\label{twisted}
Let $j_{0},j\in\mathbb{C}$. 
Then there exists a unique $\widehat{\mf{sl}}_{2}$-module homomorphism
\begin{equation}\label{twistedisom}
R_{\Delta(j_{0}),j,k}\to {\sf F}^{-j+j_{0}}M_{j_{0},k};\ 
\ket{\Delta(j_{0}),j,k}\mapsto {\sf F}^{-j+j_{0}}\ket{j_{0},k}.
\end{equation}
Moreover it is an isomorphism if and only if $j\notin\{j_{0},-j_{0}-1\}+\mathbb{Z}_{\leq0}$.
\end{lem}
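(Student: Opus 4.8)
The plan is to produce the homomorphism from the universal (freeness) property of the relaxed Verma module, then to match bigraded characters, and finally to decide bijectivity by a computation in the lowest conformal–weight subspace. Throughout write $\mu:=-j+j_{0}$, $v_{0}:={\sf F}^{\mu}\ket{j_{0},k}\in{\sf F}^{\mu}M_{j_{0},k}$, and recall that $\ad(F_{0})$ is nilpotent of order $3$ on $\widehat{\mf{sl}}_{2}$, hence locally nilpotent on $U=U(\widehat{\mf{sl}}_{2})$, so each $\Ad(F_{0}^{\mu})(u)$ is a finite sum in $U_{F}$. \emph{Existence and uniqueness.} Since $R_{\Delta(j_{0}),j,k}$ is generated over $\widehat{\mf{sl}}_{2}$ by $\ket{\Delta(j_{0}),j,k}$, a homomorphism out of it is determined by the image of this vector, which gives uniqueness; for existence it suffices to check that $v_{0}$ satisfies $X_{n}v_{0}=0$ ($X\in\mf{sl}_{2}$, $n>0$), $H_{0}v_{0}=2jv_{0}$, $L_{0}v_{0}=\Delta(j_{0})v_{0}$ and $Kv_{0}=kv_{0}$. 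This is a direct computation: expanding $\Ad(F_{0}^{\mu})(X_{n})$, $\Ad(F_{0}^{\mu})(H_{0})$, etc.\ and applying the result to $1\otimes\ket{j_{0},k}$, every correction term reaches $\ket{j_{0},k}$ through a strictly positive mode and so vanishes, while the zero-mode part reproduces the asserted eigenvalues; in particular $\Ad(F_{0}^{\mu})(H_{0})=H_{0}+2\mu$ (up to the normalization of $\Ad(F_{0}^{\mu})^{*}$) is what turns $2j_{0}$ into $2j$.

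\emph{Characters.} One shows that the $(L_{0},H_{0})$-bigraded characters of $R_{\Delta(j_{0}),j,k}$ and of ${\sf F}^{\mu}M_{j_{0},k}$ coincide. By PBW and the relations $X_{n}\ket{}=0$, we have $R_{\Delta(j_{0}),j,k}\cong U(\mf{sl}_{2}\otimes t^{-1}\mathbb{C}[t^{-1}])\otimes\overline{R}$ as a vector space, where $\overline{R}=U(\mf{sl}_{2})/(H-2j,\,\Omega-\omega)$, with $\Omega$ the Casimir and $\omega$ the scalar forced by $L_{0}=\Delta(j_{0})$; a short argument (the relation $\Omega=\omega$ relates the monomials $F^{a}E^{a+n}\ket{}$ of a fixed weight by scalars and never kills the extreme one) shows $\overline{R}$ has one-dimensional weight spaces in every weight of $2j+2\mathbb{Z}$. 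On the other side ${\sf F}^{\mu}M_{j_{0},k}=U_{F}\otimes_{U}M_{j_{0},k}$; since ${\sf F}^{\mu}$ only inverts $F_{0}$ and shifts the $H_{0}$-grading by $2\mu$, the set $\{\,(\text{negative-mode monomial})\cdot F_{0}^{a}\otimes\ket{j_{0},k}\mid a\in\mathbb{Z}\,\}$ is a basis, and its character is the same as that of $R_{\Delta(j_{0}),j,k}$. As all bigraded components are finite-dimensional, the map of the lemma is an isomorphism if and only if it is surjective.

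\emph{Surjectivity.} Since ${\sf F}^{\mu}$ is exact and preserves the conformal grading, ${\sf F}^{\mu}M_{j_{0},k}$ is generated over $\widehat{\mf{sl}}_{2}$ by its lowest $L_{0}$-eigenspace $D$ (the $L_{0}=\Delta(j_{0})$ part, which carries an $\mf{sl}_{2}$-action through the zero modes and contains $v_{0}$); hence the cokernel is generated in degree $0$, and the map is surjective iff $v_{0}$ generates $D$ over $\mf{sl}_{2}$. On $D$ the operator $F_{0}$ acts invertibly, $F_{0}^{a}v_{0}=1\otimes F_{0}^{a}\ket{j_{0},k}$ for $a\ge0$, the weight spaces are one-dimensional (weights $2j+2\mathbb{Z}$), and using $[E_{0},F_{0}^{-m}]=-mF_{0}^{-m-1}(H_{0}+m+1)$ one computes
\[
E_{0}\cdot\!\left(F_{0}^{-m}v_{0}\right)=C_{m}\,F_{0}^{-m-1}v_{0},\qquad C_{m}=(\mu-m)(2j_{0}+1-\mu+m)\qquad(m\in\mathbb{Z}).
\]
If $C_{m}\ne0$ for all $m\ge0$, then $E_{0}^{m+1}v_{0}=(C_{0}\cdots C_{m})F_{0}^{-m-1}v_{0}$ shows that $v_{0}$ meets every weight space of $D$, hence generates it; conversely, if $C_{m_{0}}=0$ with $m_{0}\ge0$ minimal, then every element of $U(\mf{sl}_{2})v_{0}$ has zero component in the one-dimensional weight space of weight $2j+2(m_{0}+1)$, so $v_{0}$ fails to generate. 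Finally, $C_{m}=0$ for some $m\ge0$ is equivalent to $\mu\in\mathbb{Z}_{\ge0}$ or $\mu-2j_{0}-1\in\mathbb{Z}_{\ge0}$, i.e.\ to $j\in\{j_{0},-j_{0}-1\}+\mathbb{Z}_{\le0}$, which is exactly the stated criterion.

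The main obstacle is the last step: one must identify precisely the $\mf{sl}_{2}$-submodule of $D$ generated by the twisted highest-weight vector, and it is the vanishing pattern of the coefficients $C_{m}$ that singles out the two progressions $\{j_{0}\}+\mathbb{Z}_{\le0}$ and $\{-j_{0}-1\}+\mathbb{Z}_{\le0}$. A subsidiary point that needs care is the sign/normalization convention in $\Ad(F_{0}^{\bullet})^{*}$, which is what decides that $H_{0}v_{0}$ comes out equal to $2jv_{0}$ (rather than, say, $2(2j_{0}-j)v_{0}$); once this is fixed, the remaining ingredients are the routine relation-check, the PBW character count, and the reduction of surjectivity to degree $0$.
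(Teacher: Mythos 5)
Your proposal is correct and, at its core, follows the same route as the paper: existence and uniqueness come from checking that $v_{0}={\sf F}^{\mu}\ket{j_{0},k}$ satisfies the defining relations of the relaxed highest weight vector, and bijectivity is reduced (via the free $U(\mf{sl}_{2}\otimes t^{-1}\mathbb{C}[t^{-1}])$-module structure with basis $\{F_{0}^{n}v_{0}\}$) to whether $v_{0}$ generates the lowest $L_{0}$-eigenspace over $\mf{sl}_{2}$, decided by the coefficients $C_{m}=(\mu-m)(2j_{0}+1-\mu+m)$, which coincide with the paper's $-(m+j-j_{0})(m+j+j_{0}+1)$ after substituting $\mu=j_{0}-j$. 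The one place you genuinely supplement the paper is the interpolated bigraded-character comparison to obtain ``isomorphism iff surjective''; the paper instead reads injectivity directly off the same freeness that yields surjectivity, so your extra step is logically sound but not necessary. You were also right to flag the normalization of $\Ad(F_{0}^{\bullet})^{*}$: taking the twisted action to be $X\cdot{\sf F}^{\mu}v={\sf F}^{\mu}\bigl(\Ad(F_{0}^{-\mu})(X)v\bigr)$ is what makes $H_{0}v_{0}=2jv_{0}$ and reproduces the paper's coefficient, and your formula is consistent with that choice.
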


\begin{proof}
Since the vector ${\sf F}^{-j+j_{0}}\ket{j_{0},k}$
satisfies the same annihilation relation as that of $\ket{\Delta(j_{0}),j,k}$,
the $\widehat{\mf{sl}}_{2}$-module homomorphism (\ref{twistedisom}) uniquely exists.
By a direct computation, we have
$$E_{0}{\sf F}^{-j+j_{0}}F_{0}^{-m}\ket{j_{0},k}
=-(m+j-j_{0})(m+j+j_{0}+1){\sf F}^{-j+j_{0}}F_{0}^{-m-1}\ket{j_{0},k}$$
for $m\in\mathbb{Z}_{>0}$.
Since the image of (\ref{twistedisom}) is 
given by $U(\mf{sl}_{2}\otimes\mathbb{C}[t^{-1}]){\sf F}^{-j+j_{0}}\ket{j_{0},k}$
and the set
$\big\{{\sf F}^{-j+j_{0}}F_{0}^{n}\ket{j_{0},k}\big|\,n\in\mathbb{Z}\big\}$
forms a free $U(\mf{sl}_{2}\otimes\mathbb{C}[t^{-1}]t^{-1})$-basis of ${\sf F}^{-j+j_{0}}M_{j_{0},k}$,
the mapping (\ref{twistedisom}) is bijective if and only if
$j\notin\{j_{0},-j_{0}-1\}+\mathbb{Z}_{\leq0}$.
\end{proof}

The following lemma is used in the next subsection.

\begin{lem}\label{homog}
Assume that $j\notin\{j_{0},-j_{0}-1\}+\mathbb{Z}$.
Then, for any $n\in\mathbb{Z}$, the module ${\sf F}^{-(j+n)+j_{0}}M_{j_{0},k}$
is isomorphic to
$R_{\Delta(j_{0}),j,k}$.
\end{lem}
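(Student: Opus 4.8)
The plan is to reduce Lemma~\ref{homog} to Lemma~\ref{twisted} by showing that the twisting functors ${\sf F}^{a}$ and ${\sf F}^{a'}$ applied to $M_{j_{0},k}$ produce isomorphic modules whenever $a-a'\in\mathbb{Z}$, provided the relevant genericity condition holds. Concretely, set $a:=-(j+n)+j_{0}$ and $a':=-j+j_{0}$, so that $a-a'=-n\in\mathbb{Z}$. The hypothesis $j\notin\{j_{0},-j_{0}-1\}+\mathbb{Z}$ guarantees that $j+n\notin\{j_{0},-j_{0}-1\}+\mathbb{Z}_{\leq0}$ as well, so by Lemma~\ref{twisted} we already have ${\sf F}^{a'}M_{j_{0},k}\cong R_{\Delta(j_{0}),j,k}$, and it remains to identify ${\sf F}^{a}M_{j_{0},k}$ with ${\sf F}^{a'}M_{j_{0},k}$.

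First I would record the elementary cocycle-type identity for the automorphisms $\Ad(F_{0}^{a})$: since $\Ad(F_{0}^{a})\circ\Ad(F_{0}^{a'})=\Ad(F_{0}^{a+a'})$ on $U_{F}$ (which follows from the Vandermonde identity for the binomial coefficients in the defining formula), and since $\Ad(F_{0}^{m})$ for $m\in\mathbb{Z}$ is just conjugation by the honest invertible element $F_{0}^{m}\in U_{F}$, the functor ${\sf F}^{a}$ and ${\sf F}^{a'}$ differ by the autoequivalence of $U_{F}\text{\sf -mod}$ induced by inner conjugation by $F_{0}^{-n}$. On the underlying space $U_{F}\otimes_{U}M_{j_{0},k}$ this autoequivalence is realised by the map ${\sf F}^{a}v\mapsto {\sf F}^{a'}(F_{0}^{n}v)$ — more precisely one checks that $u\otimes x\mapsto uF_{0}^{n}\otimes x$ is a well-defined $U_{F}$-linear isomorphism intertwining the two twisted actions, because $F_{0}$ acts invertibly after localization. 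The upshot is a canonical isomorphism ${\sf F}^{a}M\cong {\sf F}^{a'}M$ for \emph{every} $\widehat{\mf{sl}}_{2}$-module $M$ and every $a-a'\in\mathbb{Z}$; the genericity hypothesis is not even needed for this step.

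Then the conclusion is immediate: ${\sf F}^{-(j+n)+j_{0}}M_{j_{0},k}\cong{\sf F}^{-j+j_{0}}M_{j_{0},k}\cong R_{\Delta(j_{0}),j,k}$, the last isomorphism by Lemma~\ref{twisted} using $j\notin\{j_{0},-j_{0}-1\}+\mathbb{Z}\supseteq\{j_{0},-j_{0}-1\}+\mathbb{Z}_{\leq0}$. Alternatively, one can bypass the functorial statement entirely and argue by hand: the set $\{{\sf F}^{-(j+n)+j_{0}}F_{0}^{\ell}\ket{j_{0},k}\mid\ell\in\mathbb{Z}\}$ is again a free $U(\mf{sl}_{2}\otimes t^{-1}\mathbb{C}[t^{-1}])$-basis of the twisted module, the $E_{0}$-action on it is given by the same quadratic expression as in the proof of Lemma~\ref{twisted} with $j$ replaced by $j+n$, and one reads off directly that the map $\ket{\Delta(j_{0}),j,k}\mapsto{\sf F}^{-(j+n)+j_{0}}F_{0}^{n}\ket{j_{0},k}$ (note the shift by $F_{0}^{n}$, needed so that the weight matches $2j$ rather than $2(j+n)$) is an isomorphism onto $R_{\Delta(j_{0}),j,k}$ exactly when $j+n\notin\{j_{0},-j_{0}-1\}+\mathbb{Z}_{\leq0}$ and symmetrically $j+n\notin\{j_{0},-j_{0}-1\}+\mathbb{Z}_{\geq0}$ is \emph{not} required because we only need surjectivity onto the cyclic submodule generated by the appropriate vector — here the two-sided condition $j\notin\{j_{0},-j_{0}-1\}+\mathbb{Z}$ does the job.

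\textbf{Main obstacle.} The only delicate point is bookkeeping with the $F_{0}$-shift: because ${\sf F}^{a}$ changes the $H_{0}$-weight of the cyclic vector by $2a$, the two modules ${\sf F}^{a}M_{j_{0},k}$ and ${\sf F}^{a'}M_{j_{0},k}$ have distinguished cyclic vectors of different weights, and the isomorphism must be the one sending $\ket{j_{0},k}$-image to an $F_{0}^{n}$-translate of the other, not the naive identity. One must verify that this translated vector still generates the whole localized module (it does, since $F_{0}$ is invertible there) and that the genericity hypothesis, which is symmetric under $j\leftrightarrow j+n$ precisely because we excluded a full $\mathbb{Z}$-coset rather than a half-line, guarantees freeness on both sides. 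Everything else is the Vandermonde identity and the PBW basis argument already carried out in Lemma~\ref{twisted}.
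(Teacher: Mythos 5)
Your proposal is correct and takes a genuinely different, more functorial route than the paper's own proof. The paper applies Lemma~\ref{twisted} at the shifted parameter $j+n$ to obtain ${\sf F}^{-(j+n)+j_{0}}M_{j_{0},k}\cong R_{\Delta(j_{0}),j+n,k}$, and then checks by a direct calculation that $R_{\Delta(j_{0}),j+n,k}\cong R_{\Delta(j_{0}),j,k}$ under the genericity hypothesis; the integer-periodicity is thus reduced to a periodicity statement about relaxed Verma modules themselves. You instead push the integer shift into the twisting functor: the Vandermonde cocycle identity $\Ad(F_{0}^{a})\circ\Ad(F_{0}^{a'})=\Ad(F_{0}^{a+a'})$ on $U_{F}$ together with the observation that $\Ad(F_{0}^{m})$ is the inner automorphism given by conjugation by $F_{0}^{m}$ for $m\in\mathbb{Z}$ yields a natural isomorphism ${\sf F}^{a}\cong{\sf F}^{a'}$ of functors on $U\text{\sf-mod}$ whenever $a-a'\in\mathbb{Z}$, valid for arbitrary modules and with no genericity assumption, after which a single application of Lemma~\ref{twisted} at the unshifted parameter $j$ finishes the argument. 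The functorial version isolates the integer-periodicity as a module-independent statement about ${\sf F}$ and uses the genericity hypothesis exactly once, in the final step, whereas the paper's reduction invokes it both in the application of Lemma~\ref{twisted} (to $j+n$) and again in the comparison of the two relaxed Verma modules.

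One small slip in the write-up: the explicit intertwiner on $U_{F}\otimes_{U}M$ should be \emph{left} multiplication $w\otimes x\mapsto F_{0}^{n}w\otimes x$, not $w\otimes x\mapsto wF_{0}^{n}\otimes x$. Right multiplication by $F_{0}^{n}\in U_{F}$ does not even descend to $U_{F}\otimes_{U}M$ (one cannot move an element of $U_{F}\setminus U$ across $\otimes_{U}$), whereas left multiplication by $F_{0}^{a'-a}=F_{0}^{n}$ does, and it intertwines the two twisted actions because $\Ad(F_{0}^{a})(u)=F_{0}^{-n}\,\Ad(F_{0}^{a'})(u)\,F_{0}^{n}$. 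This agrees with the other formula ${\sf F}^{a}v\mapsto{\sf F}^{a'}(F_{0}^{n}v)$ that you give and is a typographical point, not a gap in the argument.
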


\begin{proof}
By the previous lemma, it suffices to prove that $R_{\Delta(j_{0}),j+n,k}$ is isomorphic to $R_{\Delta(j_{0}),j,k}$. This follows from direct calculation.
\end{proof}

\subsection{Relaxed BGG Resolution}
Since $-j_{m,n}-1=j_{p-m,p'-n}$, we have by Lemma \ref{twisted}
$$R_{m,n;j}:=R_{\Delta(j_{m,n}),j,k}\simeq {\sf F}^{-j+j_{m,n}}M(m,n)$$
if $j\notin\{j_{m,n},j_{p-m,p'-n}\}+\mathbb{Z}_{\leq0}$.
Then, by applying the exact functor ${\sf F}^{-j+j_{r,s}}$ to the BGG resolution (\ref{BGG}),
we obtain the following BGG type resolution for
the simple quotient $L_{r,s;j}$ of $R_{r,s;j}$:

\begin{prp}
Assume that $(r,s)\in\mathcal{I}_{\sf BPZ}$ and $j\notin\{j_{r,s},j_{p-r,p'-s}\}+\mathbb{Z}$.
We set $R_{n}:=R(n;j)\oplus R(-n;j)$ for $n\in\mathbb{Z}_{>0}$,
where
$R(2m;j):=R_{2pm+r,s;j}$ and $R(2m-1;j):=R_{2pm-r,s;j}$
for $m\in\mathbb{Z}$.
Then there exists an exact sequence
\begin{equation}\label{rBGG}
\cdots
\rightarrow R_{n}
\rightarrow \cdots
\rightarrow R_{2}
\rightarrow R_{1}
\rightarrow R_{r,s;j}
\rightarrow L_{r,s;j}
\rightarrow 0.
\end{equation}
\end{prp}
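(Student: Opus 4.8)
The plan is to deduce this relaxed BGG resolution by applying the exact twisting functor ${\sf F}^{-j+j_{r,s}}$ term-by-term to Malikov's BGG resolution (\ref{BGG}) and then identifying each resulting twisted module with the appropriate relaxed Verma module. First I would invoke exactness: since ${\sf F}^{-j+j_{r,s}}$ is a composition of three exact functors, applying it to the exact complex $\cdots\to M_{n}\to\cdots\to M_{1}\to M(r,s)\to L(r,s)\to 0$ yields an exact complex $\cdots\to {\sf F}^{-j+j_{r,s}}M_{n}\to\cdots\to {\sf F}^{-j+j_{r,s}}M(r,s)\to {\sf F}^{-j+j_{r,s}}L(r,s)\to 0$. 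It remains to match the terms.

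Next I would handle the identification of terms. For the rightmost nontrivial term, ${\sf F}^{-j+j_{r,s}}M(r,s)\simeq R_{r,s;j}$ by Lemma \ref{twisted}, which applies because the hypothesis $j\notin\{j_{r,s},j_{p-r,p'-s}\}+\mathbb{Z}$ certainly implies $j\notin\{j_{r,s},j_{p-r,p'-s}\}+\mathbb{Z}_{\leq0}$ (here one uses $-j_{m,n}-1=j_{p-m,p'-n}$). For the higher terms $M_{n}=M(n)\oplus M(-n)$ with $M(2m)=M(2pm+r,s)$ and $M(2m-1)=M(2pm-r,s)$, the functor commutes with direct sums, so ${\sf F}^{-j+j_{r,s}}M_{n}\cong {\sf F}^{-j+j_{r,s}}M(n)\oplus {\sf F}^{-j+j_{r,s}}M(-n)$. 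The key point is that each $M(\pm n)$ is a Verma module $M_{j_{0},k}$ whose highest weight $j_{0}$ differs from $j_{r,s}$ by an integer: indeed $j_{2pm\pm r,s}-j_{r,s}\in\mathbb{Z}$ since the first coordinate shifts by an even integer $2pm$ (or by $2pm-2r$, also even) while the second coordinate $s$ is fixed, and $j_{m,n}=\frac{m-1}{2}-\frac{n}{2a}$ depends on $m$ only through $\frac{m-1}{2}$. Hence ${\sf F}^{-j+j_{r,s}}M(\pm n)={\sf F}^{-(j+\ell)+j_{0}}M_{j_{0},k}$ for a suitable integer $\ell$ and $j_{0}=j_{2pm\pm r,s}$, and Lemma \ref{homog} — whose hypothesis $j\notin\{j_{0},-j_{0}-1\}+\mathbb{Z}$ is exactly the given condition translated via the integer shift and the symmetry $-j_{0}-1=j_{p-m',p'-n'}$ — identifies this with $R_{\Delta(j_{0}),j,k}=R_{2pm\pm r,s;j}$. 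This gives ${\sf F}^{-j+j_{r,s}}M_{n}\cong R(n;j)\oplus R(-n;j)=R_{n}$ as in the statement.

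Finally I would identify the last term: ${\sf F}^{-j+j_{r,s}}L(r,s)$ is a quotient of ${\sf F}^{-j+j_{r,s}}M(r,s)\simeq R_{r,s;j}$ obtained by killing the image of the arrow from $R_{1}$, hence it is the relevant quotient of $R_{r,s;j}$; one checks that under the stated genericity on $j$ this quotient is precisely the simple module $L_{r,s;j}$ (the simple quotient is characterized by exactness of the original sequence, since the twisting functor sends the maximal submodule of $M(r,s)$ — resolved by the $M_{n}$ — to the maximal submodule of $R_{r,s;j}$, using that ${\sf F}^{-j+j_{r,s}}$ is exact and faithful on the relevant category). Assembling the three identifications into the exact complex produces (\ref{rBGG}). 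I expect the main obstacle to be bookkeeping: verifying carefully that all the integer shifts $j_{2pm\pm r,s}-j_{r,s}$ lie in $\mathbb{Z}$ and that the genericity hypothesis $j\notin\{j_{r,s},j_{p-r,p'-s}\}+\mathbb{Z}$ survives each translation so that Lemma \ref{homog} genuinely applies to every term, and confirming that twisting really sends $L(r,s)$ to $L_{r,s;j}$ rather than to some larger quotient.
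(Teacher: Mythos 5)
Your proof matches the paper's up to the final step: you apply the exact twisting functor term by term, identify each ${\sf F}^{-j+j_{r,s}}M_{n}$ with $R_{n}$ via Lemma \ref{homog} (the integer-shift and $-j_{0}-1=j_{p-m,p'-n}$ bookkeeping you describe is correct), and deduce an exact sequence terminating in ${\sf F}^{-j+j_{r,s}}L(r,s)$. The genuine gap is the very step you flag in passing: showing ${\sf F}^{-j+j_{r,s}}L(r,s)\simeq L_{r,s;j}$. Your justification --- that ${\sf F}^{-j+j_{r,s}}$ ``is exact and faithful on the relevant category'' and hence sends the maximal submodule of $M(r,s)$ to the maximal submodule of $R_{r,s;j}$ --- does not work. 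An exact, faithful functor reflects zero objects, but it need not preserve simplicity: if $N\subsetneq M$ is maximal with $M/N$ simple, exactness gives ${\sf F}(M)/{\sf F}(N)\cong{\sf F}(M/N)$, and there is no reason for ${\sf F}(M/N)$ to be simple, hence no reason for ${\sf F}(N)$ to be maximal in ${\sf F}(M)$. Indeed, outside the genericity condition on $j$ the twisting functor precisely fails to preserve simplicity (producing the staggered modules $\mathcal{E}^{\pm}$), so any argument must use the hypothesis $j\notin\{j_{r,s},j_{p-r,p'-s}\}+\mathbb{Z}$ in a substantive way, and a bare appeal to faithfulness does not.

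The paper closes this gap by a character computation rather than a structural claim about the functor. From the exact sequence one reads off $\ch{\sf F}^{-j+j_{r,s}}L(r,s)=\sum_{n\in\mathbb{Z}}(-1)^{n}\ch R_{n}$; this alternating sum is then matched with the Creutzig--Ridout character formula for $L_{r,s;j}$, which is known to hold by independent proofs (Adamovi\'{c} in special cases, Kawasetsu--Ridout in general). Since $L_{r,s;j}$ is a quotient of ${\sf F}^{-j+j_{r,s}}L(r,s)$ (both being quotients of $R_{r,s;j}$, with $L_{r,s;j}$ the simple one) and their characters agree, they must coincide. To repair your proof you would need to import this external character-formula input, or else prove directly that the twisting functor preserves simplicity under the stated genericity on $j$ --- a nontrivial statement not established in the paper.
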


\begin{proof}
Since ${\sf F}^{-j+j_{r,s}}M_{n}$ is isomorphic to $R_{n}$ by Lemma \ref{homog},
there exists an exact sequence
$$\cdots
\rightarrow R_{n}
\rightarrow \cdots
\rightarrow R_{2}
\rightarrow R_{1}
\rightarrow R_{r,s;j}
\rightarrow {\sf F}^{-j+j_{r,s}}L(r,s)
\rightarrow 0.$$
Therefore the formal character of ${\sf F}^{-j+j_{r,s}}L(r,s)$
is given by
\begin{equation}\label{CRchar}
\ch {\sf F}^{-j+j_{r,s}}L(r,s)=\sum_{n\in\mathbb{Z}}(-1)^{n}\ch R_{n},
\end{equation}
where $R_{0}:=R_{r,s;j}$.
It is clear that the formal series (\ref{CRchar}) coincides with the character of $L_{r,s;j}$ given by T. Creutzig and D. Ridout in 
\cite[Corollary 5]{creutzig2013modular}. 
Since Creutzig--Ridout's character formula
is proved in \cite[Proposition 5.4]{adamovic2017realizations} for some special cases and in \cite[Theorem 5.2]{kawasetsu2018relaxed} for general cases\footnote{The character formula in \cite[Corollary 5]{creutzig2013modular} also follows from \cite[Theorem 3.1]{sato2017modular} together with \cite[Proposition 7.1 (4) and Theorem 7.8]{sato2016equivalences}.}, this completes the proof.
\end{proof}

\section{BGG type resolution for simple $\mf{ns}_{2}$-modules}\label{relaxed}

In this section we prove the existence of BGG type resolution for 
certain simple highest weight modules over the $\mathcal{N}=2$ superconformal algebra.
As we mentioned in \S\ref{mainresults}, we use the functor 
$\Omega_{j}^{+}\colon\mathcal{C}_{\sf af}\to\mathcal{C}_{\sf sc}$
for $j\in\mathbb{C}$ defined in \cite[\S4.1]{sato2016equivalences},
where $\mathcal{C}_{\sf af}$ and $\mathcal{C}_{\sf sc}$
are certain full subcategories of $\widehat{\mf{sl}}_{2}$-modules of level $k$
and $\mf{ns}_{2}$-modules of central charge $c$, respectively.
Our discussion below is essentially based on the exactness of the functor $\Omega_{j}^{+}$
(see \cite[Theorem 4.4]{sato2016equivalences}) and the spectral flow equivariance property
(see \cite[Corollary 6.3]{sato2016equivalences}).

\subsection{Notations}

The Neveu-Schwarz sector of the $\mathcal{N}=2$ 
superconformal algebra is the Lie superalgebra
\begin{equation*}
\mf{ns}_{2}=
\bigoplus_{n\in\mathbb{Z}}\mathbb{C}
L_{n}\oplus
\bigoplus_{n\in\mathbb{Z}}\mathbb{C}
J_{n}\oplus
\bigoplus_{r\in\mathbb{Z}+\frac{1}{2}}\mathbb{C}
G^{+}_{r}\oplus
\bigoplus_{r\in\mathbb{Z}+\frac{1}{2}}\mathbb{C}
G^{-}_{r}\oplus
\mathbb{C}C
\end{equation*}
whose $\mathbb{Z}_{2}$-grading is given by 
\begin{equation*}
(\mf{ns}_{2})^{\bar{0}}=
\bigoplus_{n\in\mathbb{Z}}\mathbb{C}
L_{n}\oplus
\bigoplus_{n\in\mathbb{Z}}\mathbb{C}
J_{n}\oplus\mathbb{C}C,\ \ 
(\mf{ns}_{2})^{\bar{1}}=
\bigoplus_{r\in\mathbb{Z}+\frac{1}{2}}\mathbb{C}
G^{+}_{r}\oplus
\bigoplus_{r\in\mathbb{Z}+\frac{1}{2}}\mathbb{C}
G^{-}_{r}
\end{equation*}
with the following 
(anti-)commutation relations:
\begin{flalign*}
&
[L_{n},L_{m}]=(n-m)L_{n+m}+\frac{1}{12}(n^{3}-n)C\delta_{n+m,0},\\ 
&
[L_{n},J_{m}]=-mJ_{n+m}, \ 
[L_{n},G^{\pm}_{r}]=\left(\frac{n}{2}-r\right)G^{\pm}_{n+r}, \\
&
[J_{n},J_{m}]=\frac{n}{3}C\delta_{n+m,0},\ \ 
[J_{n},G^{\pm}_{r}]=\pm G^{\pm}_{n+r}, \\
&
[G^{+}_{r},G^{-}_{s}]=
2L_{r+s}+(r-s)J_{r+s}+
\frac{1}{3}\left(r^{2}-\frac{1}{4}\right)C\delta_{r+s,0}, \\
&
[G^{+}_{r},G^{+}_{s}]=
[\,G^{-}_{r},G^{-}_{s}]=0,\ \ 
[\mf{ns}_{2},C]=\{0\},
\end{flalign*}
for $n,m\in\mathbb{Z}$ and $r,s\in\mathbb{Z}+\frac{1}{2}$.
Define a triangular decomposition $\mf{ns}_{2}=
(\mf{ns}_{2})_{+}\oplus
(\mf{ns}_{2})_{0}\oplus
(\mf{ns}_{2})_{-}$
by
\begin{equation*}
\begin{array}{lcl}
(\mf{ns}_{2})_{+} & := &
\displaystyle
\bigoplus_{n>0}\mathbb{C}L_{n}\oplus
\bigoplus_{n>0}\mathbb{C}J_{n}\oplus
\bigoplus_{r>0}\mathbb{C}G^{+}_{r}\oplus
\bigoplus_{r>0}\mathbb{C}G^{-}_{r},\\
(\mf{ns}_{2})_{-} & := &
\displaystyle
\bigoplus_{n<0}\mathbb{C}L_{n}\oplus
\bigoplus_{n<0}\mathbb{C}J_{n}\oplus
\bigoplus_{r<0}\mathbb{C}G^{+}_{r}\oplus
\bigoplus_{r<0}\mathbb{C}G^{-}_{r}
,\\
(\mf{ns}_{2})_{0} & := &
\displaystyle
\mathbb{C}L_{0}\oplus
\mathbb{C}J_{0}
\oplus\mathbb{C}C.
\end{array}
\end{equation*}

For $(h,j,c)\in\mathbb{C}^{3}$, 
the Verma module $\mathcal{M}_{h,j,c}$ of $\mf{ns}_{2}$
is the $\mathbb{Z}_{2}$-graded $\mf{ns}_{2}$-module freely generated by
an even vector $\ket{h,j,c}^{\sf sc}$ subject to the relations
\begin{align*}
&(\mf{ns}_{2})_{+}\ket{h,j,c}^{\sf sc}:=\{0\},\ L_{0}\ket{h,j,c}^{\sf sc}:=h\ket{h,j,c}^{\sf sc},\\ 
&J_{0}\ket{h,j,c}^{\sf sc}:=j\ket{h,j,c}^{\sf sc},\ C\ket{h,j,c}^{\sf sc}:=c\ket{h,j,c}^{\sf sc}.
\end{align*}
Similarly, the chiral Verma module $\mathcal{M}_{j,c}^{+}$
is the $\mathbb{Z}_{2}$-graded $\mf{ns}_{2}$-module freely generated by
an even vector $\ket{j,c}^{\sf sc}$ subject to the relations
\begin{align*}
&(\mf{ns}_{2})_{+}\ket{j,c}^{\sf sc}:=\{0\},\ G_{-\frac{1}{2}}^{+}\ket{j,c}^{\sf sc}:=0,\\
&L_{0}\ket{j,c}^{\sf sc}:=\frac{j}{2}\ket{j,c}^{\sf sc},\ 
J_{0}\ket{j,c}^{\sf sc}:=j\ket{j,c}^{\sf sc},\ C\ket{j,c}^{\sf sc}:=c\ket{j,c}^{\sf sc}.
\end{align*}

\subsection{BGG type resolution for atypical modules}

For $m,n\in\mathbb{Z}$, we set
$$\mathcal{M}^{+}(m,n):=\mathcal{M}^{+}_{2aj_{m,n},3(1-2a)}$$
and denote its simple quotient by $\mathcal{L}(m,n)$.
For the sake of completeness, 
we recall the following BGG type resolution of $\mathcal{L}(r,s)$ for $(r,s)\in\mathcal{I}_{\sf KW}$, which is firstly given 
by B. Feigin, A. Semikhatov, V. Sirota, and I. Tipunin in \cite[Theorem 3.1]{FSST99}
(see \cite[Theorem 7.11]{sato2016equivalences} for the proof).

\begin{theom}[{\cite{FSST99}, \cite{sato2016equivalences}}]\label{FsstBGG}
We set $\mathcal{M}^{+}_{n}:=\mathcal{M}^{+}(n)\oplus \mathcal{M}^{+}(-n)$ for $n>0$,
where
$\mathcal{M}^{+}(2m):=\mathcal{M}^{+}(2pm+r,s)^{pm}$
and $\mathcal{M}^{+}(2m-1):=\mathcal{M}^{+}(2pm-r,s)^{pm-r}$
for $m\in\mathbb{Z}$.
Then there exists an exact sequence
\begin{equation}\label{tBGG}
\cdots
\rightarrow \mathcal{M}_{n}^{+}
\rightarrow \cdots
\rightarrow \mathcal{M}_{2}^{+}
\rightarrow \mathcal{M}_{1}^{+}
\rightarrow \mathcal{M}^{+}(r,s)
\rightarrow \mathcal{L}(r,s)
\rightarrow 0
\end{equation}
of $\mf{ns}_{2}$-modules\footnote{In the rest of this section, for simplicity of notation, we do not consider
the $\mathbb{Z}_{2}$-graded structure given in the previous subsection.}.
\end{theom}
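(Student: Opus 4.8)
The plan is to deduce Theorem~\ref{FsstBGG} from the parabolic BGG resolution of Theorem~\ref{MalikovgBGG} by transporting it through the Kazama--Suzuki coset functor $\Omega_{j}^{+}\colon\mathcal{C}_{\sf af}\to\mathcal{C}_{\sf sc}$ of \cite[\S4.1]{sato2016equivalences}. First I would recall the two structural properties of this functor that do all the work: it is exact (\cite[Theorem 4.4]{sato2016equivalences}), and it intertwines the spectral flow automorphisms on the two sides (\cite[Corollary 6.3]{sato2016equivalences}), so that $\Omega_{j}^{+}$ applied to a spectral-flow twist of an $\widehat{\mf{sl}}_{2}$-module is the corresponding spectral-flow twist of the $\mathfrak{ns}_{2}$-module. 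The key dictionary entries I would verify by direct computation of highest weights are: $\Omega_{j_{m,n}}^{+}$ sends the generalized Verma module $V(m)$ (with the appropriate twist) to the chiral Verma module $\mathcal{M}^{+}(m,n)$, and it sends the simple module $L(r,0)$ to $\mathcal{L}(r,s)$; more precisely, because $j_{m,n}=\tfrac{m-1}{2}-\tfrac{n}{2a}$ depends on both $m$ and $n$, I would track how the single parabolic index $m$ in Theorem~\ref{MalikovgBGG} together with a suitable choice of $j$ reproduces the shifted pairs $(2pm\pm r,s)$ and the spectral-flow exponents $pm$ and $pm-r$ appearing in the definitions of $\mathcal{M}^{+}(2m)$ and $\mathcal{M}^{+}(2m-1)$.

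Concretely, I would fix the spectral flow parameter so that applying $\Omega_{\star}^{+}$ to the resolution \eqref{gBGG} of $L(r,0)$ yields a complex whose terms are exactly the $\mathcal{M}^{+}_{n}$ of the statement: the term $V(2pm+r)$ maps to $\mathcal{M}^{+}(2pm+r,s)^{pm}$ and $V(2pm-r)$ maps to $\mathcal{M}^{+}(2pm-r,s)^{pm-r}$, after checking that the generalized-Verma annihilation condition $F_{0}^{m}\ket{m}=0$ is carried by the functor to the chiral annihilation condition $G^{+}_{-1/2}\ket{j,c}^{\sf sc}=0$ (this is the coset-theoretic incarnation of the fact, already used in \S\ref{relaxed} for the relaxed case, that $\Omega_{j}^{+}$ turns $\mathfrak{sl}_{2}$-singular-vector data into $\mathfrak{ns}_{2}$-singular-vector data). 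Exactness of $\Omega_{\star}^{+}$ then immediately gives that the image complex is exact, i.e. \eqref{tBGG} is a resolution; and since $\Omega_{\star}^{+}(L(r,0))=\mathcal{L}(r,s)$ the resolved object is the correct simple module. I would also note that one should check the resolution does not collapse or split — this follows because $\Omega_{\star}^{+}$ is faithful on the relevant subcategory (or, failing a clean faithfulness statement, by comparing formal characters via $\ch\Omega_{\star}^{+}$, exactly as in the proof of the relaxed BGG resolution above, using \cite[Proposition 7.1 (4)]{sato2016equivalences}).

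The main obstacle is the bookkeeping of spectral flow twists and the verification that the maps of \eqref{gBGG} are genuinely sent to nonzero maps with the claimed source/target twists: one must pin down a single global twist parameter for the ambient module and then compute, for every $n$, the precise spectral-flow exponent of $\Omega^{+}$ applied to $V_{n}$, reconciling the additive shift by $pm$ coming from the $\widehat{\mf{sl}}_{2}$ weight $j_{2pm\pm r,s}$ with the conventions for $\mathcal{M}^{+}(\cdot)^{(\cdot)}$. This is a finite but delicate index computation; once the dictionary $V(2pm\pm r)\leftrightarrow\mathcal{M}^{+}(2pm\pm r,s)^{pm\ \text{or}\ pm-r}$ and $L(r,0)\leftrightarrow\mathcal{L}(r,s)$ is established, exactness and spectral-flow equivariance of $\Omega^{+}$ finish the proof with no further effort. (Since the statement is attributed to \cite{FSST99} with the proof in \cite[Theorem 7.11]{sato2016equivalences}, I would in practice simply cite that argument; the sketch above is how one reconstructs it.)
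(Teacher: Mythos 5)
Your overall strategy — apply the exact Kazama--Suzuki functor $\Omega^{+}$ to an $\widehat{\mf{sl}}_{2}$ BGG resolution and keep track of spectral flow via the equivariance property — is indeed the strategy the paper employs for the companion Theorem~\ref{chBGG2} and the strategy cited (via \cite[Theorem~7.11]{sato2016equivalences}) for Theorem~\ref{FsstBGG}. But you have chosen the wrong affine resolution as input, and the dictionary you give is incorrect.

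You propose to apply $\Omega^{+}$ to the \emph{parabolic} resolution of Theorem~\ref{MalikovgBGG}, which resolves $L(r,0)$ by generalized Verma modules $V(m)$. There are two problems. First, Theorem~\ref{FsstBGG} resolves $\mathcal{L}(r,s)$ for all $(r,s)\in\mathcal{I}_{\sf KW}$, including $s\neq0$; the parabolic resolution \eqref{gBGG} is only available for $s=0$, so your input simply does not exist for most of the cases covered by the statement. Second, $\Omega^{+}$ does not send $V(m)$ to a chiral Verma module. By Lemma~\ref{weyl}, $\Omega_{j_{m,0}}^{+}\bigl(V(m)\bigr)\simeq\mathcal{V}(m)$, which is the quotient of $\mathcal{M}^{+}_{2aj_{m,0},c}$ by the relation $G^{-}_{-\frac{2m-1}{2}}\cdots G^{-}_{-\frac{1}{2}}\ket{m}^{\sf sc}=0$ — this is the transform of the parabolic condition $F_{0}^{m}\ket{m}=0$, and it is a condition on the $G^{-}$'s, not on $G^{+}_{-1/2}$. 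The chiral condition $G^{+}_{-1/2}\ket{j,c}^{\sf sc}=0$ is already present in $\mathcal{V}(m)$ and is the transform of the \emph{ordinary} Verma annihilation condition $E_{0}\ket{j,k}=0$. So your claim that ``$F_{0}^{m}\ket{m}=0$ is carried by the functor to $G^{+}_{-1/2}\ket{j,c}^{\sf sc}=0$'' is wrong on both the domain and codomain side. Applying $\Omega^{+}$ to \eqref{gBGG} is precisely what gives Theorem~\ref{chBGG2} (a resolution of $\mathcal{L}(r,0)$ by the $\mathcal{V}(n)$'s), not Theorem~\ref{FsstBGG}.

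The correct input is the full Verma-module BGG resolution of Theorem~\ref{MalikovBGG}, eq.~\eqref{BGG}. Affine Verma modules $M(m,n)$ are sent by $\Omega^{+}_{j_{m,n}}$ to chiral Verma modules $\mathcal{M}^{+}(m,n)$ (this is the content of \cite[Proposition~7.1]{sato2016equivalences}, invoked in the same spirit when the paper identifies $\mathcal{M}_{m,n;j}\simeq\Omega^{+}_{j}(R_{m,n;j})$), and $L(r,s)\mapsto\mathcal{L}(r,s)$. Combining this with spectral flow equivariance, exactly as in the displayed computation $\Omega^{+}_{j_{r,0}}(V_{2m})\simeq\Omega^{+}_{j_{2pm+r,0}}\bigl(V(2pm+r)\bigr)^{pm}$ in the proof of Theorem~\ref{chBGG2} but with $M(\cdot,\cdot)$ in place of $V(\cdot)$ and general $s$, produces the sequence \eqref{tBGG}. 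Once you replace the parabolic input by \eqref{BGG}, the rest of your argument (exactness of $\Omega^{+}$ and the bookkeeping of spectral flow exponents) goes through.
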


In what follows, we construct a new BGG type resolution of
$\mathcal{L}(r,0)$, which is the counterpart
of (\ref{gBGG}) in the $\mathcal{N}=2$ side.
For convinience of later use, we give an explicit description of 
$\Omega_{j_{m,0}}^{+}\big(V(m)\big)$ as follows.

\begin{lem}\label{weyl}
For $m\in\mathbb{Z}_{>0}$, the $\mf{ns}_{2}$-module 
$\Omega_{j_{m,0}}^{+}\big(V(m)\big)$
is isomorphic to the $\mf{ns}_{2}$-module $\mathcal{V}(m)$ freely generated by
$\ket{m}^{\sf sc}$ subject to the relations
\begin{align*}
&(\mf{ns}_{2})_{+}\ket{m}^{\sf sc}:=\{0\},\ G^{+}_{-\frac{1}{2}}\ket{m}^{\sf sc}
:=G^{-}_{-\frac{2m-1}{2}}\cdots G^{-}_{-\frac{3}{2}}G^{-}_{-\frac{1}{2}}\ket{m}^{\sf sc}:=0,\\ 
&L_{0}\ket{m}^{\sf sc}:=aj_{m,0}\ket{m}^{\sf sc},\ J_{0}\ket{m}^{\sf sc}:=2aj_{m,0}\ket{m}^{\sf sc},\ C\ket{m}^{\sf sc}:=3(1-2a)\ket{m}^{\sf sc}.
\end{align*}
\end{lem}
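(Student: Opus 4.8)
The plan is to present both $\mathcal{V}(m)$ and $\Omega_{j_{m,0}}^{+}\big(V(m)\big)$ as the cokernel of one and the same $\mf{ns}_{2}$-module homomorphism, using the exactness of $\Omega_{j_{m,0}}^{+}$ (\cite[Theorem 4.4]{sato2016equivalences}). First I would rewrite $V(m)$ on the $\widehat{\mf{sl}}_{2}$ side: by construction $V(m)=M(m,0)\big/\big\langle F_{0}^{m}\ket{m,0}\big\rangle$, and since $2j_{m,0}=m-1$, the identity $E_{0}F_{0}^{\ell}\ket{m,0}=\ell(2j_{m,0}-\ell+1)F_{0}^{\ell-1}\ket{m,0}$ together with the evident positive-mode annihilations shows that $F_{0}^{m}\ket{m,0}$ is a singular vector of $\widehat{\mf{sl}}_{2}$-weight $j_{-m,0}=-\tfrac{m+1}{2}$. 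Hence there is an exact sequence
\begin{equation*}
M(-m,0)\xrightarrow{\ \varphi\ }M(m,0)\longrightarrow V(m)\longrightarrow 0,\qquad \varphi\ket{-m,0}=F_{0}^{m}\ket{m,0},
\end{equation*}
and applying $\Omega_{j_{m,0}}^{+}$ yields an exact sequence $\Omega_{j_{m,0}}^{+}M(-m,0)\to\Omega_{j_{m,0}}^{+}M(m,0)\to\Omega_{j_{m,0}}^{+}V(m)\to 0$.

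Next I would identify the first two terms using \cite{sato2016equivalences}. The description of $\Omega_{j}^{+}$ on highest weight Verma modules, combined with the spectral flow equivariance \cite[Corollary 6.3]{sato2016equivalences}, gives $\Omega_{j_{m,0}}^{+}M(m,0)\cong\mathcal{M}^{+}(m,0)=\mathcal{M}^{+}_{2aj_{m,0},3(1-2a)}$ with $\ket{m,0}\mapsto\ket{2aj_{m,0},3(1-2a)}^{\sf sc}$, while $\Omega_{j_{m,0}}^{+}M(-m,0)$ is a spectral flow twist of the chiral Verma module $\mathcal{M}^{+}(-m,0)$. A short computation of conformal weight and charge (using that $j_{m,0}-j_{-m,0}=m$) shows that the cyclic vector of $\Omega_{j_{m,0}}^{+}M(-m,0)$, hence its image $w\in\mathcal{M}^{+}(m,0)$ under $\Omega_{j_{m,0}}^{+}\varphi$, has $L_{0}$-eigenvalue $\tfrac{a(m-1)}{2}+\tfrac{m^{2}}{2}$ and $J_{0}$-eigenvalue $a(m-1)-m$; moreover $w\ne 0$ since $\Omega_{j_{m,0}}^{+}$ is faithful (\cite{sato2016equivalences}) and $\langle F_{0}^{m}\ket{m,0}\rangle\ne 0$. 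Consequently $\Omega_{j_{m,0}}^{+}V(m)\cong\mathcal{M}^{+}(m,0)\big/\langle w\rangle_{\mf{ns}_{2}}$.

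To conclude, I would show that $\langle w\rangle_{\mf{ns}_{2}}$ is exactly the submodule divided out in the presentation of $\mathcal{V}(m)$. This is elementary: by a PBW argument, the subspace of $\mathcal{M}^{+}(m,0)$ spanned by monomials in $L_{-n},J_{-n},G^{\pm}_{-r}$ applied to $\ket{2aj_{m,0},3(1-2a)}^{\sf sc}$ (subject to $G^{+}_{-1/2}\ket{\cdot}^{\sf sc}=0$) of $L_{0}$-eigenvalue $\tfrac{a(m-1)}{2}+\tfrac{m^{2}}{2}$ and $J_{0}$-eigenvalue $a(m-1)-m$ is one-dimensional, spanned by $G^{-}_{-\frac{2m-1}{2}}\cdots G^{-}_{-\frac{3}{2}}G^{-}_{-\frac{1}{2}}\ket{2aj_{m,0},3(1-2a)}^{\sf sc}$ — the identity $\tfrac12(1+3+\cdots+(2m-1))=\tfrac{m^{2}}{2}$ forces precisely these $m$ fermionic modes and leaves no room for bosonic generators or for any $G^{+}$. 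Hence $w$ is a nonzero scalar multiple of this vector, $\langle w\rangle_{\mf{ns}_{2}}$ coincides with the submodule quotiented out in $\mathcal{V}(m)$, and therefore $\Omega_{j_{m,0}}^{+}V(m)\cong\mathcal{V}(m)$. The main obstacle is the middle step — pinning down $\Omega_{j_{m,0}}^{+}M(-m,0)$ and the $(L_{0},J_{0})$-grading of $w$ from \cite{sato2016equivalences}, in particular the spectral-flow bookkeeping relating the functor parameters $j_{m,0}$ and $j_{-m,0}$; the two outer steps are routine.
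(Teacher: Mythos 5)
Your argument is correct, and every step I checked goes through: the exactness of $\Omega^{+}_{j}$, the presentation $M(-m,0)\to M(m,0)\to V(m)\to 0$ (note $E_{0}F_{0}^{m}\ket{m,0}=m\bigl(2j_{m,0}-m+1\bigr)F_{0}^{m-1}\ket{m,0}=0$ since $2j_{m,0}=m-1$), the spectral-flow bookkeeping, and the weight-space computation. In particular, the spectral flow by $\theta=-m$ on the cyclic vector of $\mathcal{M}^{+}(-m,0)$, whose $(L_{0},J_{0})$-weight is $\bigl(aj_{-m,0},2aj_{-m,0}\bigr)$, gives $L_{0}^{-m}=aj_{-m,0}+(-m)\cdot 2aj_{-m,0}+\tfrac{c}{6}m^{2}=aj_{m,0}+\tfrac{m^{2}}{2}$ and $J_{0}^{-m}=2aj_{-m,0}-\tfrac{c}{3}m=2aj_{m,0}-m$, matching your stated values, and the PBW count at this weight in $\mathcal{M}^{+}(m,0)$ does indeed force $\#G^{+}=0$, no bosonic modes, and the modes $G^{-}_{-1/2},\ldots,G^{-}_{-(2m-1)/2}$ exactly.

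The paper omits the proof, citing only that it is "the same as [Proposition 7.1]{sato2016equivalences}," which suggests a direct verification of the image: one realizes $\Omega^{+}_{j_{m,0}}(V(m))$ concretely via the Kazama--Suzuki coset, checks the cyclic vector's annihilation conditions by hand (the $G^{+}_{-1/2}$ relation from $E_{0}\ket{m}=0$, the $G^{-}$-string relation from $F_{0}^{m}\ket{m}=0$), and then verifies nothing further vanishes by a character or PBW comparison. Your approach instead \emph{derives} the $G^{-}$-string relation functorially: you express $V(m)$ as a cokernel of a map of Verma modules, push the two-step resolution through the exact functor $\Omega^{+}_{j_{m,0}}$ using spectral-flow equivariance, and then pin down the image of the generator by a one-dimensional weight-space argument. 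Both routes rest on exactly the same inputs from \cite{sato2016equivalences} (exactness of $\Omega^{+}_{j}$, the identification $\Omega^{+}_{j_{0}}(M_{j_{0},k})\cong\mathcal{M}^{+}_{2aj_{0},c}$, and the spectral-flow equivariance $\Omega^{+}_{j-N}(M)\simeq\Omega^{+}_{j}(M)^{N}$). What your version buys is that the relation $G^{-}_{-(2m-1)/2}\cdots G^{-}_{-1/2}\ket{m}^{\sf sc}=0$ and the absence of further relations fall out together from exactness plus a dimension count, rather than requiring a separate character check; what it costs is the extra spectral-flow bookkeeping for $\Omega^{+}_{j_{m,0}}M(-m,0)$, which the direct computation sidesteps.
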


Since the proof is same as that of \cite[Proposition 7.1]{sato2016equivalences}), 
we omit it.

\begin{rem}
The $\mf{ns}_{2}$-module $\mathcal{V}(1)$ is by definition isomorphic to the vacuum $\mf{ns}_{2}$-module $V_{c}$ of central charge $c=c_{p,p'}$ (see Appendix \ref{N2VOSA} for the definition).
\end{rem}

\begin{theom}\label{chBGG2}
We set 
$\mathcal{V}_{2m}:=
\mathcal{V}(2pm+r)^{pm}$ and
$\mathcal{V}_{2m-1}:=
\mathcal{V}(2pm-r)^{pm-r}$
for $m\in\mathbb{Z}_{>0}$.
Then there exists an exact sequence
\begin{equation}\label{gBGG2}
\cdots
\rightarrow \mathcal{V}_{n}
\rightarrow \cdots
\rightarrow \mathcal{V}_{2}
\rightarrow \mathcal{V}_{1}
\rightarrow \mathcal{V}(r)
\rightarrow \mathcal{L}(r,0)
\rightarrow 0.
\end{equation}
\end{theom}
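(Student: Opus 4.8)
The plan is to obtain \eqref{gBGG2} by applying the exact functor $\Omega^{+}_{j_{r,0}}\colon\mathcal{C}_{\sf af}\to\mathcal{C}_{\sf sc}$ to the parabolic BGG resolution \eqref{gBGG} of $L(r,0)$ from Theorem \ref{MalikovgBGG}, in exact analogy with the way Theorem \ref{FsstBGG} is deduced from \eqref{BGG} in \cite[Theorem 7.11]{sato2016equivalences}. First I would check that every term of \eqref{gBGG} — the generalized Verma modules $V(r)$ and $V_{n}$ ($n\geq1$) and the simple module $L(r,0)$ — lies in the source category $\mathcal{C}_{\sf af}$; this is built into the definition of $\mathcal{C}_{\sf af}$, which contains the generalized Verma modules at the admissible level $k$ and is stable under spectral flow. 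Since $\Omega^{+}_{j_{r,0}}$ is exact by \cite[Theorem 4.4]{sato2016equivalences}, it carries \eqref{gBGG} to an exact sequence
\[
\cdots\rightarrow \Omega^{+}_{j_{r,0}}(V_{n})\rightarrow\cdots\rightarrow \Omega^{+}_{j_{r,0}}(V_{1})\rightarrow \Omega^{+}_{j_{r,0}}\big(V(r)\big)\rightarrow \Omega^{+}_{j_{r,0}}\big(L(r,0)\big)\rightarrow 0
\]
of $\mf{ns}_{2}$-modules, so that only the identification of the terms remains.

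The Verma-type terms are identified by combining Lemma \ref{weyl} with the spectral flow equivariance of $\Omega^{+}_{\bullet}$ (\cite[Corollary 6.3]{sato2016equivalences}). For $V(r)$ the functor parameter already matches the weight, so Lemma \ref{weyl} gives $\Omega^{+}_{j_{r,0}}\big(V(r)\big)\cong\mathcal{V}(r)=\mathcal{V}(r)^{0}$ directly. For the higher terms the parameters differ by integers: $V_{2m}=V(2pm+r)$ has $H_{0}$-weight $2j_{2pm+r,0}$ with $j_{2pm+r,0}=j_{r,0}+pm$, and $V_{2m-1}=V(2pm-r)$ has $H_{0}$-weight $2j_{2pm-r,0}$ with $j_{2pm-r,0}=j_{r,0}+(pm-r)$; note $2pm\pm r>0$ since $1\leq r\leq p-1$ and $m\geq1$, so Lemma \ref{weyl} does apply to $V(2pm\pm r)$. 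Spectral flow equivariance relates $\Omega^{+}_{j}$ and $\Omega^{+}_{j-\ell}$ for $\ell\in\mathbb{Z}$ through the $\mf{ns}_{2}$ spectral flow automorphism (Appendix \ref{SF}); with the normalization chosen so that a downward shift of the parameter by $\ell$ yields a spectral flow twist by $\ell$, this gives
\[
\Omega^{+}_{j_{r,0}}\big(V(2pm+r)\big)\cong\Omega^{+}_{j_{2pm+r,0}}\big(V(2pm+r)\big)^{pm}\cong\mathcal{V}(2pm+r)^{pm}=\mathcal{V}_{2m},
\]
and likewise $\Omega^{+}_{j_{r,0}}\big(V(2pm-r)\big)\cong\mathcal{V}(2pm-r)^{pm-r}=\mathcal{V}_{2m-1}$.

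Finally, the cokernel $\Omega^{+}_{j_{r,0}}\big(L(r,0)\big)$ must be identified with $\mathcal{L}(r,0)$. It is a nonzero quotient of $\Omega^{+}_{j_{r,0}}\big(V(r)\big)\cong\mathcal{V}(r)$, hence of the chiral Verma module $\mathcal{M}^{+}(r,0)$, so it is a highest weight $\mf{ns}_{2}$-module whose unique simple quotient is $\mathcal{L}(r,0)$ by definition; since $L(r,0)$ is simple and $\Omega^{+}_{j_{r,0}}$ preserves irreducibility inside these categories (the same property already invoked in the proof of Theorem \ref{FsstBGG}), the cokernel is itself simple and therefore isomorphic to $\mathcal{L}(r,0)$, which yields \eqref{gBGG2}. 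As a consistency check, the Euler--Poincar\'e identity $\ch\mathcal{L}(r,0)=\sum_{n}(-1)^{n}\ch\mathcal{V}_{n}$ read off from \eqref{gBGG2} matches the one coming from Theorem \ref{FsstBGG} in the case $s=0$. I expect the main obstacle to be precisely the bookkeeping of the second paragraph: matching each integer shift of the functor parameter against the $\mf{ns}_{2}$ spectral flow exponent with the correct sign, and checking that \eqref{gBGG} genuinely lies in the domain of $\Omega^{+}_{j_{r,0}}$ so that exactness may be applied termwise; once this is in place, the identification of the cokernel with $\mathcal{L}(r,0)$ is an immediate consequence of the structural properties of the Kazama--Suzuki coset functor established in \cite{sato2016equivalences}.
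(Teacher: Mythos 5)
Your proposal is correct and follows essentially the same route as the paper: apply the exact Kazama--Suzuki functor $\Omega^{+}_{j_{r,0}}$ to the parabolic BGG resolution \eqref{gBGG}, then identify each term using Lemma \ref{weyl} together with spectral flow equivariance (the paper cites \cite[Example 6.4]{sato2016equivalences} for the identity $\Omega^{+}_{j-\ell}(M)\simeq\Omega^{+}_{j}(M)^{\ell}$ that you invoke). The only difference is that you spell out the identification of the cokernel $\Omega^{+}_{j_{r,0}}\big(L(r,0)\big)\simeq\mathcal{L}(r,0)$, which the paper leaves implicit.
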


\begin{proof}
By \cite[Example 6.4]{sato2016equivalences}, we have 
$$\Omega^{+}_{j_{r,0}}(V_{2m})
=\Omega^{+}_{j_{2pm+r,0}-pm}\big(V(2pm+r)\big)
\simeq\Omega^{+}_{j_{2pm+r,0}}\big(V(2pm+r)\big)^{pm}$$ for any $m\in\mathbb{Z}_{>0}$.
Then, by Lemma \ref{weyl}, we obtain
$\mathcal{V}_{2m}\simeq\Omega^{+}_{j_{r,0}}(V_{2m})$.
We also obtain
$\mathcal{V}_{2m-1}\simeq\Omega^{+}_{j_{r,0}}(V_{2m-1})$
in the same way.
This completes the proof.
\end{proof}

\subsection{BGG type resolution for typical modules}

For $(m,n)\in\mathbb{Z}^{2}$ and $j\in\mathbb{C}$, we set
$$\mathcal{M}_{m,n;j}:=\mathcal{M}_{\Delta(j_{m,n})-aj^{2},2aj,3(1-2a)}$$
and denote its simple quotient by $\mathcal{L}_{m,n;j}$.
Note that by \cite[Proposition 7.1 (1) and (4)]{sato2016equivalences} we have
$\mathcal{M}_{m,n;j}\simeq\Omega^{+}_{j}(R_{m,n;j})$
and $\mathcal{L}_{m,n;j}\simeq\Omega^{+}_{j}(L_{m,n;j})$.

By applying the exact functor $\Omega^{+}_{j}$ to the BGG type resolution (\ref{rBGG}),
we obtain the following resolution of $\mathcal{L}_{r,s;j}$:

\begin{theom}\label{rBGG2}
Assume that $(r,s)\in\mathcal{I}_{\sf BPZ}$ and $j\notin\{j_{r,s},j_{p-r,p'-s}\}+\mathbb{Z}$.
We set $\mathcal{M}_{n}:=\mathcal{M}(n;j)\oplus \mathcal{M}(-n;j)$ for $n\in\mathbb{Z}_{>0}$,
where
$\mathcal{M}(2m;j):=(\mathcal{M}_{2pm+r,s;j})^{pm}$
and $\mathcal{M}(2m-1;j):=(\mathcal{M}_{2pm-r,s;j})^{pm-r}$
for $m\in\mathbb{Z}$.
Then there exists an exact sequence
$$\cdots
\rightarrow \mathcal{M}_{n}
\rightarrow \cdots
\rightarrow \mathcal{M}_{2}
\rightarrow \mathcal{M}_{1}
\rightarrow \mathcal{M}_{r,s;j}
\rightarrow \mathcal{L}_{r,s;j}
\rightarrow 0.$$
\end{theom}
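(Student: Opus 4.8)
The plan is to obtain the resolution of $\mathcal{L}_{r,s;j}$ by transporting the relaxed BGG resolution \eqref{rBGG} through the exact functor $\Omega^{+}_{j}$, exactly as Theorem \ref{chBGG2} was deduced from Theorem \ref{MalikovgBGG}. First I would recall that $\Omega^{+}_{j}$ is exact by \cite[Theorem 4.4]{sato2016equivalences}, so applying it termwise to \eqref{rBGG} produces an exact sequence
$$\cdots\rightarrow\Omega^{+}_{j}(R_{n})\rightarrow\cdots\rightarrow\Omega^{+}_{j}(R_{1})\rightarrow\Omega^{+}_{j}(R_{r,s;j})\rightarrow\Omega^{+}_{j}(L_{r,s;j})\rightarrow0,$$
where the hypothesis $(r,s)\in\mathcal{I}_{\sf BPZ}$ and $j\notin\{j_{r,s},j_{p-r,p'-s}\}+\mathbb{Z}$ is precisely what is needed to invoke \eqref{rBGG} in the first place. (Strictly, $\Omega^{+}_{j}$ is defined on the subcategory $\mathcal{C}_{\sf af}$, so one should first note that all the modules appearing in \eqref{rBGG}, being relaxed Verma modules and their simple subquotients at admissible level, lie in $\mathcal{C}_{\sf af}$; this is implicit in \cite{sato2016equivalences}.)

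The second step is to identify the terms. By \cite[Proposition 7.1 (1) and (4)]{sato2016equivalences}, already recorded just above the statement, $\Omega^{+}_{j}(R_{m,n;j})\simeq\mathcal{M}_{m,n;j}$ and $\Omega^{+}_{j}(L_{r,s;j})\simeq\mathcal{L}_{r,s;j}$, which takes care of the rightmost two nonzero terms. For the chain terms I would use the spectral flow equivariance of $\Omega^{+}_{j}$ (\cite[Corollary 6.3]{sato2016equivalences}, and the explicit computation in \cite[Example 6.4]{sato2016equivalences}) together with the definitions $R(2m;j)=R_{2pm+r,s;j}$ and $R(2m-1;j)=R_{2pm-r,s;j}$. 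Concretely, recall from \S\ref{RVM} that $R_{m,n;j}\simeq{\sf F}^{-j+j_{m,n}}M(m,n)$, and that under $\Omega^{+}_{j}$ a twist by the affine twisting functor ${\sf F}$ translates into an $\mathcal{N}=2$ spectral flow twist; chasing the same bookkeeping as in the proof of Theorem \ref{chBGG2} gives $\Omega^{+}_{j}(R(2m;j))\simeq(\mathcal{M}_{2pm+r,s;j})^{pm}=\mathcal{M}(2m;j)$ and $\Omega^{+}_{j}(R(2m-1;j))\simeq(\mathcal{M}_{2pm-r,s;j})^{pm-r}=\mathcal{M}(2m-1;j)$, hence $\Omega^{+}_{j}(R_{n})\simeq\mathcal{M}_{n}$ for all $n$. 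Substituting these isomorphisms into the exact sequence above yields the asserted resolution.

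The one genuinely delicate point — the "hard part" — is the spectral flow bookkeeping for the chain terms: making sure the integers $pm$ and $pm-r$ labelling the $\mathcal{N}=2$ spectral flow twists come out correctly from the affine-side twists ${\sf F}^{-j+j_{2pm\pm r,s}}$ composed with $\Omega^{+}_{j}$, and checking that the hypothesis $j\notin\{j_{r,s},j_{p-r,p'-s}\}+\mathbb{Z}$ keeps \emph{all} the relevant relaxed modules $R_{2pm\pm r,s;j}$ in the regime where Lemma \ref{homog} (or its $\mathcal{N}=2$ analogue via \cite[Proposition 7.1]{sato2016equivalences}) applies, so that the identification of terms is uniform in $m$. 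Since $j_{2pm+r,s}$ and $j_{2pm-r,s}$ differ from $j_{r,s}$ and $j_{p-r,p'-s}$ by integers, this is exactly the same condition, so no new constraint is needed — but this coincidence should be spelled out. Everything else is a formal consequence of exactness, and I expect the write-up to be short, essentially parallel to the proof of Theorem \ref{chBGG2}.
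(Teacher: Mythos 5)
Your proposal follows the paper's own (omitted) proof exactly: apply the exact functor $\Omega^{+}_{j}$ to the relaxed BGG resolution \eqref{rBGG}, identify the two rightmost terms via \cite[Proposition 7.1 (1) and (4)]{sato2016equivalences}, and identify each chain term via the spectral flow equivariance of $\Omega^{+}_{j}$ just as in the proof of Theorem \ref{chBGG2}. The remark that the integrality condition on $j$ is stable under the shifts $j_{2pm\pm r,s} - j_{r,s} \in \mathbb{Z}$ is a correct and useful observation, matching the implicit uniformity used in the paper.
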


The proof is similar to that of Theorem \ref{chBGG2}, and we omit it.

\begin{rem}\label{Main1ex}
As a consequence of \cite[Theorem 7.1 and 7.2]{Ad99}, simple 
$L_{c}$-modules are exhasted by the following (see \cite[Theorem 2.1 and Lemma 4.1]{sato2017modular} for detail):
\begin{enumerate}
\item $\mathcal{L}(r,s)^{\theta}$ for $(r,s)\in\mathcal{I}_{\sf KW}$ and $\theta\in\mathbb{Z}$,
\item $\mathcal{L}_{r,s;j}$ for $(r,s)\in\mathcal{I}_{\sf BPZ}$ and $j\notin\{j_{r,s},j_{p-r,p'-s}\}+\mathbb{Z}$.
\end{enumerate}
Since we obtain BGG type resolution for $\mathcal{L}(r,s)^{\theta}$
by applying the exact functor $(?)^{\theta}$ (see Appendix \ref{SF}) to the above exact sequence (\ref{tBGG}),
we conclude Theorem \ref{Main1}.
\end{rem}

\section{Uniqueness of singular generator}\label{uSV}

\subsection{Affine VOA side}

We recall the following singular vector formula for the generalized Verma module $V(r)$
at the Kac--Wakimoto admissible level $k=-2+a^{-1}$,
which is a direct corollary of \cite[Proposition 2.1]{MFF86} and Theorem \ref{gBGG}.

\begin{prp}\label{MFF}
The maximum proper $\widehat{\mf{sl}}_{2}$-submodule of 
$V(r)$ is generated by
the Malikov--Feigin--Fuchs (MFF) singular vector
\begin{align*}
&v_{p,p'}(r):=E_{-1}^{(2p'-1)\kappa-r}
F_{0}^{(2p'-2)\kappa-r}\cdots F_{0}^{2\kappa-r}
E_{-1}^{\kappa-r}\ket{r}
\in V(r)
\end{align*}
in the $\bigl(L_{0}^{\sf Sug},H_{0}\bigr)$-eigenspace with eigenvalue 
$\bigl((p-r)p',2p-r-1\bigr),$
where $\kappa:=a^{-1}=\frac{p}{\,p'}$ and $\ket{r}$ is the highest weight vector in $V(r)$.
\end{prp}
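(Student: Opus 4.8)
The plan is to read off the maximal proper submodule of $V(r)$ from the parabolic BGG resolution (\ref{gBGG}), which makes this submodule cyclic and fixes the weight of a generator, and then to identify that generator with the Malikov--Feigin--Fuchs singular vector by realizing $V(r)$ as a quotient of an ordinary Verma module.

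First I would use exactness of (\ref{gBGG}) at the term $V(r)$: since $L(r,0)$ is the simple quotient, the maximal proper $\widehat{\mf{sl}}_{2}$-submodule $N\subseteq V(r)$ equals the image of the map $V_{1}=V(2p-r)\to V(r)$. As $V(2p-r)$ is generated by its highest weight vector $\ket{2p-r}$, the submodule $N$ is cyclic, generated by the image $w$ of $\ket{2p-r}$; here $w$ is a singular vector, with $H_{0}$-eigenvalue $(2p-r)-1=2p-r-1$ and conformal degree $\Delta(j_{2p-r,0})-\Delta(j_{r,0})=(p-r)p'$ above $\ket{r}$ (a one-line computation using $k+2=\kappa^{-1}$) --- these are precisely the $\bigl(L_{0}^{\sf Sug},H_{0}\bigr)$-data recorded in the statement. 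Here $w\neq0$, since $V(r)$ is not simple at Kac--Wakimoto admissible level. It thus remains to identify $w$, up to a nonzero scalar, with $v_{p,p'}(r)$.

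For this I would present $V(r)=V(r,0)$ as the quotient of the ordinary Verma module $M(r,0)=M_{j_{r,0},k}$ by the submodule $U(\widehat{\mf{sl}}_{2})F_{0}^{r}\ket{r,0}$ generated by the horizontal singular vector $F_{0}^{r}\ket{r,0}$, which is exactly the extra defining relation $F_{0}^{r}\ket{r}=0$ of $V(r)$. By \cite[Proposition 2.1]{MFF86}, $M(r,0)$ carries the Malikov--Feigin--Fuchs singular vector $v^{-}_{r,0}$ obtained from the formula for $v_{p,p'}(r)$ in the statement by replacing $\ket{r}$ with $\ket{r,0}$; although several of its exponents are negative or non-integral, it is read in the usual way, as the well-defined element of $U(\widehat{\mf{sl}}_{2})\ket{r,0}$ produced by reordering the formal word. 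A count of the total powers of $E_{-1}$ and of $F_{0}$ shows that $v^{-}_{r,0}$ sits $(p-r)p'$ above the highest weight, with $H_{0}$-eigenvalue $2p-r-1$, i.e.\ at the same weight as $w$. On the other hand, by the structure of $\widehat{\mf{sl}}_{2}$-Verma modules at admissible level (cf.\ Theorem~\ref{MalikovBGG}), the maximal proper submodule of $M(r,0)$ is generated by its two Malikov--Feigin--Fuchs singular vectors $F_{0}^{r}\ket{r,0}$ and $v^{-}_{r,0}$; passing to the quotient therefore gives $L(r,0)=V(r)/U(\widehat{\mf{sl}}_{2})v_{p,p'}(r)$, where $v_{p,p'}(r)$ denotes the image of $v^{-}_{r,0}$. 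Hence $N=U(\widehat{\mf{sl}}_{2})v_{p,p'}(r)$; comparing with the previous paragraph, $v_{p,p'}(r)$ is a nonzero scalar multiple of $w$, lying in the asserted $\bigl(L_{0}^{\sf Sug},H_{0}\bigr)$-eigenspace. This completes the proof.

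The step I expect to be most delicate is the bookkeeping hidden in the Malikov--Feigin--Fuchs monomial: one has to verify that $E_{-1}^{(2p'-1)\kappa-r}F_{0}^{(2p'-2)\kappa-r}\cdots F_{0}^{2\kappa-r}E_{-1}^{\kappa-r}$ --- several of whose exponents, in particular the outermost and the innermost, may be negative --- collapses, after the standard reordering, to a genuine \emph{nonzero} vector of $V(r)$ and not to a vector of the discarded submodule $U(\widehat{\mf{sl}}_{2})F_{0}^{r}\ket{r,0}$, and that the two-generator description of the maximal submodule of $M(r,0)$, which (\ref{BGG}) encodes at its first step, specializes correctly at the parabolic corner $s=0$, where one of the two singular generators degenerates to the horizontal vector $F_{0}^{r}\ket{r,0}$.
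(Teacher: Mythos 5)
Your proposal is correct and fills in, along the natural lines, the paper's one-line derivation, which simply cites the Malikov--Feigin--Fuchs singular vector formula together with the parabolic BGG resolution~(\ref{gBGG}). The ``delicate step'' you flag at the end is in fact already disposed of by your own argument: nonvanishing of the image of $V_{1}\to V(r)$ (which holds because the highest weight vector of $V_{1}$ cannot lie in the image of $V_{2}\to V_{1}$) forces $N\neq 0$, and your quotient computation then gives $N=U(\widehat{\mf{sl}}_{2})\,v_{p,p'}(r)$, so $v_{p,p'}(r)\neq 0$ automatically; there is no residual gap.
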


\begin{ex}
When $r=1$, we obtain the following (see \cite[Theorem 3.3]{AM95}):
the maximum proper ideal of the universal affine vertex algebra $V_{k}(\mf{sl}_{2})\simeq V(1)$ is generated by the MFF singular vector $v_{p,p'}:=v_{p,p'}(1)\in V_{k}(\mf{sl}_{2})$.
\end{ex}

The next lemma plays a key role in this section.

\begin{lem}\label{affunique}
There exists a unique $\widehat{\mf{sl}}_{2}$-module homomorphism
$$f_{p,p'}(r)\colon R(r)\to V(r);\ 
v(r)\mapsto F_{0}^{p-r}v_{p,p'}(r),$$
where $R(r):=R_{(p-r)p'+\Delta(j_{r,0}),j_{r,0},k}$
and $v(r)$ is the relaxed highest weight vector of $R(r)$.
Moreover we have
\begin{equation*}
\Hom_{\widehat{\mf{sl}}_{2}}\!\!\big(R(r),V(r)\big)=\mathbb{C}f_{p,p'}(r).
\end{equation*}
\end{lem}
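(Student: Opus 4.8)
The plan is to establish the statement in two parts: first, the existence and uniqueness of a homomorphism $R(r)\to V(r)$ sending the relaxed highest weight vector $v(r)$ to the prescribed vector $F_{0}^{p-r}v_{p,p'}(r)$; second, the computation that $\Hom_{\widehat{\mf{sl}}_{2}}(R(r),V(r))$ is one-dimensional.

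First I would verify that $u:=F_{0}^{p-r}v_{p,p'}(r)\in V(r)$ satisfies exactly the annihilation relations defining $v(r)$. By Proposition \ref{MFF}, $v_{p,p'}(r)$ is a singular vector: it is annihilated by $(\widehat{\mf{sl}}_{2})_{+}$ (the positive nilpotent part together with $E_{0}$), and lies in the $(L_{0}^{\sf Sug},H_{0})$-eigenspace with eigenvalue $\big((p-r)p',\,2p-r-1\big)$. Applying $F_{0}^{p-r}$ preserves $X_{n}$-annihilation for $n>0$ and shifts the $H_{0}$-eigenvalue by $-2(p-r)$, giving $H_{0}$-eigenvalue $2p-r-1-2(p-r)=r-1$, i.e.\ $2j_{r,0}$ since $j_{r,0}=\frac{r-1}{2}$; the $L_{0}^{\sf Sug}$-eigenvalue is unchanged at $(p-r)p'$, which added to the weight's intrinsic shift gives the claimed conformal weight $(p-r)p'+\Delta(j_{r,0})$. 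Crucially, $u$ carries \emph{no} constraint of the form $E_{0}u=0$ or $F_{0}^{m}u=0$ — unlike in $V(r)$, where $F_{0}^{r}\ket{r}=0$, the vector $u$ is a \emph{relaxed} highest weight vector because $F_{0}$ acts freely on the string $\{F_{0}^{n}v_{p,p'}(r)\}_{n\in\mathbb{Z}}$ inside $V(r)$ (for $n<p-r$ these are just iterated $F_{0}$-images, and for $n\ge 0$ one checks $E_{0}F_{0}^{p-r-?}$ does not annihilate, so no finite-dimensional $\mf{sl}_{2}$-string closes up). This is precisely the defining freeness property of $R(r)$, so by the universal property of the relaxed Verma module the homomorphism $f_{p,p'}(r)$ exists and is unique as a map \emph{determined by the image of the generator}.

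Second, for the dimension count I would argue that any $\varphi\in\Hom_{\widehat{\mf{sl}}_{2}}(R(r),V(r))$ must send $v(r)$ to a vector of the same $(L_{0},H_{0})$-weight $\big((p-r)p'+\Delta(j_{r,0}),\,2j_{r,0}\big)$ satisfying $(\widehat{\mf{sl}}_{2})_{+}\cdot\varphi(v(r))=0$. So it suffices to show the space of such vectors in $V(r)$ is at most one-dimensional. Here I would use Proposition \ref{MFF}: the maximum proper submodule of $V(r)$ is generated by the single singular vector $v_{p,p'}(r)$, and the quotient $L(r,0)$ has no singular vectors of weight below the generator other than multiples of the highest weight vector, whose $H_{0}$-weight is $r-1=2j_{r,0}$ but whose $L_{0}$-weight is $0\ne (p-r)p'+\Delta(j_{r,0})$ (as $p>r$, so $(p-r)p'>0$). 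Hence any candidate vector lies in the maximum proper submodule, i.e.\ in $U(\widehat{\mf{sl}}_{2})v_{p,p'}(r)$; chasing weights, a vector of $H_{0}$-weight $2j_{r,0}=r-1$ and $L_{0}$-weight $(p-r)p'$ in this submodule must be a scalar multiple of $F_{0}^{p-r}v_{p,p'}(r)$ — here I would invoke that $v_{p,p'}(r)$ generates a module isomorphic (as the image of a Verma-type map, via Theorem \ref{MalikovgBGG} / the BGG picture) to $V(2p-r)$, whose relevant weight space is again determined by the MFF structure and is one-dimensional in the required bidegree. Therefore $\varphi(v(r))\in\mathbb{C}\,F_{0}^{p-r}v_{p,p'}(r)$, which forces $\varphi\in\mathbb{C}f_{p,p'}(r)$.

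The main obstacle I anticipate is the dimension-count step: showing that the subspace of $(\widehat{\mf{sl}}_{2})_{+}$-annihilated vectors of the prescribed $(L_{0},H_{0})$-bidegree in $V(r)$ is exactly one-dimensional, not merely nonzero. The cleanest route is probably not a direct weight-multiplicity computation in $V(r)$ but rather transporting the question through the twisting functor ${\sf F}^{j}$ of \S2.4: since $R(r)\simeq {\sf F}^{-j_{r,0}+j_{r,0}}$-type twists relate relaxed and ordinary Verma modules, and since $V(r)$ is a generalized (parabolic) Verma module, $\Hom(R(r),V(r))$ should be computable from $\Hom$ between ordinary Verma-type modules for $\widehat{\mf{sl}}_{2}$, where the relevant $\Hom$-spaces are known to be at most one-dimensional by the Verma module embedding theory (Malikov--Feigin--Fuchs, Kac--Kazhdan). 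I would also need to rule out that $f_{p,p'}(r)$ is the zero map — i.e.\ that $F_{0}^{p-r}v_{p,p'}(r)\ne 0$ in $V(r)$ — which follows since $v_{p,p'}(r)$ generates a free $U(\mf{sl}_{2}\otimes t^{-1}\mathbb{C}[t^{-1}])$-submodule on the $F_{0}$-string, so $F_{0}$ acts injectively on it.
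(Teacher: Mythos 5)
Your existence step contains a conceptual error, though it is harmless in the end. You claim the map exists ``because $F_0$ acts freely on the string $\{F_0^n v_{p,p'}(r)\}_{n\in\mathbb{Z}}$ inside $V(r)$.'' That assertion is false: $v_{p,p'}(r)$ is an $\mf{sl}_2$-highest weight vector of $H_0$-weight $2p-r-1$ in a module whose level-$0$ $\mf{sl}_2$-summand is finite-dimensional, so $F_0^n v_{p,p'}(r)=0$ for $n$ large, and negative powers of $F_0$ are not even defined in $V(r)$. More to the point, no such freeness is needed. The relaxed Verma module $R(r)$ is by definition \emph{freely generated} subject only to the positive-mode annihilation and the $(L_0,H_0,K)$-eigenvalue conditions; its universal property gives a unique homomorphism sending $v(r)$ to any vector of the target satisfying those same conditions, whether or not that vector happens to be annihilated by $E_0$ or $F_0^m$. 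The paper's proof of the first half is exactly this one-line observation: $v_{p,p'}(r)$ is singular, hence $F_0^{p-r}v_{p,p'}(r)$ satisfies the relaxed highest weight relations of $v(r)$.

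For the dimension count, your reduction to the maximum proper submodule is sound and is in fact what the paper does. But you then stall, conceding the one-dimensionality of the target weight space as the ``main obstacle,'' and your proposed justification — that $U(\widehat{\mf{sl}}_2)v_{p,p'}(r)\simeq V(2p-r)$ — is both not obviously true (a priori this submodule is only a \emph{quotient} of $V(2p-r)$, as the image of a BGG-type map) and not needed. The paper closes the gap with an elementary PBW argument inside $V(r)$ itself: the submodule $U(\widehat{\mf{sl}}_2)v_{p,p'}(r)$ lives at conformal depths $\geq(p-r)p'$, and its component at depth exactly $(p-r)p'$ is, by PBW, the zero-mode orbit $U(\mf{sl}_2)v_{p,p'}(r)$ restricted to that depth, which (since $v_{p,p'}(r)$ is an $\mf{sl}_2$-singular vector) is $\spn_{\mathbb{C}}\{F_0^n v_{p,p'}(r)\,:\,n\geq0\}$. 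These vectors have pairwise distinct $H_0$-weights $2p-r-1-2n$, so the $H_0=r-1$ piece is exactly $\mathbb{C}F_0^{p-r}v_{p,p'}(r)$. Your suggested detour through the twisting functors ${\sf F}^j$ is not wrong in spirit, but it is much heavier than this direct weight-space count and, in the form sketched, is not actually an argument.
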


\begin{proof}
Since $v_{p,p'}(r)$ is a singular vector, the vector $F_{0}^{p-r}v_{p,p'}(r)$
satisfies the same annihilation relations as those for $v(r)$.
Thus the former half follows.

Let $f\colon R(r)\to V(r)$ be an $\widehat{\mf{sl}}_{2}$-module homomorphism.
Then the vector $f\big(v(r)\big)$ lies in 
the $\bigl(L_{0}^{\sf Sug},H_{0}\bigr)$-eigenspace with eigenvalue 
$\bigl((p-r)p',r-1\bigr)$ in the maximum proper $\widehat{\mf{sl}}_{2}$-submodule 
of $V(r)$.
By the Poincar\'{e}--Birkoff--Witt (PBW) theorem, the eigenspace is $\mathbb{C}F_{0}^{p-r}v_{p,p'}(r)$.
This proves the latter half.
\end{proof}

\subsection{$\mathcal{N}=2$ VOSA side}

In this subsection, the letter $c$ always stands for 
the central charge $c_{p,p'}=3(1-2a)$.
The next proposition gives a generator of the maximum proper submodule of $\mathcal{V}(r)$.

\begin{theom}\label{Uniqueness}
Set $\mathcal{M}(r):=\mathcal{M}_{(p-r)p'+aj_{r,0},2aj_{r,0},c}$.
Then we have
$${\sf dim}\Hom_{\mf{ns}_{2}}\!\!\big(\mathcal{M}(r),\mathcal{V}(r)\big)=1,$$
that is, there exists a unique singular vector 
$\mathcal{N}_{p,p'}(r)\in \mathcal{V}(r)^{\bar{0}}$ with $(L_{0},J_{0})$-eigenvalue 
$\bigl((p-r)p'+aj_{r,0},2aj_{r,0}\bigr)$
up to non-zero scalar multiple.
In addition, the maximum proper submodule of $\mathcal{V}(r)$ is generated by 
$\mathcal{N}_{p,p'}(r)$.
\end{theom}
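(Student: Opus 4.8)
\medskip
\noindent\emph{Proof proposal.} The strategy is to pull the whole assertion back to $\widehat{\mf{sl}}_{2}$ through the functor $\Omega^{+}_{j_{r,0}}$, which is exact and fully faithful (\cite{sato2016equivalences}), and then to feed in Lemma \ref{affunique}. The first task is to record two identifications. Because $k+2=a^{-1}$ we have $\Delta(j)=a\,j(j+1)$, so that $\Delta(j_{r,0})-a\,j_{r,0}^{2}=a\,j_{r,0}$; hence, applying \cite[Proposition 7.1]{sato2016equivalences} to the relaxed Verma module $R(r)=R_{(p-r)p'+\Delta(j_{r,0}),\,j_{r,0},\,k}$ of Lemma \ref{affunique}, one gets $\Omega^{+}_{j_{r,0}}(R(r))\simeq\mathcal{M}_{(p-r)p'+a j_{r,0},\,2a j_{r,0},\,c}=\mathcal{M}(r)$, while $\Omega^{+}_{j_{r,0}}(V(r))\simeq\mathcal{V}(r)$ by Lemma \ref{weyl}.

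With these in hand, full faithfulness of $\Omega^{+}_{j_{r,0}}$ gives $\Hom_{\mf{ns}_{2}}(\mathcal{M}(r),\mathcal{V}(r))\cong\Hom_{\widehat{\mf{sl}}_{2}}(R(r),V(r))=\mathbb{C}\,f_{p,p'}(r)$, the last equality being Lemma \ref{affunique}; this is the asserted identity ${\sf dim}\,\Hom_{\mf{ns}_{2}}(\mathcal{M}(r),\mathcal{V}(r))=1$. I would then let $\mathcal{N}_{p,p'}(r)$ be the image of the cyclic generator of $\mathcal{M}(r)$ under $\Omega^{+}_{j_{r,0}}(f_{p,p'}(r))$. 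It is nonzero: $\Omega^{+}_{j_{r,0}}$ is faithful and $f_{p,p'}(r)$ sends the generator of $R(r)$ to $F_{0}^{p-r}v_{p,p'}(r)\neq0$ (the $H_{0}$-weight of $v_{p,p'}(r)$ is the integer $2p-r-1$, which is $\geq p-r$). As the image of a highest weight vector it is a singular vector, and it carries the $(L_{0},J_{0},C)$-weight of the generator of $\mathcal{M}(r)$, i.e. $\bigl((p-r)p'+a j_{r,0},\,2a j_{r,0},\,c\bigr)$; since its $J_{0}$-eigenvalue coincides with that of $\ket{r}^{\sf sc}$, every PBW monomial over $\ket{r}^{\sf sc}$ occurring in it involves equally many $G^{+}$- and $G^{-}$-modes, hence is even, so $\mathcal{N}_{p,p'}(r)\in\mathcal{V}(r)^{\bar{0}}$. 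Finally, singular vectors of $\mathcal{V}(r)$ of this weight correspond bijectively, up to nonzero scalar, with nonzero elements of $\Hom_{\mf{ns}_{2}}(\mathcal{M}(r),\mathcal{V}(r))$, so uniqueness up to a nonzero scalar follows.

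For the last assertion I would argue as follows. On the affine side the image of $f_{p,p'}(r)$ is the \emph{entire} maximum proper submodule of $V(r)$: by Proposition \ref{MFF} the latter is $U(\widehat{\mf{sl}}_{2})\,v_{p,p'}(r)$, and since $v_{p,p'}(r)$ is annihilated by $E_{0}$ and has $H_{0}$-weight $2p-r-1\geq p-r$, the vector $E_{0}^{p-r}F_{0}^{p-r}v_{p,p'}(r)$ is a nonzero multiple of $v_{p,p'}(r)$ (an elementary computation inside the $\mf{sl}_{2}$-triple $\{E_{0},F_{0},H_{0}\}$), so $v_{p,p'}(r)\in U(\widehat{\mf{sl}}_{2})\,F_{0}^{p-r}v_{p,p'}(r)=\mathrm{im}\,f_{p,p'}(r)$. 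Therefore the submodule generated by $\mathcal{N}_{p,p'}(r)$, which is $\mathrm{im}\,\Omega^{+}_{j_{r,0}}(f_{p,p'}(r))=\Omega^{+}_{j_{r,0}}(\mathrm{im}\,f_{p,p'}(r))$ by exactness, equals $\Omega^{+}_{j_{r,0}}$ of the maximum proper submodule of $V(r)$; on the other hand, since the exact sequence (\ref{gBGG}) stays exact under $\Omega^{+}_{j_{r,0}}$ and becomes (\ref{gBGG2}) (proof of Theorem \ref{chBGG2}), that same submodule is $\ker\bigl(\mathcal{V}(r)\to\mathcal{L}(r,0)\bigr)$, which is the maximum proper submodule of $\mathcal{V}(r)$ because $\mathcal{L}(r,0)$ is simple.

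The only genuine obstacle is this last affine-side computation: one must check that pre-composing the Malikov--Feigin--Fuchs map with $F_{0}^{p-r}$ does not shrink the submodule it generates, which rests on the numerical inequality $2p-r-1\geq p-r$ for the $\mf{sl}_{2}$-weight of $v_{p,p'}(r)$. Everything else is a matter of matching parameters and transporting exact sequences through the exact, fully faithful functor $\Omega^{+}_{j_{r,0}}$.
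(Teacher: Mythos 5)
Your proposal is correct and follows essentially the same route as the paper: apply $\Omega^{+}_{j_{r,0}}$ to the sequence $R(r)\xrightarrow{f_{p,p'}(r)}V(r)\to L(r,0)$, use full faithfulness for the $\Hom$-dimension, and exactness plus Proposition \ref{MFF} for the generator claim. You helpfully make explicit a step the paper leaves silent, namely that $\operatorname{im}f_{p,p'}(r)=U(\widehat{\mf{sl}}_2)v_{p,p'}(r)$, by observing that $E_{0}^{p-r}F_{0}^{p-r}v_{p,p'}(r)$ is a nonzero multiple of $v_{p,p'}(r)$ (since its $H_0$-weight $2p-r-1\geq p-r$); this is exactly the point needed for the middle exactness of the affine sequence, and your parity argument for $\mathcal{N}_{p,p'}(r)\in\mathcal{V}(r)^{\bar 0}$ via $J_0$-weight bookkeeping is also valid.
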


\begin{proof}
Since the functor $\Omega^{+}_{j_{r,0}}$ is fully faithful by \cite[Theorem 4.4]{sato2016equivalences},
we have
$$\Hom_{\mf{ns}_{2}}\!\!\big(\mathcal{M}(r),\mathcal{V}(r)\big)=\mathbb{C}g_{p,p'}(r),$$
where $g_{p,p'}(r):=\Omega^{+}_{j_{r,0}}\big(f_{p,p'}(r)\big)$.
The exact functor $\Omega^{+}_{j_{r,0}}$ sends
the exact sequence
$$R(r)\xrightarrow{f_{p,p'}(r)}
V(r)\overset{\pi}{\longrightarrow}L(r,0)$$
to the following exact sequence
$$\mathcal{M}(r)\xrightarrow{g_{p,p'}(r)}
\mathcal{V}(r)\overset{\pi'}{\longrightarrow}\mathcal{L}(r,0),$$
where $\pi$ and $\pi'$ are the natural projections.
This completes the proof.
\end{proof}

\begin{ex}\label{maxiproper}
When $r=1$, we have
$${\sf dim}\Hom_{\mf{ns}_{2}}\!\!\left(\mathcal{M}_{(p-1)p',0,c},V_{c}\right)=1,$$
that is, there exists a unique singular vector 
$\mathcal{N}_{p,p'}:=\mathcal{N}_{p,p'}(1)\in V_{c}^{\bar{0}}$ with $(L_{0},J_{0})$-eigenvalue $\bigl((p-1)p',0\bigr)$
up to non-zero scalar multiple.
In addition, the maximum proper ideal $I_{c}$ of
the vertex operator superalgebra $V_{c}$ is generated by $\mathcal{N}_{p,p'}$.
\end{ex}

\begin{rem}\label{EG1}
When $p'=1$, the simple highest weight module $\mathcal{L}(r,0)$ lies in the $\mathcal{N}=2$ unitary minimal series \cite{boucher1986determinant} and 
$\mathcal{V}(r)$ contains no subsingular vectors by \cite[Theorem 4]{dorrzapf1998embedding}.
Therefore Theorem \ref{Uniqueness} for $p'=1$ is a direct corollary of \cite[Theorem 1]{dorrzapf1998embedding}.
On the other hand, when $p'>1$, it has not been known that $\mathcal{V}(r)$
contains no subsingular vectors (cf.\,\cite[Remark 8]{eholzer1997unitarity}).
\end{rem}

\section{Applications}\label{app}

\subsection{Application 1: Structure of Zhu's algebra}

In this subsection, we compute the twisted Zhu algebras of $L_{c}$
by using the singular vector $\mathcal{N}_{p,p'}$ in $V_{c}$.
See \ref{DefZhuSec} for the definition of the twisted Zhu algebras.

\subsubsection{The $\sigma$-twisted (=\,Ramond) case}

Take a $\mathbb{Z}_{2}$-homogeneous $\mathbb{C}$-basis of
the general linear Lie superalgebra $\mf{gl}_{1|1}=\End_{\mathbb{C}}(\mathbb{C}^{1|1})$ as
$$
Z:=\frac{1}{2}\begin{pmatrix}
1 & 0\\
0 & 1
\end{pmatrix},\ 
J:=\frac{1}{2}
\begin{pmatrix}
1 & 0\\
0 & -1
\end{pmatrix},\ 
\Psi^{+}:=
\begin{pmatrix}
0 & 1\\
0 & 0
\end{pmatrix},\ 
\Psi^{-}:=
\begin{pmatrix}
0 & 0\\
1 & 0
\end{pmatrix}.
$$
By direct computations, one can verify the following:

\begin{prp}\label{Prp41}
For $c\in\mathbb{C}$, there exists a unique superalgebra isomorphism
\begin{equation*}
i_{\sigma}\colon U(\mf{gl}_{1|1})\overset{\simeq}{\longrightarrow}
A_{\sigma}(V_{c})
\end{equation*}
such that 
$i_{\sigma}(Z)=[\mathbf{L}]-\frac{c}{24}[\mathbf{1}^{c}],\ 
i_{\sigma}(J)=[\mathbf{J}],$ and
$i_{\sigma}(\Psi^{\pm})=[\mathbf{G}^{\pm}].$ 
\end{prp}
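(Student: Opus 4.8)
The plan is to construct the map on the four generating states of $V_{c}$, verify that it descends to a superalgebra homomorphism $i_{\sigma}\colon U(\mf{gl}_{1|1})\to A_{\sigma}(V_{c})$, show that it is surjective (which also yields uniqueness), and finally prove injectivity. I expect injectivity to be the only genuine obstacle; everything else is the ``direct computations'' referred to in the statement.

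\textbf{Construction, well-definedness, surjectivity.} The module $V_{c}$ is the vacuum Verma module for $\mf{ns}_{2}$, so by the PBW theorem it has a basis of ordered monomials in the negative modes of $\mathbf{L},\mathbf{J},\mathbf{G}^{+},\mathbf{G}^{-}$ applied to $\mathbf{1}^{c}$; in particular $V_{c}$ is freely strongly generated by these four states (of conformal weights $2,1,\tfrac32,\tfrac32$), and $A_{\sigma}(V_{c})$ is the Ramond-sector Zhu algebra. Put $\mathbf{Z}_{c}:=[\mathbf{L}]-\tfrac{c}{24}[\mathbf{1}^{c}]$. From the relation between the $*_{\sigma}$-product and the Ramond zero modes $L_{0},J_{0},G^{\pm}_{0}$ --- equivalently, a direct computation with the $O_{\sigma}$-relations of \S\ref{DefZhuSec} --- the $\mf{ns}_{2}$ (anti-)commutation relations taken at $n=m=0$ and $r=s=0$ give
$$[\mathbf{Z}_{c},[\mathbf{J}]]_{*_{\sigma}}=[\mathbf{Z}_{c},[\mathbf{G}^{\pm}]]_{*_{\sigma}}=0,\qquad [[\mathbf{J}],[\mathbf{G}^{\pm}]]_{*_{\sigma}}=\pm[\mathbf{G}^{\pm}],$$
$$\{[\mathbf{G}^{+}],[\mathbf{G}^{-}]\}_{*_{\sigma}}=2[\mathbf{L}]-\tfrac{c}{12}[\mathbf{1}^{c}]=2\mathbf{Z}_{c},\qquad\{[\mathbf{G}^{\pm}],[\mathbf{G}^{\pm}]\}_{*_{\sigma}}=0,$$
where the first line holds since $[L_{0},J_{0}]=0$ and $[L_{0},G^{\pm}_{0}]=0$ (from $[L_{n},G^{\pm}_{r}]=(\tfrac{n}{2}-r)G^{\pm}_{n+r}$), and the shift $-\tfrac{c}{24}[\mathbf{1}^{c}]$ absorbs the $-\tfrac{c}{12}[\mathbf{1}^{c}]$ coming from the term $\tfrac13(r^{2}-\tfrac14)C$ at $r=s=0$. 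These are exactly the defining relations of $\mf{gl}_{1|1}$ in the basis $Z,J,\Psi^{\pm}$, so there is a superalgebra homomorphism $i_{\sigma}$ with $i_{\sigma}(Z)=\mathbf{Z}_{c}$, $i_{\sigma}(J)=[\mathbf{J}]$, $i_{\sigma}(\Psi^{\pm})=[\mathbf{G}^{\pm}]$. It is surjective: modulo $O_{\sigma}(V_{c})$ every mode $a_{(-n-1)}$ with $n\ge1$ is rewritten through $*_{\sigma}$-multiplication by $[a]$ and lower-weight terms, so the PBW spanning set of $V_{c}$ shows that $A_{\sigma}(V_{c})$ is generated by $[\mathbf{L}],[\mathbf{J}],[\mathbf{G}^{\pm}],[\mathbf{1}^{c}]$. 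Uniqueness is then automatic, $Z,J,\Psi^{\pm}$ generating $U(\mf{gl}_{1|1})$.

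\textbf{Injectivity.} It remains to see that $i_{\sigma}$ has trivial kernel, i.e.\ that the universal $V_{c}$ imposes no further relations. The most direct route is module-theoretic: for generic $(h,j)$ the Ramond-sector (equivalently, $\sigma$-twisted) Verma module of central charge $c$ with a $G^{+}_{0}$-singular vector of $(L_{0},J_{0})$-weight $(h,j)$ has a $2$-dimensional top space, which, being the top space of a $\sigma$-twisted $V_{c}$-module --- every $\mf{ns}_{2}$-module of central charge $c$ is one, since $V_{c}$ is universal --- is a module over $A_{\sigma}(V_{c})$; restricting along $i_{\sigma}$ it is precisely the $2$-dimensional $\mf{gl}_{1|1}$-module $K_{h,j}$ on which $Z=h-\tfrac{c}{24}$, $J$ has eigenvalues $j$ and $j-1$, and $\Psi^{+},\Psi^{-}$ act by the standard nilpotent matrices. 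A short PBW computation in $U(\mf{gl}_{1|1})$ shows that it acts faithfully on $\bigoplus_{h,j}K_{h,j}$, whence $i_{\sigma}$ is injective and therefore an isomorphism.

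\textbf{On the obstacle, and an alternative.} Only this last step has real content. One may instead argue by associated graded: filter $U(\mf{gl}_{1|1})$ so that $Z,J,\Psi^{+},\Psi^{-}$ acquire the same weights as the images $[\mathbf{L}],[\mathbf{J}],[\mathbf{G}^{+}],[\mathbf{G}^{-}]$ carry in the natural filtration of $A_{\sigma}(V_{c})$; then $\mathrm{gr}\,i_{\sigma}\colon\mathbb{C}[Z,J]\otimes\Lambda[\Psi^{+},\Psi^{-}]\twoheadrightarrow\mathrm{gr}\,A_{\sigma}(V_{c})$ is surjective, whereas the standard $C_{2}$-type filtration argument (which carries over to the $\sigma$-twisted case) yields a further surjection $R_{V_{c}}:=V_{c}/C_{2}(V_{c})\twoheadrightarrow\mathrm{gr}\,A_{\sigma}(V_{c})$; since $V_{c}$ is freely generated, $R_{V_{c}}\cong\mathbb{C}[\bar{\mathbf{L}},\bar{\mathbf{J}}]\otimes\Lambda[\bar{\mathbf{G}}^{+},\bar{\mathbf{G}}^{-}]$ with the same graded dimensions as $\mathrm{gr}\,U(\mf{gl}_{1|1})$, and comparing dimensions degree by degree forces $\mathrm{gr}\,i_{\sigma}$, hence $i_{\sigma}$, to be an isomorphism.
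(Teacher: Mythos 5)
Your primary argument is correct, but it proceeds along a genuinely different route from what the paper intends (the paper gives no visible proof, only ``by direct computations,'' but a commented-out proof in the source uses the associated-graded/PBW-filtration approach). Your main injectivity argument is module-theoretic: you pull back the $A_{\sigma}(V_{c})$-action on the $2$-dimensional top spaces of Ramond Verma modules to $U(\mf{gl}_{1|1})$ and observe that $U(\mf{gl}_{1|1})$ acts faithfully on the direct sum over generic $(h,j)$. This works: the top space is $\spn\{v,G^{-}_{0}v\}$ with $Z,J$ diagonal, $\Psi^{-}$ the standard nilpotent, and $\Psi^{+}$ a nilpotent scaled by $2(h-\tfrac{c}{24})$ (your ``standard nilpotent'' is slightly imprecise on this last point, though it does not affect faithfulness), and a direct PBW computation shows any element of $U(\mf{gl}_{1|1})$ annihilating all such modules vanishes. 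The paper's commented route instead filters $V_{c}$ by the PBW filtration, uses an explicit description of $O_{\sigma}(V_{c})$ (the analogue of your allusion to rewriting $a_{(-n-1)}$ for $n\geq1$), and constructs an explicit inverse to $\mathrm{gr}\,i_{\sigma}$. Your argument is perhaps cleaner conceptually but relies on the representation theory of $\mf{gl}_{1|1}$ and the functor to top spaces; the paper's argument is self-contained and intrinsic to $V_{c}$.

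Your sketched \emph{alternative} at the end, however, has a real gap. Having two surjections $\mathbb{C}[Z,J]\otimes\Lambda[\Psi^{\pm}]\twoheadrightarrow\mathrm{gr}\,A_{\sigma}(V_{c})$ and $R_{V_{c}}\twoheadrightarrow\mathrm{gr}\,A_{\sigma}(V_{c})$ with sources of equal graded dimension only gives the same \emph{upper} bound on $\dim(\mathrm{gr}\,A_{\sigma}(V_{c}))_{n}$ twice; it does not force either surjection to be injective. One would need to know either that the $C_{2}$-type surjection from $R_{V_{c}}$ is itself an isomorphism, or (as in the paper's commented proof) to identify $\mathrm{gr}\,O_{\sigma}(V_{c})$ explicitly and build an inverse. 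As written, the alternative is not a proof; your primary module-theoretic argument is the one that closes the claim.
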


Then we obtain the following description of $A_{\sigma}(L_{c})$.

\begin{theom}\label{Kernel1}
When $c=c_{p,p'}$, the kernel of the natural projection
$$\pi_{\sigma}\colon A_{\sigma}(V_{c})\to A_{\sigma}(L_{c})$$
is generated by the coset $[\mathcal{N}_{p,p'}]$ of the singular vector 
$\mathcal{N}_{p,p'}\in V_{c}^{\bar{0}}$.
\end{theom}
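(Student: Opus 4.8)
The plan is to exploit the fact that the twisted Zhu algebra functor $A_\sigma(-)$ sends a quotient of vertex operator superalgebras to a quotient of associative superalgebras, and to identify the ideal generated by the image of the maximum proper ideal. Concretely, by Example \ref{maxiproper} the maximum proper ideal $I_c$ of $V_c$ is generated, as a $V_c$-module (equivalently, as a $\mf{ns}_2$-submodule and an ideal of the vertex operator superalgebra), by the single even singular vector $\mathcal{N}_{p,p'}$; and $L_c=V_c/I_c$. Since $A_\sigma(-)$ is right exact on the relevant category (it is a quotient by the span of $a\circ_\sigma b$-type elements, and quotienting commutes with quotienting), we get $A_\sigma(L_c)=A_\sigma(V_c)/A_\sigma(I_c)$, where $A_\sigma(I_c)$ denotes the image of $I_c$ under the canonical map $V_c\to A_\sigma(V_c)$. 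Thus $\Ker\pi_\sigma$ is exactly the image of $I_c$ in $A_\sigma(V_c)$, and the whole problem reduces to showing that this image, as a two-sided ideal of $A_\sigma(V_c)$, is generated by the single coset $[\mathcal{N}_{p,p'}]$.

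The key step is therefore: the image of $I_c$ in $A_\sigma(V_c)$ is the two-sided ideal generated by $[\mathcal{N}_{p,p'}]$. One inclusion is immediate, since $\mathcal{N}_{p,p'}\in I_c$ forces $[\mathcal{N}_{p,p'}]\in A_\sigma(I_c)$, and $A_\sigma(I_c)$ is a two-sided ideal. For the reverse inclusion, I would argue that every element of $I_c$ is, modulo the defining relations of the twisted Zhu algebra, a $*_\sigma$-product (on the left and right) of elements of $V_c$ with $\mathcal{N}_{p,p'}$. This is the standard mechanism behind Zhu-algebra computations: $I_c=\sum_{n\in\mathbb{Z}}\mathcal{N}_{p,p'}{}_{(n)}V_c$ (more precisely $I_c$ is spanned by iterated modes $a^{(1)}_{(n_1)}\cdots a^{(j)}_{(n_j)}\mathcal{N}_{p,p'}$ with $a^{(i)}\in V_c$, using that $I_c$ is a vertex ideal generated by $\mathcal{N}_{p,p'}$ and that $\mathcal{N}_{p,p'}$ is a singular vector so only lowering modes are needed), and under the projection $V_c\to A_\sigma(V_c)$ each such mode action is turned into a left or right $*_\sigma$-multiplication by the class of $a^{(i)}$. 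One then invokes the general lemma (due to Zhu in the untwisted case, extended by Kac--Wang and by Dong--Li--Mason, De Sole--Kac, etc. in the twisted/super setting, cf.\ the references cited in the introduction) that for a vertex operator superalgebra $V$ and a vertex ideal $I$, the image of $I$ in $A_\sigma(V)$ coincides with the two-sided ideal generated by the images of any set of generators of $I$.

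The main obstacle is the bookkeeping needed to make the previous paragraph precise in the $\sigma$-twisted (Ramond) super setting: one must check that the formulas expressing $o_\sigma(a_{(n)}b)$ in terms of $o_\sigma(a)$ and $o_\sigma(b)$ (the twisted analogue of Zhu's $a*b$ and $b*a$ formulas, together with the vanishing of $a\circ_\sigma b$ in the Zhu quotient) are available with the grading shifts forced by the Ramond sector, and that $\mathcal{N}_{p,p'}$ being even and singular means no subtleties arise from the $\mathbb{Z}_2$-grading or from fermionic half-integer modes. Granting the structural results of \cite{de2006finite} and the other cited constructions of twisted Zhu algebras, this is routine; I expect the proof in the paper to simply cite the appropriate general statement and apply it to the generator $\mathcal{N}_{p,p'}$ furnished by Example \ref{maxiproper}, with the identification $A_\sigma(V_c)\simeq U(\mf{gl}_{1|1})$ from Proposition \ref{Prp41} used only to phrase the final answer (Theorem \ref{Main3}(1)), not in the proof of Theorem \ref{Kernel1} itself.
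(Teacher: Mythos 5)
Your proposal is correct and follows essentially the same route as the paper's proof: the paper reduces $\Ker\pi_{\sigma}=I_{c}+O_{\sigma}(V_{c})$ to the two-sided ideal generated by $[\mathcal{N}_{p,p'}]$ by combining Example~\ref{maxiproper} (which gives $I_{c}=U\big((\mf{ns}_{2})_{-}\big)\mathcal{N}_{p,p'}$) with a super-twisted analog of Zhu's Lemma~2.1.2 and an induction on the PBW filtration of $U\big((\mf{ns}_{2})_{-}\big)$, which is precisely the ``descent'' mechanism you describe; the only cosmetic difference is that the paper carries out the induction explicitly rather than appealing to a named blanket lemma.
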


\begin{proof}
Let $\mathcal{I}_{c,\sigma}$ be the two-sided ideal of $A_{\sigma}(V_{c})$
which is generated by $[\mathcal{N}_{p,p'}]$.
Since $\mathcal{I}_{c,\sigma}\subseteq\Ker\pi_{\sigma}=I_{c}+O_{\sigma}(V_{c})$ is clear, it sufficies to prove that
$\mathcal{I}_{c,\sigma}\supseteq I_{c}+O_{\sigma}(V_{c})$.
Since $I_{c}=U\big((\mf{ns}_{2})_{-}\big)\mathcal{N}_{p,p'}$ by Example \ref{maxiproper},
it is proved by a super analog of \cite[Lemma 2.1.2]{zhu1996modular} (see e.g.\,\cite[Lemma 3.1 (i)]{dong2006twisted}) and by induction
with respect to the PBW filtration of $U\big((\mf{ns}_{2})_{-}\big)$.
\end{proof}

Then Theorem \ref{Main3} (1) follows from Proposition \ref{Prp41} and Theorem \ref{Kernel1}.

\begin{ex}
By the PBW theorem, there exist $P_{1}, P_{2}\in \mathbb{C}[x,y]$ such that
$$\phi_{c}:=i_{\sigma}^{-1}\big([\mathcal{N}_{p,p'}]\big)=P_{1}(Z,J)+P_{2}(Z,J)\Psi^{-}\Psi^{+}
\in U(\mf{gl}_{1|1})^{\bar{0}}.$$
We give some examples of $\phi_{c}$ explicitly.
For brevity, we write $v\propto w$ if $v$ and $w$ are proportional.
\begin{enumerate}
\item
When $(p,p')=(4,1)$, we have $c_{4,1}=\frac{3}{2}$ and
\begin{align*}
\mathcal{N}_{4,1}&\propto\Biggl(10J_{-3}-3L_{-3}
+3G^{+}_{-\frac{3}{2}}G^{-}_{-\frac{3}{2}}
-12L_{-2}J_{-1}
+8J_{-1}^{3}
\Biggr)\mathbf{1}^{c}
\end{align*}
(see \cite[p.72]{eholzer1997unitarity}).
By some computations,
we obtain
$$\phi_{\frac{3}{2}}\propto (4J-1)\bigl(J(4J+1)-6Z\bigr)-6\Psi^{-}\Psi^{+}.$$
\item
When $(p,p')=(2,3)$, we have $c_{2,3}=-6$ and
\begin{align*}
\mathcal{N}_{2,3}&\propto\Biggl(-10J_{-3}-6L_{-3}
+6G^{+}_{-\frac{3}{2}}G^{-}_{-\frac{3}{2}}
+6L_{-2}J_{-1}
+J_{-1}^{3}
\Biggr)\mathbf{1}^{c}
\end{align*}
(see \cite[p.72]{eholzer1997unitarity}).
By some computations,
we obtain
$$\phi_{-6}\propto (J+1)\bigl(J(J-1)+6Z\bigr)-6\Psi^{-}\Psi^{+}$$
\item
When $(p,p')=(3,2)$, we have $c_{3,2}=-1$ and
\begin{align*}
\mathcal{N}_{3,2}\propto\Biggl(
&
42J_{-4}
+24L_{-4}
+27J_{-2}J_{-2}
-84J_{-3}J_{-1}\\
&
-6G^{+}_{-\frac{3}{2}}G^{-}_{-\frac{5}{2}}
+6G^{+}_{-\frac{5}{2}}G^{-}_{-\frac{3}{2}}
-32L_{-2}L_{-2}\\
&
-36L_{-3}J_{-1}
+36J_{-1}G^{+}_{-\frac{3}{2}}G^{-}_{-\frac{3}{2}}
+12L_{-2}J_{-1}^{2}
+9J_{-1}^{4}
\Biggr)\mathbf{1}^{c}
\end{align*}
(see \cite[p.72]{eholzer1997unitarity}).
By some computations,
we obtain
\begin{align*}\phi_{-1}\propto
\big((6J+1)(6J+5)-48Z\big)\big((6J-1)(6J-5)+96Z\big)-72^{2}J\Psi^{-}\Psi^{+}.
\end{align*}
\end{enumerate}
\end{ex}


In the same way as \cite[Theorem 7.1 and 7.2]{Ad99},
we obtain the classification of simple $\sigma$-twisted $L_{c}$-modules
by the Kazama--Suzuki coset construction\footnote{We can also obtain the classification by using the equivalence of categories in Appendix \ref{SF}.}.
Then, by a generalization of \cite[Theorem 2.2.2]{zhu1996modular}
(see e.g.\,\cite[Corollary 5.1.8]{xu1998introduction}, \cite[Theorem 6.5]{dong2006twisted}), we also obtain 
the classification of finite-dimensional simple $A_{\sigma}(L_{c})$-modules as follows.

\begin{prp}\label{classification}
Let $c=c_{p,p'}$.
For $(z,j)\in\mathbb{C}^{2}$, let $\mathcal{L}_{z,j}$ be the simple highest weight $\mf{gl}_{1|1}$-module (with respect to the standard Borel subalgebra $\mf{b}=\mathbb{C}Z\oplus\mathbb{C}J\oplus\mathbb{C}\Psi^{+}$)
of highest weight $zZ^{*}+jJ^{*}$, where $(Z^{*},J^{*})$ is the dual basis of $(Z,J)$.
Then the action of $U(\mf{gl}_{1|1})$ on $\mathcal{L}_{z,j}$ factors through 
that of $A_{\sigma}(L_{c})$ if and only if
the pair $(z,j)$ lies in the disjoint union of the following sets:
\begin{align}
&\label{P01}\left\{\left(z_{r,\theta},\,\mu^{\sf}_{r,\theta}+\frac{1}{2}\right)
\,\Bigg|\,(r,0)\in\mathcal{I}_{\sf KW},\,0\leq\theta\leq r-1,\,\theta\in\mathbb{Z}\right\},\\
&\label{P02}\left\{\left(\frac{(ar-s)^{2}-\mu^{2}}{4a},\,
\mu+\frac{1}{2}\right)\,\Bigg|\,(r,s)\in\mathcal{I}_{\sf BPZ},\,\mu\in\mathbb{C}\right\},
\end{align}
where $z_{r,\theta}:=a(\theta+1)(r-1-\theta)$ and $\mu^{\sf}_{r,\theta}:=a\big((r-1-\theta)-(\theta+1)\big)$.
\end{prp}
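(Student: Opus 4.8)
The plan is to transport the classification of simple (twisted) $L_c$-modules—which is available via the Kazama--Suzuki coset construction as in \cite{Ad99}—across the correspondence with simple highest weight $\mf{gl}_{1|1}$-modules established by Proposition \ref{Prp41}. By the generalization of \cite[Theorem 2.2.2]{zhu1996modular} to the twisted setting, simple $A_\sigma(L_c)$-modules are in bijection with simple $\sigma$-twisted (i.e. Ramond-sector) positive-energy $L_c$-modules, and under $i_\sigma$ the finite-dimensional simple $A_\sigma(V_c)$-modules are exactly the simple highest weight $\mf{gl}_{1|1}$-modules $\mathcal{L}_{z,j}$. So the task reduces to: (a) enumerate the simple $\sigma$-twisted $L_c$-modules; (b) for each one, compute the $(L_0,J_0)$-eigenvalue $(z,j)$ on its lowest-weight space (the $\mf{gl}_{1|1}$ ``top''); and (c) check that $[\mathcal{N}_{p,p'}]$ acts as zero on precisely these, i.e. that $\mathcal{L}_{z,j}$ descends to $A_\sigma(L_c)$ iff $(z,j)$ appears in the enumerated list. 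The ``only if'' direction follows because $A_\sigma(L_c)=A_\sigma(V_c)/\mathcal{I}_{c,\sigma}$ by Theorem \ref{Kernel1}; the ``if'' direction follows because each enumerated $(z,j)$ genuinely arises as the top of an honest simple $\sigma$-twisted $L_c$-module, hence $\mathcal{I}_{c,\sigma}$ (in particular $[\mathcal{N}_{p,p'}]$ and all its products) kills it.

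First I would set up the $\sigma$-twisted picture. Applying the spectral-flow functor $(?)^\theta$ from Appendix \ref{SF} (which interpolates between the Neveu--Schwarz and Ramond sectors at half-integer shifts) to the simple modules listed in Remark \ref{Main1ex}, one obtains the simple $\sigma$-twisted $L_c$-modules; alternatively, as the footnote indicates, one runs the argument of \cite[Theorem 7.1 and 7.2]{Ad99} directly with the coset construction. Concretely, the two families in Remark \ref{Main1ex} yield: from the atypical modules $\mathcal{L}(r,s)^\theta$ a discrete family, and from the typical modules $\mathcal{L}_{r,s;j}$ a one-parameter family. For the discrete family, spectral flow by a half-integer amount produces Ramond modules whose top is a finite-dimensional $\mf{gl}_{1|1}$-module only for the finitely many $\theta$ with $0\le\theta\le r-1$; outside that range the relevant $G^\pm_0$-string is infinite and the module is not a highest-weight $\mf{gl}_{1|1}$-module. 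This is where the set \eqref{P01} with $z_{r,\theta}=a(\theta+1)(r-1-\theta)$ and $\mu_{r,\theta}=a\big((r-1-\theta)-(\theta+1)\big)$ comes from, and the parameters are exactly read off from the $(L_0,J_0)$-values one gets by combining Lemma \ref{weyl}, the Sugawara formula, and the spectral-flow shift formulas $L_0\mapsto L_0+\theta J_0 + \frac{c}{6}\theta^2$, $J_0\mapsto J_0+\frac{c}{3}\theta$ (with $c=3(1-2a)$). For the continuous family, the relaxed/typical modules $\mathcal{M}_{r,s;j}$ have a free $G^\pm_0$-action on a one-dimensional top, so spectral flow to the Ramond sector gives genuinely infinite-dimensional $\mf{gl}_{1|1}$-modules in general—except that the simple highest weight $\mathcal{L}_{z,j}$ is finite-dimensional precisely when the highest weight is ``atypical'' for $\mf{gl}_{1|1}$, i.e. when the Casimir-type obstruction vanishes; matching the $L_0$-eigenvalue $\Delta(j_{r,s})-aj^2$ against $z$ and the $J_0$-eigenvalue $2aj$ against $j-\tfrac12=\mu$ (after the half-unit Ramond twist), and using $j_{r,s}=\frac{r-1}{2}-\frac{s}{2a}$, reproduces the parametrization $z=\frac{(ar-s)^2-\mu^2}{4a}$ of \eqref{P02}.

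The main obstacle I anticipate is twofold and essentially bookkeeping-heavy rather than conceptual. First, one must pin down exactly which $\sigma$-twisted simple $L_c$-modules have a \emph{finite-dimensional} lowest-energy space, since only those correspond to $\mf{gl}_{1|1}$-highest-weight modules $\mathcal{L}_{z,j}$ and hence to finite-dimensional simple $A_\sigma(L_c)$-modules; this requires a careful analysis of the Ramond-sector $G^+_0G^-_0$ action and of when spectral flow of an atypical Neveu--Schwarz module closes up into a finite string, which is where the constraints $0\le\theta\le r-1$ and $(r,s)\in\mathcal{I}_{\sf BPZ}$ enter. Second, the eigenvalue computations must be done with the correct normalization: $i_\sigma(Z)=[\mathbf{L}]-\frac{c}{24}[\mathbf{1}^c]$ carries a shift, and the twisted-module grading conventions (Ramond ground-state energy $c/24$, half-integer moding of $G^\pm$ becoming integer) must be tracked so that the final $(z,j)$ land exactly on the stated $z_{r,\theta}$, $\mu_{r,\theta}$, and $\frac{(ar-s)^2-\mu^2}{4a}$. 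Once these are in place, the ``if'' direction is immediate (honest modules exist, so the ideal acts trivially) and the ``only if'' direction is Theorem \ref{Kernel1} plus the fact that a simple $A_\sigma(L_c)$-module is a simple $A_\sigma(V_c)$-module annihilated by $[\mathcal{N}_{p,p'}]$, so it must be one of the $\mathcal{L}_{z,j}$ with $(z,j)$ in the list; this closes the proof.
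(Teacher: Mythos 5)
Your overall route coincides with the paper's: the proposition is stated in the paper with no \verb|proof| environment, and the justification is exactly the two preceding sentences — one classifies simple $\sigma$-twisted $L_c$-modules via the Kazama--Suzuki coset as in \cite{Ad99}, and then reads off the $\mathcal{L}_{z,j}$-parameters of their lowest-energy spaces via the twisted analogue of Zhu's bijection. Your steps (a)--(c), together with the remark that the ``if'' direction is witnessed by honest $L_c$-modules and the ``only if'' direction uses Theorem \ref{Kernel1} together with the bijection, match that argument.

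However, one strand of your fleshing-out is genuinely wrong and would not survive an attempt to carry it out. You repeatedly invoke finite- versus infinite-dimensionality of $\mathcal{L}_{z,j}$ as the criterion cutting down the discrete parameter $\theta$ to the range $0\le\theta\le r-1$, and as the reason the typical family contributes ``genuinely infinite-dimensional $\mf{gl}_{1|1}$-modules in general.'' But every simple highest weight $\mf{gl}_{1|1}$-module with respect to the Borel $\mathbb{C}Z\oplus\mathbb{C}J\oplus\mathbb{C}\Psi^{+}$ is a quotient of a two-dimensional Verma module (since $(\Psi^{-})^2=0$), hence has dimension $1$ or $2$; there is no infinite-dimensional $\mathcal{L}_{z,j}$, and likewise ``$\mf{gl}_{1|1}$-atypicality'' controls whether $\dim\mathcal{L}_{z,j}$ equals $1$ or $2$, not whether it is finite. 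So finite-dimensionality of the $\mf{gl}_{1|1}$-module gives you no information at all and cannot be the source of the constraint $0\le\theta\le r-1$. What actually produces this constraint is the classification of simple $\sigma$-twisted $L_c$-modules itself: only the half-integer spectral flows with $\theta$ in that range yield Ramond-sector simple modules that are not already accounted for in the typical family \eqref{P02} (or equivalently, the modules are pairwise non-isomorphic precisely in that range), and this fact is imported from the Adamović-style coset argument, not derived from dimension counting on the $\mf{gl}_{1|1}$-side. The remainder of your computation — applying the spectral-flow shift formulas to the $(L_0,J_0)$-eigenvalues of Lemma \ref{weyl} and the normalization $i_\sigma(Z)=[\mathbf{L}]-\frac{c}{24}[\mathbf{1}^c]$ — is the correct bookkeeping and does reproduce $z_{r,\theta}$, $\mu_{r,\theta}$, and $\frac{(ar-s)^2-\mu^2}{4a}$.
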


\begin{rem}
When $p'=1$, the set (\ref{P01}) corresponds to 
the class $\mathrm{P}^{-}_{0}$ unitary minimal series
of central charge $\tilde{c}=\frac{c}{3}$ in \cite{boucher1986determinant} and the set (\ref{P02}) is empty.
\end{rem}

As a consequence, we obtain the following.

\begin{cor}\label{finiteness}
For $c=c_{p,p'}$, the following conditions are all equivalent:
\begin{enumerate}
\item $p'=1$,
\item $A_{\sigma}(L_{c})$ is finite-dimensional,
\item every finite-dimensional $A_{\sigma}(L_{c})$-module is completely reducible.
\end{enumerate}
\end{cor}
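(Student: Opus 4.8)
The plan is to establish the chain of implications $(1)\Rightarrow(2)\Rightarrow(3)\Rightarrow(1)$, using the classification in Proposition \ref{classification} as the main input. Throughout, recall that finite-dimensional simple $A_{\sigma}(L_{c})$-modules are (by the generalization of Zhu's theorem quoted before the proposition) in bijection with the pairs $(z,j)$ lying in the union of the sets (\ref{P01}) and (\ref{P02}), where the corresponding module is the simple $\mf{gl}_{1|1}$-module $\mathcal{L}_{z,j}$; note each $\mathcal{L}_{z,j}$ is either one- or two-dimensional, according to whether $\Psi^{-}\Psi^{+}$ acts as zero on the highest weight line or not, so the cardinality of this parameter set controls finite-dimensionality of the semisimplification of $A_{\sigma}(L_{c})$.

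First I would prove $(1)\Rightarrow(2)$. When $p'=1$ the set (\ref{P02}) is empty (as noted in the Remark following Proposition \ref{classification}, since $\mathcal{I}_{\sf BPZ}$ is empty for $p'=1$), and the set (\ref{P01}) is finite because $(r,0)\in\mathcal{I}_{\sf KW}$ forces $1\leq r\leq p-1$ and then $0\leq\theta\leq r-1$ with $\theta\in\mathbb{Z}$ gives only finitely many choices. Hence there are only finitely many finite-dimensional simple $A_{\sigma}(L_{c})$-modules. To upgrade this to finite-dimensionality of $A_{\sigma}(L_{c})$ itself, I would invoke that $A_{\sigma}(L_{c})$ is, by Theorem \ref{Main3}(1), the quotient $U(\mf{gl}_{1|1})/\langle\phi_{c}\rangle_{\sf ideal}$, and that $\phi_{c}\in U(\mf{gl}_{1|1})^{\bar 0}$ has the form $P_{1}(Z,J)+P_{2}(Z,J)\Psi^{-}\Psi^{+}$ with $P_{1}$ of positive degree; since $Z$ is central, quotienting by the ideal generated by such an element cuts the polynomial algebra $\mathbb{C}[Z,J]$ down in a way that, combined with the relation, makes the quotient module-finite over $\mathbb{C}[Z]$, and then one checks $Z$ itself acts with finitely many eigenvalues — indeed the finitely many $z$-values from (\ref{P01}) are exactly the roots of the relevant polynomial — so $A_{\sigma}(L_{c})$ is finite-dimensional. (Alternatively, $(1)\Rightarrow(2)$ also follows from the known fact that for $p'=1$ the VOSA $L_{c}$ is rational and $C_2$-cofinite, whence $A_{\sigma}(L_{c})$ is finite-dimensional; but I would prefer the self-contained argument via $\phi_c$.)

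Next, $(2)\Rightarrow(3)$ is immediate from the structure of $A_{\sigma}(L_{c})$: a finite-dimensional associative superalgebra need not be semisimple, but here I would argue directly that the finite-dimensional quotient $U(\mf{gl}_{1|1})/\langle\phi_c\rangle_{\sf ideal}$ has the property that all its finite-dimensional modules are completely reducible, because — when (\ref{P02}) is empty and (\ref{P01}) consists of finitely many pairs with distinct central character $z$, as happens precisely when $p'=1$ — the algebra splits as a product of blocks each of which is a matrix superalgebra over $\mathbb{C}$ (the distinctness of the $z$-eigenvalues separates the blocks, and on each the odd generators act faithfully or trivially in a semisimple manner). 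So in fact it is cleaner to prove $(2)\Rightarrow(1)$ and $(3)\Rightarrow(1)$ simultaneously by contraposition: if $p'>1$, then $\mathcal{I}_{\sf BPZ}\neq\emptyset$, so (\ref{P02}) is a non-empty family parametrized by a free complex parameter $\mu$, giving an infinite (indeed one-parameter) family of pairwise non-isomorphic finite-dimensional simple $A_{\sigma}(L_{c})$-modules; this already forces $\dim A_{\sigma}(L_{c})=\infty$, killing (2), and moreover along this $\mu$-family the modules degenerate (collide in central character at special $\mu$), producing a non-split self-extension, so (3) fails as well. The one technical point to check carefully here is that for $p'>1$ one genuinely has $\mathcal{I}_{\sf BPZ}\neq\emptyset$: take $m=1$, $n=1$, which lies in $\mathcal{I}_{\sf KW}$ (as $p\geq 2$ gives $1\leq m\leq p-1$ and $p'\geq 2$ gives $0\leq n\leq p'-1$) and satisfies $n\neq 0$ and $p'\cdot 1+p\cdot 1=p+p'\leq pp'$ whenever $p,p'\geq 2$, so indeed $(1,1)\in\mathcal{I}_{\sf BPZ}$.

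The main obstacle is the implication $(2)\Rightarrow(1)$, or rather making rigorous the claim that the one-parameter family (\ref{P02}) forces infinite-dimensionality: one must be sure that these infinitely many pairs $(z,j)$ really do yield pairwise non-isomorphic simple $A_{\sigma}(L_c)$-modules (not, say, the same module counted redundantly), which comes down to checking that the map $\mu\mapsto\big(\tfrac{(ar-s)^2-\mu^2}{4a},\mu+\tfrac12\big)$ is injective — it visibly is, since the second coordinate already determines $\mu$ — and that distinct highest weights give non-isomorphic simple $\mf{gl}_{1|1}$-modules, which is standard highest-weight theory for $\mf{gl}_{1|1}$. Once that is pinned down, counting dimensions of $A_{\sigma}(L_c)$ against the cardinality of its simple-module parameter space closes the loop, and the equivalence of all three conditions follows.
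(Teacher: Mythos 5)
Your overall strategy (reading the size of the simple spectrum off Proposition \ref{classification} and then exhibiting a failure of complete reducibility when $p'>1$) is in the same spirit as the paper's proof, but as written the argument has two genuine holes.

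First, the logical loop does not close. You establish $(1)\Rightarrow(2)$, $\neg(1)\Rightarrow\neg(2)$ and $\neg(1)\Rightarrow\neg(3)$, which gives $(1)\Leftrightarrow(2)$ and $(3)\Rightarrow(1)$, but you still owe $(1)\Rightarrow(3)$ (equivalently $(2)\Rightarrow(3)$). Your direct argument for that implication rests on the claim that, for $p'=1$, the pairs in (\ref{P01}) have pairwise distinct $z$-eigenvalues, which then separate $A_{\sigma}(L_c)$ into matrix blocks. That claim is false: for $(p,p')=(4,1)$ the pairs with $\theta=r-1$ all have $z_{r,r-1}=a\,r\cdot 0=0$, so $z=0$ occurs with three different $j$-values $\tfrac14,0,-\tfrac14$, and $z=\tfrac12$ also occurs twice. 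So the block-separation argument breaks down. The paper instead gets finite-dimensionality \emph{and} semisimplicity of $A_{\sigma}(L_c)$ at $p'=1$ from the $\sigma$-twisted regularity of $L_c$ (the route you mention parenthetically), which is the clean way to discharge $(1)\Rightarrow(2)$ and $(1)\Rightarrow(3)$; your ``self-contained'' argument via the shape of $\phi_c$ is also not substantiated at the step asserting module-finiteness over $\mathbb{C}[Z]$.

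Second, in $\neg(1)\Rightarrow\neg(3)$ the phrase ``producing a non-split self-extension'' names no module and gives no reason the extension descends to $A_{\sigma}(L_c)$ rather than living only over $U(\mf{gl}_{1|1})$ --- which is exactly the point that needs checking. The paper does this concretely: it takes a two-dimensional Verma $\mf{gl}_{1|1}$-module $\mathcal{M}_{0,j}$ with its non-split sequence $0\to\mathcal{L}_{0,j-1}\to\mathcal{M}_{0,j}\to\mathcal{L}_{0,j}\to 0$, notes that on this module $\Psi^+$ acts as zero so $\phi_c$ reduces to $P_1(0,J)$, and then uses Proposition \ref{classification} to see that both $j$ and $j-1$ are roots of $P_1(0,\,\cdot\,)$, so $\phi_c$ (hence the whole two-sided ideal $\langle\phi_c\rangle_{\sf ideal}$) annihilates $\mathcal{M}_{0,j}$, producing a genuine indecomposable non-simple $A_{\sigma}(L_c)$-module. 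You should carry out exactly this verification; this is also where your (correct) observation that $(1,1)\in\mathcal{I}_{\sf BPZ}$ for $p'\geq 2$ earns its keep, since one needs both adjacent $j$-values to appear in the union of (\ref{P01}) and (\ref{P02}).
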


\begin{proof}
First the implication $(1)\Rightarrow(2)$ follows from 
the $\sigma$-twisted regularity of $L_{c}$ (cf.\,\cite[Theorem 8.1]{Ad01})
and a natural generalization of \cite[Theorem 3.8]{li1999some}.
Next the implication $(2)\Rightarrow(3)$ follows from Proposition \ref{classification}.
At last we prove that $(3)$ implies $(1)$.
We suppose that $p'\neq1$.
Let $\mathcal{M}_{z,j}$ be the Verma module of $\mf{gl}_{1|1}$ of highest weight 
$(z,j)\in\mathbb{C}^{2}$.
By Proposition \ref{classification} and
the existence of a non-split exact sequence $0\to\mathcal{L}_{0,j-1}\to\mathcal{M}_{0,j}\to\mathcal{L}_{0,j}\to0$
of $\mf{gl}_{1|1}$-modules, the element $\phi_{c}$ acts trivially on 
the indecomposable non-simple $\mf{gl}_{1|1}$-module $\mathcal{M}_{0,\mu_{r,r-1}+\frac{1}{2}}$. Thus $(3)$
implies $(1)$.
\end{proof}

\subsubsection{The $\id$-twisted (=\,Neveu--Schwarz) case}\label{NScase}

The following isomorphism seems to be well known (e.g.\,\cite[Remark 1.1]{Ad99})
and is proved in the same way as \cite[Lemma 3.1]{kac1994vertex}.

\begin{prp}\label{zhu}
For $c\in\mathbb{C}$, there exists a unique purely even $\mathbb{Z}_{2}$-graded algebra isomomorphism
$$i_{\id}\colon\mathbb{C}[{\sf h},{\sf q}]\overset{\simeq}{\longrightarrow} A_{\id}(V_{c})$$
such that $i_{\id}({\sf h})=[\mathbf{L}]$ and $i_{\id}({\sf q})=[\mathbf{J}].$
\end{prp}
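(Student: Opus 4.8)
The plan is to adapt Zhu's computation of a Zhu algebra from strong generators, in the form carried out for vertex operator superalgebras in \cite[Lemma 3.1]{kac1994vertex}. First I would record that, by Lemma \ref{weyl} with $m=1$ together with the remark following it, $V_c\simeq\mathcal{V}(1)$ is freely and strongly generated, as a vertex superalgebra, by the conformal vector $\mathbf{L}=L_{-2}\mathbf{1}^{c}$, the Heisenberg field $\mathbf{J}=J_{-1}\mathbf{1}^{c}$, and the two odd fields $\mathbf{G}^{\pm}=G^{\pm}_{-3/2}\mathbf{1}^{c}$ (note $G^{\pm}_{-1/2}\mathbf{1}^{c}=0$, a vacuum axiom already present in $\mathcal{V}(1)$). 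Then, invoking the standard reduction in the Neveu--Schwarz ($\id$-twisted) Zhu formalism --- every vector of $V$ is congruent modulo $O_{\id}(V)$ to a noncommutative $*$-polynomial in the classes of a strong generating set, and this reduction preserves $\mathbb{Z}_{2}$-parity --- the algebra $A_{\id}(V_c)$ is generated by $[\mathbf{L}],[\mathbf{J}],[\mathbf{G}^{+}],[\mathbf{G}^{-}]$.

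Next I would eliminate the odd generators. Since $\mathbf{G}^{\pm}$ has half-integer conformal weight $\tfrac{3}{2}$, it carries no zero mode on a Neveu--Schwarz module; concretely this means $\mathbf{G}^{\pm}$ is a nonzero multiple of $\mathbf{G}^{\pm}\circ\mathbf{1}^{c}$ and hence lies in $O_{\id}(V_c)$, so $[\mathbf{G}^{\pm}]=0$ in $A_{\id}(V_c)$. Combined with the parity-preservation of the reduction above, this shows that every odd vector of $V_c$ lies in $O_{\id}(V_c)$; therefore $A_{\id}(V_c)$ is purely even and is generated by the two even classes $[\mathbf{L}]$ and $[\mathbf{J}]$. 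These two commute: applying the superalgebra identity $[a]*[b]-(-1)^{|a||b|}[b]*[a]=\sum_{i\ge0}\binom{\Delta_{a}-1}{i}[a_{(i)}b]$ with $a=\mathbf{J}$ (so that its conformal weight $\Delta_{a}=1$) and $b=\mathbf{L}$ gives $[\mathbf{J}]*[\mathbf{L}]-[\mathbf{L}]*[\mathbf{J}]=[J_{0}\mathbf{L}]=\big[\,[J_{0},L_{-2}]\mathbf{1}^{c}\,\big]=0$. Hence there is a unique (necessarily purely even, $\mathbb{Z}_{2}$-graded) algebra homomorphism $i_{\id}\colon\mathbb{C}[{\sf h},{\sf q}]\to A_{\id}(V_c)$ with $i_{\id}({\sf h})=[\mathbf{L}]$ and $i_{\id}({\sf q})=[\mathbf{J}]$, and it is surjective by the preceding step.

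Finally I would establish injectivity of $i_{\id}$ via a separating family of modules. For each $(h,j)\in\mathbb{C}^{2}$ the Neveu--Schwarz Verma module $\mathcal{M}_{h,j,c}$ is an $\id$-twisted $V_c$-module whose lowest conformal weight space is the line $\mathbb{C}\ket{h,j,c}^{\sf sc}$; by Zhu's theory in its twisted form (e.g.\ the super analog of \cite[Theorem 2.2.2]{zhu1996modular}), this line is an $A_{\id}(V_c)$-module on which $[\mathbf{L}]$ acts as $L_{0}$ (i.e.\ by $h$) and $[\mathbf{J}]$ acts as $J_{0}$ (i.e.\ by $j$). Thus the composite $\mathbb{C}[{\sf h},{\sf q}]\xrightarrow{i_{\id}}A_{\id}(V_c)\to\mathbb{C}$ is evaluation at $(h,j)$, so any element of $\Ker i_{\id}$ is a polynomial vanishing on all of $\mathbb{C}^{2}$, hence $0$; therefore $i_{\id}$ is an isomorphism. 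I expect the only real work to be in the first two paragraphs: verifying, with the correct signs for the odd fields and the correct normalization of the half-integer-weight $\circ$-product, that $A_{\id}(V_c)$ is generated by the classes of the strong generators and that its odd part vanishes --- the commutativity check and the Verma-module separation are then routine.
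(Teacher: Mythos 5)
Your proof is correct and carries out the argument the paper defers to when it says the result ``is proved in the same way as [kac1994vertex, Lemma 3.1]'': strong generation reduces $A_{\id}(V_c)$ to the classes of the four generators, the odd classes $[\mathbf{G}^{\pm}]$ vanish (your computation $\mathbf{G}^{\pm}\underset{\id}{\circ}\mathbf{1}^{c}=\mathbf{G}^{\pm}$ is exact, not merely proportional, since $G^{\pm}_{-1/2}\mathbf{1}^{c}=0$), commutativity follows from $J_{0}\mathbf{L}=0$, and injectivity is checked. The only cosmetic difference from the standard Kac--Wang treatment is your use of the separating family of Verma modules $\mathcal{M}_{h,j,c}$ for injectivity rather than a PBW-filtration comparison; both are routine and valid, so this is essentially the same approach.
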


Then we obtain a similar result on the structure of $A_{\id}(L_{c})$.

\begin{theom}
When $c=c_{p,p'}$, the kernel of the natural projection
$$\pi_{\id}\colon A_{\id}(V_{c})\to A_{\id}(L_{c})$$
is generated by the cosets
$[\mathcal{N}_{p,p'}]$
and $\bigl[G^{+}_{-\frac{1}{2}}G^{-}_{-\frac{1}{2}}\mathcal{N}_{p,p'}\bigr]$
in $A_{\id}(V_{c})$.
\end{theom}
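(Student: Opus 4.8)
The plan is to mimic the proof of Theorem \ref{Kernel1}, adapting it to the $\id$-twisted (Neveu--Schwarz) setting. As in that proof, write $\mathcal{I}_{c,\id}$ for the two-sided ideal of $A_{\id}(V_{c})$ generated by the cosets $[\mathcal{N}_{p,p'}]$ and $[G^{+}_{-\frac{1}{2}}G^{-}_{-\frac{1}{2}}\mathcal{N}_{p,p'}]$. The inclusion $\mathcal{I}_{c,\id}\subseteq\Ker\pi_{\id}=I_{c}+O_{\id}(V_{c})$ is immediate since both generators come from vectors in the maximum proper ideal $I_{c}$. So the content is the reverse inclusion $\mathcal{I}_{c,\id}\supseteq I_{c}+O_{\id}(V_{c})$, i.e.\ that every coset $[u]$ with $u\in I_{c}$ already lies in $\mathcal{I}_{c,\id}$.

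First I would recall from Example \ref{maxiproper} that $I_{c}=U\big((\mf{ns}_{2})_{-}\big)\mathcal{N}_{p,p'}$, and set up the induction on the PBW filtration of $U\big((\mf{ns}_{2})_{-}\big)$, exactly as in Theorem \ref{Kernel1}. The key difference is parity: in the $\id$-twisted Zhu algebra only the even subalgebra survives, so the odd modes $G^{\pm}_{r}$ ($r\in\mathbb{Z}+\tfrac12$) do not individually give elements of $A_{\id}(V_{c})$. Using the super analog of Zhu's Lemma 2.1.2 (as in \cite[Lemma 3.1]{dong2006twisted} adapted to the $\id$-grading), a vector of the form $X_{(-m)}\mathcal{N}_{p,p'}$ for an even generator $X\in\{L,J\}$ and $m\geq 1$ reduces modulo $O_{\id}(V_{c})$ to a combination of products $[X]*[w]$, $[w]*[X]$ with $w\in I_{c}$ of lower filtration degree, so the even modes are handled by the usual induction. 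The obstruction is the odd modes: one must show that the coset of $G^{+}_{-r}G^{-}_{-s}\mathcal{N}_{p,p'}$ (and more generally any even monomial in the $G^{\pm}$'s applied to $\mathcal{N}_{p,p'}$) lies in $\mathcal{I}_{c,\id}$. The point is that two odd modes can be traded for one even mode via the $\mf{ns}_{2}$ relations $[G^{+}_{r},G^{-}_{s}]=2L_{r+s}+(r-s)J_{r+s}+\cdots$, together with the fact that, modulo $O_{\id}(V_{c})$, the action of a pair $G^{+}_{(a)}G^{-}_{(b)}$ on a state in $I_{c}$ can be rewritten in terms of the single extra generator $[G^{+}_{-\frac{1}{2}}G^{-}_{-\frac{1}{2}}\mathcal{N}_{p,p'}]$ plus even-mode descendants of $[\mathcal{N}_{p,p'}]$ — this is precisely why a second generator is needed here but not in the $\sigma$-twisted case. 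Concretely, I would prove by a secondary induction (on the total number of odd modes, and within that on the PBW degree) that every $[g\,\mathcal{N}_{p,p'}]$ with $g\in U\big((\mf{ns}_{2})_{-}\big)^{\bar 0}$ lies in the ideal generated by $[\mathcal{N}_{p,p'}]$ and $[G^{+}_{-\frac{1}{2}}G^{-}_{-\frac{1}{2}}\mathcal{N}_{p,p'}]$.

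The main obstacle, as indicated, is the bookkeeping for the odd modes: one must be careful that when commuting an odd mode past others to bring it to ``lowest'' position $G^{\pm}_{-\frac12}$ the resulting even modes $L_{n}, J_{n}$ have $n\le 0$ so that the reduction stays inside $U\big((\mf{ns}_{2})_{-}\big)\mathcal{N}_{p,p'}$, and that the single vector $G^{+}_{-\frac{1}{2}}G^{-}_{-\frac{1}{2}}\mathcal{N}_{p,p'}$ together with even descendants of $\mathcal{N}_{p,p'}$ really suffices — equivalently, that modulo $O_{\id}(V_{c})$ any $G^{+}_{-a}G^{-}_{-b}$ acting on $I_{c}$ produces nothing genuinely new beyond the $a=b=\tfrac12$ case. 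This is where the defining relations of $O_{\id}(V_{c})$ (which allow one to lower mode indices at the cost of terms already in the Zhu algebra) do the essential work. Once this is established, the two cosets generate $I_{c}+O_{\id}(V_{c})$ and the theorem follows.
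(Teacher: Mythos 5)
Your proposal matches the paper's proof in both strategy and key ideas: both reduce to verifying $I_c + O_{\id}(V_c) \subseteq \mathcal{I}_{c,\id}$, then split the even part of $I_c$ modulo $O_{\id}$ into the purely even-mode piece $U\bigl((\mf{ns}_2)_-^{\bar 0}\bigr)\mathcal{N}_{p,p'}$ and the piece carrying the factor $G^{+}_{-\frac{1}{2}}G^{-}_{-\frac{1}{2}}$, handling the former by a direct (even-mode) Zhu reduction and the latter by induction on the PBW filtration of $U\bigl((\mf{ns}_2)_-^{\bar 0}\bigr)$, which is exactly your plan. The paper's write-up is more compressed and leaves the odd-mode descent bookkeeping you flag implicit, but the argument is the same as yours.
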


\begin{proof}
Let $\mathcal{I}_{c,\id}$ be the ideal of $A_{\id}(V_{c})$ generated by
$[\mathcal{N}_{p,p'}]$ and $\bigl[G^{+}_{-\frac{1}{2}}G^{-}_{-\frac{1}{2}}\mathcal{N}_{p,p'}\bigr]$.
We only need to verify $I_{c}+O_{\id}
\subseteq\mathcal{I}_{c,\id}.$
Since the even singular vector $\mathcal{N}_{p,p'}$ generates the maximum proper submodule 
of $V_{c}$ by Example \ref{maxiproper},
it suffices to prove that
$U_{-}\mathcal{N}_{p,p'}+O_{\id}\subseteq\mathcal{I}_{c,\id}$ and 
$G^{+}_{-\frac{1}{2}}G^{-}_{-\frac{1}{2}}
U_{-}\mathcal{N}_{p,p'}+O_{\id}\subseteq\mathcal{I}_{c,\id}$, where
$U_{-}:=U\bigl((\mf{ns}_{2})_{-}^{\bar{0}}\bigr)$.
The former follows from a direct computation and
the latter is proved by induction with respect to the PBW filtration of $U_{-}$.
\end{proof}

\begin{cor}[\cite{eholzer1997unitarity}]\label{Kernel2}
The kernel of the mapping
$$\pi_{\id}\circ i_{\id}\colon \mathbb{C}[{\sf h},{\sf q}]\xrightarrow{\simeq}
A_{\id}(V_{c})
\to A_{\id}(L_{c})$$
generated by the two polynomials
$$f_{c}:=i_{\id}^{-1}\bigl([\mathcal{N}_{p,p'}]\bigr)
,\ g_{c}:=i_{\id}^{-1}\bigl([G^{+}_{-\frac{1}{2}}G^{-}_{-\frac{1}{2}}\mathcal{N}_{p,p'}]\bigr)\in\mathbb{C}[{\sf h},{\sf q}].$$
\end{cor}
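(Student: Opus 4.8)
The plan is to deduce this corollary formally from the preceding theorem. First I would observe that $\pi_{\id}\circ i_{\id}\colon\mathbb{C}[{\sf h},{\sf q}]\to A_{\id}(L_{c})$ is a surjective homomorphism of $\mathbb{C}$-algebras, being the composite of the isomorphism $i_{\id}$ of Proposition~\ref{zhu} with the canonical projection $\pi_{\id}$; hence its kernel is $i_{\id}^{-1}(\Ker\pi_{\id})$.

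Next I would invoke the preceding theorem, which identifies $\Ker\pi_{\id}$ with the ideal of $A_{\id}(V_{c})$ generated by the two cosets $[\mathcal{N}_{p,p'}]$ and $\bigl[G^{+}_{-\frac{1}{2}}G^{-}_{-\frac{1}{2}}\mathcal{N}_{p,p'}\bigr]$ (recall $A_{\id}(V_{c})\cong\mathbb{C}[{\sf h},{\sf q}]$ is commutative, so there is no distinction between left, right, and two-sided ideals here). Since an algebra isomorphism sends the ideal generated by a subset onto the ideal generated by its image, applying $i_{\id}^{-1}$ yields
$$\Ker(\pi_{\id}\circ i_{\id})=i_{\id}^{-1}(\Ker\pi_{\id})=\bigl\langle i_{\id}^{-1}([\mathcal{N}_{p,p'}]),\ i_{\id}^{-1}\bigl([G^{+}_{-\frac{1}{2}}G^{-}_{-\frac{1}{2}}\mathcal{N}_{p,p'}]\bigr)\bigr\rangle_{\sf ideal}=\langle f_{c},g_{c}\rangle_{\sf ideal},$$
which is precisely the assertion.

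All of the genuine content --- namely, that the two cosets above already generate $\Ker\pi_{\id}$, which in turn rests on Example~\ref{maxiproper} (the even singular vector $\mathcal{N}_{p,p'}$ generates the maximum proper ideal $I_{c}$ of $V_{c}$) together with the structure of $O_{\id}(V_{c})$ and the induction on the PBW filtration carried out there --- has already been established in the preceding theorem, so there is no real obstacle in this corollary; it is a formal restatement of that theorem under the identification $i_{\id}$. The only residual task, not needed for the statement but of interest for the applications, would be to compute $f_{c}$ and $g_{c}$ explicitly as polynomials in ${\sf h}$ and ${\sf q}$ by carrying out the relevant products in the Zhu algebra starting from the explicit formulas for $\mathcal{N}_{p,p'}$ recorded in the examples above.
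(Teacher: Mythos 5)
Your argument is correct and is exactly how the corollary is intended to follow: since $i_{\id}$ is an algebra isomorphism, $\Ker(\pi_{\id}\circ i_{\id})=i_{\id}^{-1}(\Ker\pi_{\id})$, and by the preceding theorem this is the preimage of the ideal generated by $[\mathcal{N}_{p,p'}]$ and $\bigl[G^{+}_{-\frac{1}{2}}G^{-}_{-\frac{1}{2}}\mathcal{N}_{p,p'}\bigr]$, i.e.\ the ideal $\langle f_{c},g_{c}\rangle$. The paper leaves this as immediate, and your formal justification matches.
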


\begin{rem}\label{EG2}
In \cite[\S3]{eholzer1997unitarity}, W.~Eholzer and M.R.~Gaberdiel 
discussed the relation between the structure of the Zhu algebra 
$A_{\id}(L_{c})$ and
the existence of an ``additional independent bosonic singular vector'' $\mathcal{N}$ in 
$V_{c}$.
By the uniqueness, their singular vector $\mathcal{N}$
coincides with $\mathcal{N}_{p,p'}$ (up to non-zero scalar multiple).
\end{rem}

By standard computations (see e.g.\,\cite[Lemma 1.1]{kac1994vertex}),
one can verify that 
the pair $(f_{c},g_{c})$ coincides with $(p_{1},p_{2})$ appeared in \cite[Table 3.1]{eholzer1997unitarity}.

\subsection{Application 2: Structure of Frenkel--Zhu's bimodule}

In this subsection, we compute the Frenkel--Zhu bimodule structure on
a certain simple module over the simple $\mathcal{N}=2$ vertex operator superalgebra of central charge $c=c_{3,2}=-1$.

We first compute the $\mathbb{Z}_{2}$-graded $A_{\id}(V_{c})$-bimodule structure of
 $A_{\id}\big(\mathcal{M}_{j,c}^{+}\big).$
We always identify $A_{\id}(V_{c})$ with $\mathbb{C}[{\sf h},{\sf q}]$ by the isomorphism $i_{\sf id}$
in Proposition \ref{zhu}.

\begin{lem}\label{FZchiral}
Set a $\mathbb{Z}_{2}$-graded structure on 
$\mathbb{C}[x_{\ell},{x}_{r},{y}]\oplus\mathbb{C}[x_{\ell},{x}_{r},{y}]\psi$
by ${\sf deg}\,f=\bar{0}$ and ${\sf deg}\,f\psi=\bar{1}$ for $f\in\mathbb{C}[x_{\ell},{x}_{r},{y}]$.
For $j\in\mathbb{C}$, we define a $\mathbb{Z}_{2}$-graded $\mathbb{C}[{\sf h},{\sf q}]$-bimodule structure on
$\mathbb{C}[x_{\ell},{x}_{r},{y}]\oplus\mathbb{C}[x_{\ell},{x}_{r},{y}]\psi$
by
\begin{align*}
&{\sf h}.(f+g\psi):=x_{\ell}(f+g\psi),\ {\sf q}.(f+g\psi):=
(y+j)f+(y+j-1)g\psi,\\
&(f+g\psi).{\sf h}:=x_{r}(f+g\psi),\ (f+g\psi).{\sf q}:=y(f+g\psi)
\end{align*}
for $f,g\in\mathbb{C}[x_{\ell},{x}_{r},{y}]$.
Then there exists a unique $\mathbb{Z}_{2}$-graded $\mathbb{C}[{\sf h},{\sf q}]$-bimodule isomorphism
$$\mathbb{C}[x_{\ell},{x}_{r},{y}]\oplus\mathbb{C}[x_{\ell},{x}_{r},{y}]\psi
\to A_{\id}\big(\mathcal{M}_{j,c}^{+}\big)$$
such that
$1\mapsto\big[\ket{j,c}^{\sf sc}\big]$ and $\psi\mapsto\big[G_{-\frac{1}{2}}^{-}\ket{j,c}^{\sf sc}\big].$
\end{lem}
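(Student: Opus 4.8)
The plan is to compute $A_{\id}$ of the chiral Verma module $\mathcal{M}_{j,c}^{+}$ directly from its PBW basis, using the standard machinery for Frenkel--Zhu bimodules. Recall that $\mathcal{M}_{j,c}^{+}$ is freely generated over $(\mf{ns}_{2})_{-}$ from $\ket{j,c}^{\sf sc}$, subject only to $G_{-1/2}^{+}\ket{j,c}^{\sf sc}=0$; a PBW basis is therefore given by ordered monomials in $L_{-n}$ $(n\geq 1)$, $J_{-n}$ $(n\geq 1)$, $G_{-r}^{+}$ $(r\geq 3/2)$, $G_{-r}^{-}$ $(r\geq 1/2)$ applied to $\ket{j,c}^{\sf sc}$. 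The key algebraic fact is that modulo $O_{\id}(\mathcal{M}_{j,c}^{+})$ every such vector can be reduced to a polynomial in $L_{-1},J_{-1}$ times one of the two ``bottom'' vectors $\ket{j,c}^{\sf sc}$ or $G_{-1/2}^{-}\ket{j,c}^{\sf sc}$, exactly as in \cite{frenkel1992vertex} and \cite{kac1994vertex}. So first I would establish the surjectivity of the proposed map by a descent/reduction argument: using the analogue of \cite[Lemma 1.1]{kac1994vertex} (descent lemmas for $A_{\id}$ of a module), show $A_{(\ell-2-m)}v\in O_{\id}$ for $A\in (V_c)_{\Delta}$, $m\geq 0$, $\ell$ large, which lets one push every negative mode of index $\leq -2$ down to index $-1$, and handle the fermionic generators $G^{\pm}$ similarly, noting $G_{-1/2}^{+}$ already annihilates the bottom and $(G_{-1/2}^{-})^2=0$ forces at most one factor of $G_{-1/2}^{-}$, i.e.\ the $\mathbb{Z}_2$-grading with the single odd generator $\psi=[G_{-1/2}^{-}\ket{j,c}^{\sf sc}]$.

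**Second,** I would pin down the bimodule action on the two generators. The left action of $[\mathbf{L}]=[\omega]$ and $[\mathbf{J}]=[J_{-1}\mathbf{1}^c]$ is given by $A*[v]$, which for the conformal vector produces the operator $L_{-1}+L_0$ acting from the left, hence on the bottom vector yields multiplication by the (formal) left Virasoro variable — call it $x_\ell$ — since the nonnegative modes $L_n$ $(n\geq 1)$ annihilate and $L_0$ contributes the conformal weight, which in $A_{\id}$ becomes a polynomial variable once we remember that distinct PBW monomials live in distinct $L_0$-weight spaces; similarly $[\mathbf{J}]*$ gives $J_{-1}+J_0$, and $J_0\ket{j,c}^{\sf sc}=j\ket{j,c}^{\sf sc}$ while $J_0 G_{-1/2}^{-}\ket{j,c}^{\sf sc}=(j-1)G_{-1/2}^{-}\ket{j,c}^{\sf sc}$ because $[J_0,G_{-1/2}^{-}]=-G_{-1/2}^{-}$ — this is precisely the $y+j$ versus $y+j-1$ split in the statement, with $y$ the ``$J_{-1}$-content'' variable. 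The right action $[v]*A$ uses the opposite convention: $\omega$ acts on the right simply as multiplication by the conformal weight (no derivative term, by the residue formula for the right action), giving the variable $x_r$; and $\mathbf{J}$ on the right gives $y$ with no shift, since the right action reads off $J_0$-eigenvalue via the untwisted residue with no contribution from $j$ (the $j$-shift is a purely left-module artifact of how the generator sits over the vacuum). I would verify these four formulas by the explicit residue/normal-ordering identities, checking in particular the compatibility (associativity) of left and right actions, which amounts to $[\mathbf{L}]$ and $[\mathbf{J}]$ acting on opposite sides commuting — automatic since $A_{\id}(V_c)=\mathbb{C}[{\sf h},{\sf q}]$ is commutative and the bimodule is built on a free module.

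**Third,** I would establish injectivity (freeness) of the map. Here the point is that $\mathcal{M}_{j,c}^{+}$ is a \emph{Verma-type} (freely generated) module, so $A_{\id}(\mathcal{M}_{j,c}^{+})$ is as large as possible: by the general principle that for an induced/free module the Frenkel--Zhu bimodule is free over the appropriate polynomial algebra on the space of ``ground states'' (here two-dimensional, $\ket{j,c}^{\sf sc}$ and $G_{-1/2}^{-}\ket{j,c}^{\sf sc}$), the reduction in the first step is forced to be a bijection, not merely a surjection. Concretely, I would exhibit an inverse by sending a PBW monomial to the product of its ``$L_0$-weight record'' on the three variables — or, cleaner, appeal to the functoriality and the fact that $A_{\id}$ of a parabolically-induced module is computed by \cite[Theorem A]{frenkel1992vertex} (or the graded version in \cite{li1999determining}) as the induced bimodule, which here is manifestly the free one described. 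The $\mathbb{Z}_2$-grading is respected because $O_{\id}$ is a graded subspace and the reduction preserves fermion number mod $2$.

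**The main obstacle** I anticipate is the bookkeeping in the right action: getting the \emph{absence} of a derivative shift for $[\mathbf{L}].{\sf h}$ on the right and the \emph{absence} of the $j$-shift for $[v].{\sf q}$ exactly right, since these depend on the precise (and somewhat asymmetric) definition of the $O_{\id}$-relations for bimodules versus for the algebra, and it is easy to conflate the two conventions. I would resolve this by writing out the action of $\omega$ and $J_{-1}\mathbf{1}^c$ on $[\ket{j,c}^{\sf sc}]$ from both sides using the defining residue formulas and the descent lemmas, being careful that the right action of a weight-$\Delta$ vector $A$ on $[v]$ is $\sum_{\ell\geq 0}\binom{\Delta-1}{\ell}A_{(\ell-1)}v$ up to $O_{\id}$-terms with the \emph{module} generators, and that for $A=\omega$ the operator $\sum_\ell\binom{1}{\ell}L_{\ell-1}$ contributes only $L_{-1}+L_0$, whose $L_{-1}$-part is then absorbed into the weight-variable on the module side rather than producing a separate derivative. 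Once these four one-line verifications are done, uniqueness of the isomorphism is immediate from surjectivity plus the fact that the images of $1$ and $\psi$ are prescribed.
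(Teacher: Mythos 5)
Your overall strategy is the same as the paper's: establish existence and uniqueness of the bimodule homomorphism, describe a spanning set of $O_{\id}(\mathcal{M}_{j,c}^{+})$ via descent relations, show surjectivity by reducing every PBW monomial to the normal forms over the two ground states, and deduce bijectivity via a PBW/associated-graded argument. The alternative route you mention via parabolic-induction functoriality and \cite[Theorem~A]{frenkel1992vertex} is a genuinely different idea not used in the paper, but you would still have to adapt it carefully to the $\mathcal{N}=2$ superconformal setting rather than assume it transfers.

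However, there is a concrete error in your computation of the $\mathbf{L}$-actions that has conceptual consequences. You write the correct formula
$[v]*A=\sum_{\ell\geq 0}\binom{\Delta-1}{\ell}A_{(\ell-1)}v$, but then substitute $A_{(\ell-1)}\leadsto L_{\ell-1}$ for $A=\mathbf{L}$. Since $\mathbf{L}_{(n)}=L_{n-1}$ in the paper's conventions, one has $\mathbf{L}_{(\ell-1)}=L_{\ell-2}$, so the right action of $\mathbf{L}$ is $L_{-2}+L_{-1}$, \emph{not} $L_{-1}+L_{0}$ and certainly not ``just multiplication by the conformal weight.'' The same shift error gives a wrong left action: $[\mathbf{L}]*v=\sum_{\ell}\binom{2}{\ell}\mathbf{L}_{(\ell-1)}v=(L_{-2}+2L_{-1}+L_{0})v$, not $(L_{-1}+L_{0})v$. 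The distinction matters: in your version the right action of $\mathsf{h}$ would be tied to the $L_{0}$-weight, hence determined by the PBW degrees and \emph{not} an independent polynomial variable, contradicting the claimed structure $\mathbb{C}[x_{\ell},x_{r},y]$. With the correct formulas, the leading terms are $x_{r}\sim L_{-2}$, $x_{\ell}-x_{r}\sim L_{-1}$, and $y\sim J_{-1}$, which precisely matches the reduced PBW basis $\{L_{-1}^{a}L_{-2}^{b}J_{-1}^{c}\}$ of the even part (and the same basis applied to $G^{-}_{-\frac{1}{2}}\ket{j,c}^{\sf sc}$ for the odd part), and is what makes the associated-graded inverse construction go through. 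Your $J$-analysis (left minus right $=J_{0}$, giving the $y+j$ versus $y+j-1$ split) is correct. Fix the $L$-mode indexing and the rest of your plan is sound.
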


\begin{proof}
The existence and uniqueness of such a bimodule homomorphism are easily verified by calculation.
By some computations (cf.\,\cite[\S5]{wang1993rationality},\,\cite[Lemma 3.1]{kac1994vertex}),
one can prove that $\mathcal{O}_{\id}\big(\mathcal{M}_{j,c}^{+}\big)$ is linearly spanned by
vectors of the form
$(\mathbf{L}_{(-\ell-2)}+2\mathbf{L}_{(-\ell-1)}+\mathbf{L}_{(-\ell)})v$,
$(\mathbf{J}_{(-\ell-2)}+\mathbf{J}_{(-\ell-1)})v$,
or $(\mathbf{G}^{\pm}_{(-\ell-1)}+\mathbf{G}^{\pm}_{(-\ell)})v$
for $\ell\in\mathbb{Z}_{\geq0}$ and $v\in\mathcal{M}_{j,c}^{+}$.
Then, by the PBW theorem, we can explicitly construct the inverse mapping of 
the induced linear mapping
between the associated graded vector spaces (note that we consider the left-hand side is canonically identified with its associated graded space).
This proves the bijectivity of the bimodule homomorphism.
\end{proof}

Next, as a corollary of the classification in \cite[Theorem 7.2]{Ad99},
the set of isomorphism classes of $\mathbb{Z}_{2}$-graded simple $L_{-1}$-modules is given by
\begin{align*}
\left\{\left.
\mathcal{L}_{\frac{\epsilon^{2}}{3},\frac{2\epsilon}{3},-1},\,
\Pi\mathcal{L}_{\frac{\epsilon^{2}}{3},\frac{2\epsilon}{3},-1}\right|
\epsilon\in\{-1,0,1\}\right\}
\sqcup\left\{\left.
\mathcal{L}_{-\frac{1+3j^{2}}{8},j,-1},\,
\Pi\mathcal{L}_{-\frac{1+3j^{2}}{8},j,-1}\right|j\in\mathbb{C}\right\},
\end{align*}
where $\Pi$ is the parity changing functor on the category of $\mathbb{Z}_{2}$-graded vector spaces.
In what follows, we denote the corresponding $\mathbb{Z}_{2}$-graded 
simple left $A_{\sf id}(L_{-1})$-modules (cf.\,\cite[Theorem 1.3]{kac1994vertex}) by
$$\big\{\,
\mathbb{C}(\epsilon),\,\Pi\mathbb{C}(\epsilon)\,\big|\,
\epsilon\in\{-1,0,1\}\big\}
\sqcup\big\{\mathbb{C}_{j},\,\Pi\mathbb{C}_{j}\,\big|\,j\in\mathbb{C}\,\big\}.$$
Note that we have 
$\mathcal{L}(1,0)\simeq\mathcal{L}_{0,0,-1}$, $\mathcal{L}(2,0)\simeq\mathcal{L}_{\frac{1}{3},\frac{2}{3},-1}$,
and $\mathcal{L}(2,0)^{1}\simeq\mathcal{L}_{\frac{1}{3},-\frac{2}{3},-1}$.
Then, by using the isomorphism in Lemma \ref{FZchiral}, we can describe the structure of $A_{\id}\big(\mathcal{L}_{\frac{1}{3},\frac{2}{3},-1}\big)$ as follows.

\begin{lem}\label{FZbim}
The kernel of the natural surjective $\mathbb{C}[{\sf h},{\sf q}]$-bimodule homomorphism
$$\mathbb{C}[x_{\ell},{x}_{r},{y}]\oplus\mathbb{C}[x_{\ell},{x}_{r},{y}]\psi
\xrightarrow{\simeq}A_{\id}\big(\mathcal{M}_{\frac{2}{3},-1}^{+}\big)
\to A_{\id}\big(\mathcal{L}_{\frac{1}{3},\frac{2}{3},-1}\big)$$
is generated by
\begin{align*}
&f_{1}:=3P(P+R)-Q,\\
&f_{2}:=(2P+R)(P-R),\\
&f_{3}:=(2P+R)\big((3P-4)(P-R)-3Q+2\big)\\
&g_{1}\psi:=(2P+2R-1)\psi,\\
&g_{2}\psi:=(Q-P-R)\psi,\\
&g_{3}\psi:=\big(4P^{2}+(1-2R)P-2R^{2}+2R-3Q\big)\psi,\\
\end{align*}
where $P:=x_{\ell}-x_{r}$, $Q:=x_{\ell}+x_{r}+\frac{1}{3}$, and $R:=y+\frac{1}{3}$.
\end{lem}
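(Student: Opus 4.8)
The plan is to identify the bimodule $A_{\id}\big(\mathcal{L}_{\frac{1}{3},\frac{2}{3},-1}\big)$ as a quotient of $A_{\id}\big(\mathcal{M}_{\frac{2}{3},-1}^{+}\big)$ and compute the kernel explicitly via the two singular vectors determined in the previous section. First I would recall that $\mathcal{L}_{\frac{1}{3},\frac{2}{3},-1}\simeq\mathcal{L}(2,0)$, and that by Theorem \ref{chBGG2} (with $(p,p')=(3,2)$, $r=2$) the simple module $\mathcal{L}(2,0)$ is the quotient of the Verma-type module $\mathcal{V}(2)$ by the submodule generated by the singular vector $\mathcal{N}_{3,2}(2)\in\mathcal{V}(2)$ of Theorem \ref{Uniqueness}. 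Since $\mathcal{V}(2)=\mathcal{M}^{+}_{2aj_{2,0},c}/(G^{-}_{-\frac{3}{2}}G^{-}_{-\frac{1}{2}})$ by Lemma \ref{weyl} and $2aj_{2,0}=\frac{2}{3}$, one has a presentation of $\mathcal{L}(2,0)$ as a quotient of the chiral Verma module $\mathcal{M}_{\frac{2}{3},-1}^{+}$ by the submodule generated by $G^{-}_{-\frac{3}{2}}G^{-}_{-\frac{1}{2}}\ket{\tfrac{2}{3},c}^{\sf sc}$ together with (a lift of) $\mathcal{N}_{3,2}(2)$. Applying the right-exact functor $A_{\id}(-)$ (which sends quotients to quotients and, on the generating vectors, sends a submodule generator $v$ to the bimodule elements $[v]$ and $[G^{+}_{-\frac12}v],[G^{-}_{-\frac12}v],\dots$ needed to generate the image of the whole submodule) then expresses $A_{\id}\big(\mathcal{L}_{\frac13,\frac23,-1}\big)$ as the quotient of $A_{\id}\big(\mathcal{M}^{+}_{\frac{2}{3},-1}\big)$ by the sub-bimodule generated by the cosets of these finitely many vectors.

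Next I would carry out the computation in the coordinates furnished by Lemma \ref{FZchiral}: under the isomorphism there, $A_{\id}\big(\mathcal{M}^{+}_{\frac{2}{3},-1}\big)\cong\mathbb{C}[x_\ell,x_r,y]\oplus\mathbb{C}[x_\ell,x_r,y]\psi$ as a $\mathbb{C}[\sf h,\sf q]$-bimodule, with $1\mapsto[\ket{\tfrac23,c}^{\sf sc}]$ and $\psi\mapsto[G^{-}_{-\frac12}\ket{\tfrac23,c}^{\sf sc}]$. I would work out, using the standard Zhu-algebra calculus (the $O_{\id}$-relations recalled in the proof of Lemma \ref{FZchiral} and the mode formulas, cf.\,\cite[Lemma 1.1]{kac1994vertex}), the images in $\mathbb{C}[x_\ell,x_r,y]\oplus\mathbb{C}[x_\ell,x_r,y]\psi$ of the relevant generating vectors of the submodule: namely the coset of $G^{-}_{-\frac32}G^{-}_{-\frac12}\ket{\tfrac23,c}^{\sf sc}$ (which, being annihilated by $G^{+}_{-\frac12}$-type operators on the chiral side, should land in the even part), and the cosets $[\mathcal{N}_{3,2}(2)]$, $[G^{+}_{-\frac12}\mathcal{N}_{3,2}(2)]$, $[G^{-}_{-\frac12}\mathcal{N}_{3,2}(2)]$, $[G^{+}_{-\frac12}G^{-}_{-\frac12}\mathcal{N}_{3,2}(2)]$ obtained from the explicit form of the singular vector (computable as in the $\mathcal{N}_{3,2}$ example in \S\ref{app}, but for $r=2$ rather than $r=1$). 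Reducing each of these modulo $O_{\id}$ and rewriting in terms of the convenient variables $P=x_\ell-x_r$, $Q=x_\ell+x_r+\frac13$, $R=y+\frac13$ should produce exactly the six generators $f_1,f_2,f_3,g_1\psi,g_2\psi,g_3\psi$ listed, after discarding redundant ones (e.g. using $f_1,f_2$ one checks the remaining even images lie in the ideal they generate, and similarly for the odd images against $g_1\psi,g_2\psi$).

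The main obstacle I expect is bookkeeping rather than conceptual: one must correctly compute the images under $A_{\id}$ of the $G^{\pm}$-descendants of a fairly long singular vector $\mathcal{N}_{3,2}(2)$, keeping track of both the left and right actions of $\sf h,\sf q$ and the parity shift $y\mapsto y+j,\ y\mapsto y+j-1$ built into the bimodule structure of Lemma \ref{FZchiral}; a single sign or binomial-coefficient slip propagates through. A secondary subtlety is verifying that the six displayed elements actually \emph{generate} the kernel and not merely lie in it — this requires showing that the quotient of $\mathbb{C}[x_\ell,x_r,y]\oplus\mathbb{C}[x_\ell,x_r,y]\psi$ by the sub-bimodule $\langle f_1,f_2,f_3,g_1\psi,g_2\psi,g_3\psi\rangle$ has the expected size, which can be done by comparing with the known structure of the finite simple $A_{\id}(L_{-1})$-modules (the $\mathbb{C}(\epsilon)$, $\mathbb{C}_j$ and their parity shifts listed above) via \cite[Theorem 1.3]{kac1994vertex}, i.e.\ by checking that the associated variety / fiber dimensions match. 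Once surjectivity of the bimodule map from Lemma \ref{FZchiral} onto $A_{\id}\big(\mathcal{L}_{\frac13,\frac23,-1}\big)$ is in hand (it is, being a composite of surjections), the proof reduces to these two verifications.
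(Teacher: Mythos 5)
Your overall strategy matches the paper's: pass from the presentation of $\mathcal{L}(2,0)$ as a quotient of $\mathcal{M}^{+}_{\frac{2}{3},-1}$ by the submodule generated by $w_1:=G^{-}_{-\frac{3}{2}}G^{-}_{-\frac{1}{2}}\big|2/3,-1\big\rangle^{\sf sc}$ together with a lift $w_2$ of $\mathcal{N}_{3,2}(2)$, and then describe the kernel on the level of Frenkel--Zhu bimodules by the cosets of finitely many low-mode $G^{\pm}$-descendants of $w_1$ and $w_2$, reduced in the $\mathbb{C}[x_{\ell},x_r,y]\oplus\mathbb{C}[x_{\ell},x_r,y]\psi$ coordinates of Lemma~\ref{FZchiral}. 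The paper, in addition, explicitly writes $w_2$ and verifies it is singular, and uses a slightly different descendant list for $w_1$ (namely $G^{+}_{\frac12}w_1$, $G^{+}_{-\frac12}w_1$, $G^{+}_{-\frac12}G^{+}_{\frac12}w_1$ rather than $w_1$ itself together with $G^{\pm}_{-\frac12}$-descendants), but these are computational details.

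The genuine gap is in how you close the argument that the six displayed elements \emph{generate} the kernel. The paper obtains this directly from \cite[Proposition 1.2]{kac1994vertex} together with the PBW theorem: that result says precisely that the image in $A_{\id}(\mathcal{M}^{+})$ of the submodule $N$ generated by a vector $w$ is the sub-bimodule generated by the cosets of $w$ and its finitely many low-mode $G^{\pm}_{\pm\frac12}$-descendants, with deeper descendants killed by the $O_{\id}$-reduction relations. You instead propose to certify generation of the kernel by ``comparing fiber dimensions against the known finite simple $A_{\id}(L_{-1})$-modules via \cite[Theorem~1.3]{kac1994vertex}.'' This is problematic for two reasons. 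First, $A_{\id}\big(\mathcal{L}_{\frac13,\frac23,-1}\big)$ is a quotient of a polynomial ring by a finitely generated ideal, so it is infinite-dimensional, and a fiber-by-fiber dimension count over all simple $A_{\id}(L_{-1})$-modules is exactly the content of Proposition~\ref{FR}, which is proved \emph{from} the present lemma; your proposed verification is therefore circular. Second, even granting knowledge of those fiber dimensions, matching them does not directly prove the two ideals coincide unless one also controls the associated graded (PBW) picture, which is what the paper's citation supplies. So the correct closure is the one the paper makes: invoke \cite[Proposition~1.2]{kac1994vertex} plus the PBW filtration, rather than a dimension count.
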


\begin{proof}
Set $w_{1}:=G_{-\frac{3}{2}}^{-}G_{-\frac{1}{2}}^{-}\big|2/3,-1\big\rangle^{\sf sc}$ and
\begin{align*}
&w_{2}:=
(4J_{-2}-3G_{-\frac{3}{2}}^{+}G_{-\frac{1}{2}}^{-}-2L_{-1}J_{-1}-2J_{-1}^{2}+4L_{-1}^{2})
\big|2/3,-1\big\rangle^{\sf sc}.
\end{align*}
By direct calculation, one can verify
$w_{2}$ is a singular vector in $\mathcal{M}_{\frac{2}{3},-1}^{+}$,
which is a lift of $\mathcal{N}_{3,2}(2)$ with respect to
the natural projection from $\mathcal{M}_{\frac{2}{3},-1}^{+}$ to $\mathcal{V}(2)$.
By Lemma \ref{weyl} and Theorem \ref{Uniqueness}, 
$w_{1}$ and $w_{2}$ generate the maximum proper submodule of $\mathcal{M}_{\frac{2}{3},-1}^{+}$.
Then, by \cite[Proposition 1.2]{kac1994vertex} and 
the PBW theorem, the kernel of
the natural projection from $A_{\id}\big(\mathcal{M}_{\frac{2}{3},-1}^{+}\big)$
to $A_{\id}\big(\mathcal{L}_{\frac{1}{3},\frac{2}{3},-1}\big)$
is generated by
$f_{1}':=\big[G_{-\frac{1}{2}}^{+}G_{\frac{1}{2}}^{+}w_{1}\big]$,
$f_{2}':=[w_{2}]$, $f_{3}':=\big[G_{-\frac{1}{2}}^{+}G_{-\frac{1}{2}}^{-}w_{2}\big]$,
$g_{1}'\psi:=\big[G_{\frac{1}{2}}^{+}w_{1}\big]$, 
$g_{2}'\psi:=\big[G_{-\frac{1}{2}}^{+}w_{1}\big]$,
and $g_{3}'\psi:=\big[G_{-\frac{1}{2}}^{-}w_{2}\big]$.
By some computations, one can verify that $f_{i}\propto f_{i}'$
and $g_{i}\psi\propto g_{i}'\psi$ for $i\in\{1,2,3\}$.
This completes the proof.
\end{proof}

At last, by Lemma \ref{FZbim} and direct calculation, we obtain the following.

\begin{prp}\label{FR}
For $\epsilon\in\{-1,0,1\}$ and $j\in\mathbb{C}$, we have
\begin{align*}
&A_{\id}\big(\mathcal{L}_{\frac{1}{3},\frac{2}{3},-1}\big)\otimes_{A_{\id}(L_{-1})}
\mathbb{C}(\epsilon)
\simeq 
\begin{cases}
\mathbb{C}(0)
 & \text{ if }\epsilon=-1,\\
\mathbb{C}(1)
 & \text{ if }\epsilon=0,\\
\Pi\mathbb{C}_{\frac{1}{3}} & \text{ if }\epsilon=1,
\end{cases}\\
&A_{\id}\big(\mathcal{L}_{\frac{1}{3},\frac{2}{3},-1}\big)\otimes_{A_{\id}(L_{-1})}
\mathbb{C}_{j}
\simeq 
\begin{cases}
\displaystyle
\mathbb{C}_{\frac{1}{3}}
\oplus
\Pi\mathbb{C}(-1) & \text{ if } j=-\frac{1}{3},\\
\mathbb{C}_{j+\frac{2}{3}} & \text{ if } j\neq-\frac{1}{3}.
\end{cases}
\end{align*}
\end{prp}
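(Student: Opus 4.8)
The plan is to turn each tensor product into a one-variable quotient computation inside the explicit model of $A_{\id}\big(\mathcal{L}_{\frac{1}{3},\frac{2}{3},-1}\big)$ provided by Lemma \ref{FZchiral} and Lemma \ref{FZbim}. First I would observe that, by Corollary \ref{Kernel2}, $A_{\id}(L_{-1})$ is a commutative, purely even quotient of $\mathbb{C}[{\sf h},{\sf q}]$, so each simple left $A_{\id}(L_{-1})$-module recalled above is one-dimensional over $\mathbb{C}$, with ${\sf h}$ and ${\sf q}$ acting by the $(L_{0},J_{0})$-eigenvalues of the top of the corresponding simple $L_{-1}$-module: $\big(\tfrac{\epsilon^{2}}{3},\tfrac{2\epsilon}{3}\big)$ for $\mathbb{C}(\epsilon)$ and $\Pi\mathbb{C}(\epsilon)$, and $\big(-\tfrac{1+3j^{2}}{8},j\big)$ for $\mathbb{C}_{j}$ and $\Pi\mathbb{C}_{j}$. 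Writing $B:=A_{\id}\big(\mathcal{L}_{\frac{1}{3},\frac{2}{3},-1}\big)$, for a one-dimensional left module $\mathbb{C}_{\chi}$ one has $B\otimes_{A_{\id}(L_{-1})}\mathbb{C}_{\chi}\cong B\big/\big(B({\sf h}-\chi({\sf h}))+B({\sf q}-\chi({\sf q}))\big)$, the two submodules being formed by the right action; since $\mathbb{C}[{\sf h},{\sf q}]\twoheadrightarrow A_{\id}(L_{-1})$, this quotient may equally be computed over $\mathbb{C}[{\sf h},{\sf q}]$. In the presentation of Lemma \ref{FZchiral}, where the right action of ${\sf h}$, resp.\ ${\sf q}$, is multiplication by $x_{r}$, resp.\ $y$, this amounts to substituting $x_{r}\mapsto\chi({\sf h})$ and $y\mapsto\chi({\sf q})$ in the six bimodule generators $f_{1},f_{2},f_{3},g_{1}\psi,g_{2}\psi,g_{3}\psi$ of Lemma \ref{FZbim} and passing to the quotient of $\mathbb{C}[x_{\ell}]\oplus\mathbb{C}[x_{\ell}]\psi$.

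Second, I would carry out the five resulting computations ($\epsilon=-1,0,1$; $j=-\tfrac13$; $j\neq-\tfrac13$). After the substitution the even component becomes $\mathbb{C}[x_{\ell}]$ modulo the ideal generated by the specialized $f_{i}$'s, i.e.\ modulo their gcd, and the odd component is $\mathbb{C}[x_{\ell}]\psi$ modulo the gcd of the specialized $g_{i}$'s; since $f_{1},f_{2},f_{3},g_{1},g_{2},g_{3}$ already appear in essentially factored form in the variables $P=x_{\ell}-x_{r}$, $Q=x_{\ell}+x_{r}+\tfrac13$, $R=y+\tfrac13$, these gcd's are found by a short calculation. One then reads off the dimension of each graded piece together with the left action, ${\sf h}$ acting by $x_{\ell}$, and ${\sf q}$ acting by $y+\tfrac23$ on the even part and by $y-\tfrac13$ on the odd part, and matches the resulting one-dimensional module with the list of simples by comparing eigenvalues. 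For instance, $\epsilon=-1$ gives even part $\mathbb{C}[x_{\ell}]/(x_{\ell})$ with ${\sf q}$ acting by $0$ and vanishing odd part, hence $\mathbb{C}(0)$; $\epsilon=0$ gives even part $\mathbb{C}[x_{\ell}]/(x_{\ell}-\tfrac13)$, hence $\mathbb{C}(1)$; and $\epsilon=1$ gives vanishing even part and a one-dimensional odd part, hence $\Pi\mathbb{C}_{1/3}$ once the parity shift in the ${\sf q}$-action is accounted for.

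The main obstacle will be the degenerate case $j=-\tfrac13$ together with the verification that no other value of $j$ is exceptional. For $j\neq-\tfrac13$ I would check that the gcd of the specialized $f_{i}$'s is exactly one linear polynomial and that the specialized $g_{i}$'s generate the unit ideal — one must be slightly careful here since $g_{2}$ can specialize to zero, so that the remaining two generators have to account for everything — giving the single simple module $\mathbb{C}_{j+2/3}$, whose eigenvalue $\big(-\tfrac{1+3(j+2/3)^{2}}{8},\,j+\tfrac23\big)$ is matched against the computed $x_{\ell}$-root and the value $y+\tfrac23$. At $j=-\tfrac13$ one has $P=Q$ and $R=0$; both the even and the odd ideals then drop to a single linear polynomial, and the quotient has even part $\mathbb{C}_{1/3}$ and odd part $\Pi\mathbb{C}(-1)$. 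Since any $\mathbb{Z}_{2}$-graded module over the purely even algebra $A_{\id}(L_{-1})$ is the direct sum of its even and odd parts, this is exactly $\mathbb{C}_{1/3}\oplus\Pi\mathbb{C}(-1)$. Assembling the five cases gives the two displayed families of isomorphisms.
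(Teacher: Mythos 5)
Your proposal is correct and is precisely the ``direct calculation'' the paper leaves implicit: the paper's entire proof of this proposition is the single sentence ``by Lemma \ref{FZbim} and direct calculation, we obtain the following,'' and you have reconstructed exactly what that calculation is — specialize $x_{r}$ and $y$ via the right action, reduce to ideals in $\mathbb{C}[x_{\ell}]$ generated by the specialized $f_{i}$ and $g_{i}$, compute the gcd in each graded piece (noting that $g_{2}$ can vanish, e.g.\ at $\epsilon\in\{0,1\}$ and $j\in\{-\tfrac13,-1\}$, so the remaining generators must carry the argument), and identify the resulting one-dimensional eigenspaces against the classification. The key algebraic observations you use — that $A_{\id}(L_{-1})$ is commutative and purely even, that the tensor product over a commutative ring with a one-dimensional module is a quotient by the right-ideal of the character, and that the bimodule of Lemma \ref{FZbim} reduces to a $\mathbb{C}[x_{\ell}]$-module computation after specialization — are exactly the inputs the authors intend; your treatment of the parity bookkeeping (the shift by $\tfrac{2}{3}$ versus $-\tfrac{1}{3}$ in the left ${\sf q}$-action on the even versus odd parts, and the resulting $\Pi$'s) is also correct.
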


\begin{rem}\label{conjFR}
In \cite[Example 5.2]{sato2017modular}, the second author conjectures that
the dimension of the space of $\mathbb{Z}_{2}$-graded intertwining operators 
(called the fusion rule) of type
$$\left(
\begin{array}{c}
\mathcal{L}_{-\frac{1}{8}(1+3(j+\frac{2}{3})^{2}),j+\frac{2}{3},-1}\\
\mathcal{L}_{\frac{1}{3},\frac{2}{3},-1}\ \ \mathcal{L}_{-\frac{1}{8}(1+3j^{2}),j,-1}
\end{array}
\right)$$
is equal to $1$ if $j\notin\{\frac{1}{3}\}+\frac{2}{3}\mathbb{Z}$.
By a super analog of \cite[Proposition 2.10]{li1999determining} (e.g.\,\cite[Theorem 1.5 (1)]{kac1994vertex}) and Proposition \ref{FR}, the conjectural fusion rule is actually bounded by
$${\sf dim}_{\mathbb{C}}\Hom_{A_{\id}(L_{-1})}\Big(
A_{\id}\big(\mathcal{L}_{\frac{1}{3},\frac{2}{3},-1}\big)\otimes_{A_{\id}(L_{-1})}
\mathbb{C}_{j},\mathbb{C}_{j+\frac{2}{3}}\oplus\Pi\mathbb{C}_{j+\frac{2}{3}}\Big)=1.$$
Based on an appropriate super analog of \cite[Theorem 2.11]{li1999determining},
if the simple $L_{-1}$-module $\mathcal{L}_{-\frac{1}{8}(1+3j^{2}),j,-1}$ is a generalized Verma
weak $L_{-1}$-module (in the sense of \cite[Definition 2.7]{li1999determining})
for $j\notin\{\frac{1}{3}\}+\frac{2}{3}\mathbb{Z}$,
then the conjectural fusion rule is true.
At this moment, however, we do not know how to prove that.
\end{rem}

\appendix

\section{$\mathcal{N}=2$ vertex operator superalgebra}

\subsection{$\mathcal{N}=2$ vertex operator superalgebras}\label{N2VOSA}

For $c\in\mathbb{C}$, the vacuum $\mf{ns}_{2}$-module $V_{c}$ of central charge $c$
is the $\mathbb{Z}_{2}$-graded $\mf{ns}_{2}$-module freely generated by
an even vector ${\bf 1}^{c}$ subject to the relations
\begin{align*}
&(\mf{ns}_{2})_{+}{\bf 1}^{c}:=\{0\},\ G_{-\frac{1}{2}}^{+}{\bf 1}^{c}
=G_{-\frac{1}{2}}^{-}{\bf 1}^{c}:=0,\\
&L_{0}{\bf 1}^{c}:=0,\ J_{0}{\bf 1}^{c}:=0,\ C{\bf 1}^{c}:=c{\bf 1}^{c}.
\end{align*}

\begin{prp}[{\cite[Proposition 1.1]{Ad99}}]
There exists a unique vertex superalgebra
$(V_{c},Y,{\bf 1}^{c})$ which is strongly generated by the following mutually local fields
\begin{align*}
Y(\mathbf{L};z)\ &=\sum_{n\in\mathbb{Z}}\ \mathbf{L}_{(n)}z^{-n-1}
\;:=\sum_{n\in\mathbb{Z}}L_{n-1}z^{-n-1},\\
Y(\mathbf{G}^{\pm};z)&
=\sum_{n\in\mathbb{Z}}\ \mathbf{G}^{\pm}_{(n)}z^{-n-1}
:=\sum_{n\in\mathbb{Z}}G^{\pm}_{n-\frac{1}{2}}z^{-n-1},\\
Y(\mathbf{J};z)\ &
=\sum_{n\in\mathbb{Z}}\ \mathbf{J}_{(n)}z^{-n-1}
\;:=\sum_{n\in\mathbb{Z}}J_{n}z^{-n-1},
\end{align*}
where 
$\mathbf{L}:=L_{-2}{\bf 1}^{c}$, $\mathbf{G}^{\pm}:=G^{\pm}_{-\frac{3}{2}}{\bf 1}^{c}$,
and $\mathbf{J}:=J_{-1}{\bf 1}^{c}$.
In addition, the vertex superalgebra $(V_{c},Y,{\bf 1}^{c})$
together with $\mathbf{L}$ as a conformal vector 
forms a vertex operator superalgebra of central charge $c$.
\end{prp}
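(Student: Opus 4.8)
The plan is to construct $(V_{c},Y,{\bf 1}^{c})$ by means of the standard existence (reconstruction) theorem for vertex superalgebras, using that $V_{c}$ is a positive-energy $\mf{ns}_{2}$-module. Since $(\mf{ns}_{2})_{+}{\bf 1}^{c}=\{0\}$, the module $V_{c}$ is a quotient of the Verma module $\mathcal{M}_{0,0,c}$; in particular it is $L_{0}$-graded, $V_{c}=\bigoplus_{n\in\frac{1}{2}\mathbb{Z}_{\geq0}}(V_{c})_{n}$, with $(V_{c})_{0}=\mathbb{C}{\bf 1}^{c}$ and each $(V_{c})_{n}$ finite-dimensional by the PBW theorem. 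Hence each generating series
$$a^{L}(z):=\sum_{n\in\mathbb{Z}}L_{n-1}z^{-n-1},\qquad a^{G^{\pm}}(z):=\sum_{n\in\mathbb{Z}}G^{\pm}_{n-\frac{1}{2}}z^{-n-1},\qquad a^{J}(z):=\sum_{n\in\mathbb{Z}}J_{n}z^{-n-1}$$
is a parity-homogeneous field on $V_{c}$, and each is creative: $a^{X}(z){\bf 1}^{c}\in V_{c}[[z]]$ has constant term $\mathbf{L}$, $\mathbf{G}^{\pm}$, $\mathbf{J}$ respectively. Setting $T:=L_{-1}$, one checks $T{\bf 1}^{c}=0$ and, from the $\mf{ns}_{2}$-brackets $[L_{-1},-]$, the relation $[T,a^{X}(z)]=\partial_{z}a^{X}(z)$ for $X\in\{L,G^{\pm},J\}$.

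The substantive step is to verify that these four fields are pairwise mutually local, and I expect this bookkeeping to be the main (though entirely routine) obstacle. The (super)commutator of any two of them is read off directly from the (anti-)commutation relations of $\mf{ns}_{2}$: the bracket $[L_{n},L_{m}]=(n-m)L_{n+m}+\frac{1}{12}(n^{3}-n)C\delta_{n+m,0}$ gives the usual Virasoro locality of $a^{L}$ with itself; $[L_{n},J_{m}]=-mJ_{n+m}$ and $[L_{n},G^{\pm}_{r}]=\bigl(\tfrac{n}{2}-r\bigr)G^{\pm}_{n+r}$ give locality of $a^{L}$ with $a^{J}$ and with $a^{G^{\pm}}$; $[J_{n},J_{m}]=\tfrac{n}{3}C\delta_{n+m,0}$ and $[J_{n},G^{\pm}_{r}]=\pm G^{\pm}_{n+r}$ control $a^{J}$ with $a^{J}$ and with $a^{G^{\pm}}$; and $[G^{\pm}_{r},G^{\pm}_{s}]=0$ together with $[G^{+}_{r},G^{-}_{s}]=2L_{r+s}+(r-s)J_{r+s}+\tfrac{1}{3}\bigl(r^{2}-\tfrac{1}{4}\bigr)C\delta_{r+s,0}$ give super-locality among the odd fields. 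In every case the (super)commutator is a finite sum of terms $\partial_{w}^{k}\delta(z-w)$ times a generating field, so mutual locality of finite order holds; the care required is only in the signs for the odd--odd pairs and the half-integer mode shifts. Granting this, the existence theorem produces a vertex superalgebra $(V_{c},Y,{\bf 1}^{c})$ with $Y(\mathbf{L};z)=a^{L}(z)$, $Y(\mathbf{G}^{\pm};z)=a^{G^{\pm}}(z)$, $Y(\mathbf{J};z)=a^{J}(z)$, translation operator $T$, strongly generated by $\mathbf{L},\mathbf{G}^{\pm},\mathbf{J}$; and $Y$ is unique because, by strong generation, the field attached to any PBW monomial in the generators is forced by the $n$-th product (OPE) formula.

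Finally, I would check that $\mathbf{L}$ is a conformal vector of central charge $c$. Writing $L_{n}:=\mathbf{L}_{(n+1)}$, the Virasoro relations with central charge $c$ are exactly the defining $\mf{ns}_{2}$-relations for the $L_{n}$; $\mathbf{L}_{(0)}=L_{-1}=T$ is the translation operator; and $L_{0}=\mathbf{L}_{(1)}$ acts diagonalizably on $V_{c}$ with spectrum in $\frac{1}{2}\mathbb{Z}_{\geq0}$ and finite-dimensional eigenspaces, by the grading recorded above. Therefore $(V_{c},Y,{\bf 1}^{c})$ with conformal vector $\mathbf{L}$ is a $\frac{1}{2}\mathbb{Z}_{\geq0}$-graded vertex operator superalgebra of central charge $c$, in which $\mathbf{J}$ and $\mathbf{G}^{\pm}$ have conformal weights $1$ and $\frac{3}{2}$.
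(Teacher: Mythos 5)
Your argument is correct, and it is the standard one: verify that the four generating fields on the vacuum module $V_c$ are creative, translation-covariant, and mutually (super)local, then invoke the reconstruction/existence theorem for vertex superalgebras, and finally check the conformal-vector axioms. Note that the paper does not reprove this statement — it simply cites \cite[Proposition 1.1]{Ad99} — so there is no in-paper proof to diverge from; your reconstruction-theorem argument is the expected and essentially only route, and the details you flag (signs in the odd--odd OPEs, half-integer mode shifts, and the vanishing $L_{-1}{\bf 1}^{c}=G^{\pm}_{-1/2}{\bf 1}^{c}=0$ needed for creativity) are exactly the points that need care.
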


When $c\neq0$, we write
$L_{c}$ for its simple quotient vertex operator superalgebra.

\subsection{Spectral flow twists}\label{SF}

In this subsection $V$ stands for $V_{c}$ or $L_{c}$.
Let $\varepsilon,\varepsilon'\in\big\{0,\frac{1}{2}\big\}$ 
and let $(\mathcal{M},Y_{\mathcal{M}})$ be a $(1-\varepsilon)\mathbb{Z}_{\geq0}$-gradable 
$\mathbb{Z}_{2}$-graded $\sigma^{1-2\varepsilon}$-twisted $V$-module.
The next lemma is proved in a similar way of \cite[Proposition 3.2]{li1997physics}
(see \cite[Theorem 3.3.8]{xu1998introduction} for detail).

\begin{lem}\label{SFtw}
For $\theta\in\mathbb{Z}+\varepsilon'$, we define
$$\Delta(\theta\mathbf{J};z):=z^{\theta J_{0}}{\sf exp}\Bigl(\sum_{\ell=1}^{\infty}
\frac{\theta J_{\ell}}{-\ell}(-z)^{-\ell}\Bigr)\in\End(V)[\![z^{\pm(1-\varepsilon')}]\!].$$
Then the mapping
\begin{equation*}
Y_{\mathcal{M}^{\theta}}(?;z)
:=Y_{\mathcal{M}}(\Delta(\theta\mathbf{J};z)?;z):\,V\rightarrow\End(\mathcal{M})[\![z^{\pm(\frac{1}{2}+|\varepsilon-\varepsilon'|)}]\!]
\end{equation*}
gives rise to a $(1-|\varepsilon-\varepsilon'|)\mathbb{Z}_{\geq0}$-gradable 
$\mathbb{Z}_{2}$-graded $\sigma^{1-2|\varepsilon-\varepsilon'|}$-twisted $V$-module
$\mathcal{M}^{\theta}:=(\mathcal{M},Y_{\mathcal{M}^{\theta}})$.
\end{lem}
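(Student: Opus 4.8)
The final statement to prove is Lemma \ref{SFtw} (the spectral flow twist lemma). Let me sketch a proof plan.

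The plan is to verify directly that $Y_{\mathcal{M}^{\theta}}(\cdot;z) := Y_{\mathcal{M}}(\Delta(\theta\mathbf{J};z)\cdot;z)$ satisfies the axioms of a $\sigma^{1-2|\varepsilon-\varepsilon'|}$-twisted $V$-module, following the strategy of \cite[Proposition 3.2]{li1997physics} for the untwisted Heisenberg/Virasoro deformation adapted to the superalgebra and twisted setting. The key structural fact is that the operator-valued series $\Delta(\theta\mathbf{J};z)$ is a ``shift'' operator built from the $U(1)$-current $\mathbf{J}$: it is grouplike in an appropriate sense, and its defining feature is the conjugation/commutation identity
\begin{equation*}
\Delta(\theta\mathbf{J};z)\,Y_{\mathcal{M}}(a;w)\,\Delta(\theta\mathbf{J};z)^{-1}
= Y_{\mathcal{M}}\big(\Delta(\theta\mathbf{J};z-w)a;w\big)
\end{equation*}
valid for the generating fields $a\in\{\mathbf{J},\mathbf{L},\mathbf{G}^{\pm}\}$, which one checks from the $\mf{ns}_2$ commutation relations $[J_n,J_m]=\tfrac{n}{3}C\delta_{n+m,0}$, $[J_n,L_m]=-mJ_{n+m}$, $[J_n,G^\pm_r]=\pm G^\pm_{n+r}$. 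First I would establish this conjugation formula, noting that it is the reason the field $Y_{\mathcal{M}}(\Delta(\theta\mathbf{J};z)a;z)$ has the claimed fractional mode grading: since $J_0$ acts on $\mathbf{G}^\pm$ by $\pm1$, the factor $z^{\theta J_0}$ contributes a shift of $\pm\theta\in\mathbb{Z}\pm\varepsilon'$ to the powers of $z$, which combined with the original $\sigma^{1-2\varepsilon}$-twist gives powers in $\tfrac12+|\varepsilon-\varepsilon'|+\mathbb{Z}$.

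Next I would verify the three module axioms in turn. The vacuum axiom $Y_{\mathcal{M}^{\theta}}(\mathbf{1}^c;z)=\id_{\mathcal{M}}$ is immediate since $\Delta(\theta\mathbf{J};z)\mathbf{1}^c=\mathbf{1}^c$ (the vacuum is $J_0$-weight zero and annihilated by $J_\ell$, $\ell>0$). For the twisted Jacobi identity (or equivalently twisted associativity plus commutativity), the standard trick is to use the conjugation formula to rewrite a product $Y_{\mathcal{M}^{\theta}}(a;z_1)Y_{\mathcal{M}^{\theta}}(b;z_2)$ in terms of $\Delta(\theta\mathbf{J};z_1)$ times a product of original vertex operators times inverse shift operators, reduce the desired identity to the known twisted Jacobi identity for $Y_{\mathcal{M}}$, and then track how $\Delta$ intertwines the formal-variable delta-function manipulations. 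It suffices to check this on the strong generators by a standard reconstruction/normal-ordering argument, since $\Delta(\theta\mathbf{J};z)$ is compatible with $\partial_z$ and with products of fields. Finally I would confirm the grading statement: the new $L_0$-grading is $Y_{\mathcal{M}^\theta}$-conformal weight $L_0^{\mathcal{M}} + \theta J_0^{\mathcal{M}} + $ (a constant), so $\mathcal{M}^\theta$ is $(1-|\varepsilon-\varepsilon'|)\mathbb{Z}_{\geq0}$-gradable as claimed, using that $J_0$ acts with eigenvalues in $\varepsilon'+\mathbb{Z}$ or similar on a $\sigma^{1-2\varepsilon}$-twisted module.

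The main obstacle I expect is the careful bookkeeping in the twisted Jacobi identity: one must handle the fractional powers of the formal variables coming from \emph{both} the pre-existing $\sigma^{1-2\varepsilon}$-twist and the new shift by $\theta J_0$, and confirm that the composite is a genuine $\sigma^{1-2|\varepsilon-\varepsilon'|}$-twisted module rather than something with inconsistent branch behavior — in particular the absolute value $|\varepsilon-\varepsilon'|$ (as opposed to $\varepsilon+\varepsilon'$ or $\varepsilon-\varepsilon'$) is exactly what makes the monodromy consistent, and pinning that down is the delicate point. Since this is entirely parallel to \cite[Theorem 3.3.8]{xu1998introduction} and \cite[Proposition 3.2]{li1997physics}, I would cite those for the detailed verification and only spell out the $\mf{ns}_2$-specific computation of the conjugation formula and the resulting shift in the grading.
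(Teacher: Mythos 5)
Your proposal takes essentially the same approach as the paper: the paper itself does not spell out a proof but simply remarks that the lemma ``is proved in a similar way of \cite[Proposition 3.2]{li1997physics} (see \cite[Theorem 3.3.8]{xu1998introduction} for detail)'', and your sketch expands exactly that strategy --- Li's $\Delta$-operator conjugation identity for the $J$-current, reduction of the twisted Jacobi identity to that of $Y_{\mathcal{M}}$, and the shift of the $L_0$- and mode-gradings by $\theta J_0$ --- while ultimately deferring to the same two references for the detailed verification.
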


We call $\mathcal{M}^{\theta}$ the spectral flow twisted module of $\mathcal{M}$.
Note that the spectral flow twisted module of a highest weight module is not 
a highest weight module in general.

Let $\mathcal{C}^{\varepsilon}$ be the category of $(1-\varepsilon)\mathbb{Z}_{\geq0}$-gradable
$\sigma^{1-2\varepsilon}$-twisted $V$-modules.
Then it is clear that 
the assignments $\mathcal{M}\mapsto\mathcal{M}^{\theta}$
and $\Hom(\mathcal{M}_{1},\mathcal{M}_{2})\to\Hom(\mathcal{M}_{1}^{\theta},\mathcal{M}_{2}^{\theta});\,f\mapsto f$
give rise to an equivalence of categories
$(?)^{\theta}\colon\mathcal{C}^{\varepsilon'}\to\mathcal{C}^{|\varepsilon-\varepsilon'|}$
(cf.\,\cite[Lemma B.4]{sato2016equivalences}).
Note that a similar equivalence holds also in the $\mathbb{Z}_{2}$-graded version.

\section{Zhu's algebra and Frenkel--Zhu's bimodule}\label{ZAU}

In this section, we first recall the definition of twisted Zhu's algebra and
Frenkel--Zhu's bimodule originally introduced in \cite{zhu1996modular}
and \cite{frenkel1992vertex}.

\subsection{Definition}\label{DefZhuSec}

Let $V=V^{\bar{0}}\oplus V^{\bar{1}}$
be a vertex operator superalgebra with the $\frac{1}{2}\mathbb{Z}_{\geq0}$-grading
$$V^{\bar{i}}=\bigoplus_{\Delta\in\mathbb{Z}_{\geq0}+\frac{i}{2}}V_{\Delta}\ \ (i\in\{0,1\})$$
with respect to the operator $L_{0}$.
Let $\sigma$ denote the vertex operator superalgebra automorphism of $V$ defined by
$\sigma|_{V^{\bar{i}}}=(-1)^{i}\id_{V^{\bar{i}}}$ for $i\in\{0,1\}$.
Let $\mathcal{M}=\mathcal{M}^{\bar{0}}\oplus\mathcal{M}^{\bar{1}}$ be a 
$\mathbb{Z}_{2}$-graded $g$-twisted $V$-module for $g\in\{\sf id,\sigma\}$.
For $A\in V_{\Delta}\cap V^{\bar{i}}$ and $v\in \mathcal{M}^{\bar{j}}$, we define 
\begin{align*}
&A\underset{g}{*} v:= 
\begin{cases}
\displaystyle
\delta_{i,0}\sum_{\ell=0}^{\infty}\binom{\Delta}{\ell}A_{(\ell-1)}v &\text{ if }g={\sf id},\\
\displaystyle
\sum_{\ell=0}^{\infty}\binom{\Delta}{\ell}A_{(\ell-1)}v &\text{ if }g=\sigma,
\end{cases}\\
&v\underset{g}{*} A:=
\begin{cases}
\displaystyle
\delta_{i,0}\sum_{\ell=0}^{\infty}\binom{\Delta-1}{\ell}A_{(\ell-1)}v &\text{ if }g={\sf id},\\
\displaystyle
(-1)^{ij}\sum_{\ell=0}^{\infty}\binom{\Delta-1}{\ell}A_{(\ell-1)}v &\text{ if }g=\sigma,
\end{cases}\\
&A\underset{g}{\circ} v:=
\begin{cases}
\displaystyle
\sum_{\ell=0}^{\infty}\binom{\Delta-\frac{i}{2}}{\ell}A_{(\ell-2+i)}v &\text{ if }g={\sf id},\\
\displaystyle
\sum_{\ell=0}^{\infty}\binom{\Delta}{\ell}A_{(\ell-2)}v &\text{ if }g=\sigma
\end{cases}
\end{align*}
and extend them by linearity.
We also define $A_{V,g}(\mathcal{M}):=\mathcal{M}/O_{V,g}(\mathcal{M})$,
where $$O_{V,g}(\mathcal{M})
:=\spn_{\mathbb{C}}
\Bigl\{A\underset{g}{\circ}v\,\Big|\,A\in V,\,v\in \mathcal{M}\Bigr\}.$$
Then the next proposition is a natural generalization of \cite[Theorem 2.1.1]{zhu1996modular}
and \cite[Theorem 1.5.1]{frenkel1992vertex} (see e.g.\,\cite{xu1998introduction})

\begin{prp}\label{DefZhu}
The bilinear mappings $\underset{g}{*}\colon V\times\mathcal{M}\to\mathcal{M}$
and $\underset{g}{*}\colon \mathcal{M}\times V\to\mathcal{M}$
give rise to the following structures:
\begin{enumerate}
\item a $\mathbb{Z}_{2}$-graded associateve algebra structure on $A_{V,g}(V)$,
\item a $\mathbb{Z}_{2}$-graded $A_{V,g}(V)$-bimodule structure on $A_{V,g}(\mathcal{M})$.
\end{enumerate}
\end{prp}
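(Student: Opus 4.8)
The plan is to reprove Proposition~\ref{DefZhu} as a routine extension of Zhu \cite{zhu1996modular} and Frenkel--Zhu \cite{frenkel1992vertex} to the $g$-twisted super setting (see also \cite{kac1994vertex}, \cite{dong1998twisted}, \cite{xu1998introduction}), so I would only indicate the main points. The one structural input is the Borcherds (twisted Jacobi) identity for a $\mathbb{Z}_{2}$-graded $g$-twisted $V$-module $\mathcal{M}$: for homogeneous $A\in V^{\bar i}$, $B\in V$, $u\in\mathcal{M}$ it encodes the commutator and associativity relations among the modes $A_{(m)}$, $B_{(n)}$, where $m$ runs over $\mathbb{Z}$ if $g=\id$ or $i=0$ and over $\mathbb{Z}+\tfrac12$ if $g=\sigma$ and $i=1$, and where the Koszul sign $(-1)^{|A||B|}$ enters through the skew-symmetry $Y(A;z)B=(-1)^{|A||B|}e^{zL_{-1}}Y(B;-z)A$. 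I would write it in the residue form
$$\Res_{z}\bigl(\iota_{w,z}F(z,w)\,Y_{\mathcal{M}}(Y(A;z-w)B;w)u\bigr)=\Res_{z}\bigl(\cdots\bigr)-(-1)^{|A||B|}\Res_{z}\bigl(\cdots\bigr)$$
and use it throughout; all binomial-series expansions below are finite on each $L_{0}$-graded piece, so everything is well defined.

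First I would prove the twisted super-analogue of Zhu's key lemma \cite[Lemma 2.1.2]{zhu1996modular}: for $A\in V_{\Delta}\cap V^{\bar i}$, $B\in V$, $v\in\mathcal{M}$ and integers $m\ge n\ge 0$, the element $\Res_{z}\bigl((1+z)^{\Delta-\frac{i}{2}\delta_{g,\id}+n}z^{-2+i\delta_{g,\id}-m}Y_{\mathcal{M}}(A;z)v\bigr)$ lies in $O_{V,g}(\mathcal{M})$, together with its right-handed variant; both follow by expanding the binomial factor and reindexing, exactly as in \cite{zhu1996modular}. From these, applying the Borcherds identity to $A\underset{g}{\circ}v$ and pushing modes past one another yields the stability $V\underset{g}{*}O_{V,g}(\mathcal{M})\subseteq O_{V,g}(\mathcal{M})$ and $O_{V,g}(\mathcal{M})\underset{g}{*}V\subseteq O_{V,g}(\mathcal{M})$; taking $\mathcal{M}=V$ this shows $O_{V,g}(V)$ is a two-sided ideal of $(V,\underset{g}{*})$, so $\underset{g}{*}$ descends to $A_{V,g}(V)$ and to $A_{V,g}(\mathcal{M})$.

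Next I would verify, modulo $O_{V,g}(\mathcal{M})$, the three congruences
$$(A\underset{g}{*}B)\underset{g}{*}v\equiv A\underset{g}{*}(B\underset{g}{*}v),\qquad (v\underset{g}{*}A)\underset{g}{*}B\equiv v\underset{g}{*}(A\underset{g}{*}B),\qquad (A\underset{g}{*}v)\underset{g}{*}B\equiv A\underset{g}{*}(v\underset{g}{*}B)$$
for homogeneous $A,B\in V$, $v\in\mathcal{M}$; after expanding the binomial coefficients each reduces to an instance of the Borcherds identity plus the key lemma, and the Koszul signs $(-1)^{|A||B|}$, $(-1)^{|A||v|}$, $(-1)^{|B||v|}$ come out precisely matching the signs built into the definitions of $\underset{g}{*}$. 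Taking $\mathcal{M}=V$ and $v=\mathbf{1}^{c}$ gives associativity of $\underset{g}{*}$ on $A_{V,g}(V)$, and the vacuum axioms $Y(\mathbf{1}^{c};z)=\id$, $A_{(n)}\mathbf{1}^{c}=0$ for $n\ge0$ show that $[\mathbf{1}^{c}]$ (with the $-\tfrac{c}{24}$-type correction when $g=\sigma$, as in Proposition~\ref{Prp41}) is a two-sided unit; this gives part (1), and the remaining two congruences give the left/right $A_{V,g}(V)$-module compatibility in part (2). Since $O_{V,g}(\mathcal{M})$ is $L_{0}$-graded, hence $\mathbb{Z}_{2}$-graded, and $\underset{g}{*}$ is even, the $\mathbb{Z}_{2}$-gradings on $A_{V,g}(V)$ and $A_{V,g}(\mathcal{M})$ descend automatically.

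The main obstacle is purely bookkeeping: keeping the half-integer mode shifts of the odd fields in the $g=\sigma$ case and the three families of Koszul signs consistent through all the residue manipulations, so that the congruences above acquire exactly the signs appearing in the definitions of $\underset{g}{*}$ and $\underset{g}{\circ}$; no idea beyond \cite{zhu1996modular} and \cite{frenkel1992vertex} is required.
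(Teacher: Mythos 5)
The paper gives no proof of Proposition~\ref{DefZhu}: it is stated as ``a natural generalization'' of Zhu's Theorem~2.1.1 and Frenkel--Zhu's Theorem~1.5.1, with a pointer to Xu's book for the twisted super setting. Your sketch is precisely the standard argument those references carry out, and it is correct in outline: the key lemma $\Res_{z}\bigl((1+z)^{\Delta-\frac{i}{2}\delta_{g,\id}+n}z^{-2+i\delta_{g,\id}-m}Y_{\mathcal{M}}(A;z)v\bigr)\in O_{V,g}(\mathcal{M})$ matches the paper's definitions of $\underset{g}{\circ}$ in both the $\id$- and $\sigma$-twisted cases, the two-sided ideal property of $O_{V,g}$ follows from it together with the Borcherds identity, and the three associativity congruences are exactly what is needed for (1) and (2). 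One small slip: the unit of $A_{V,g}(V)$ is simply $[\mathbf{1}^{c}]$ in both cases; the $-\tfrac{c}{24}$ shift you allude to belongs to the map $i_{\sigma}(Z)=[\mathbf{L}]-\tfrac{c}{24}[\mathbf{1}^{c}]$ in Proposition~\ref{Prp41}, not to the identity element itself. This does not affect the argument. It would also be worth making explicit in the $g=\id$ case that the $\delta_{i,0}$ factors in $\underset{g}{*}$ kill the odd part, so that the Koszul sign in the right product only ever matters in the $\sigma$-twisted case, consistent with $A_{\id}(V)$ being purely even (as in Proposition~\ref{zhu}) while $A_{\sigma}(V)$ is a genuine superalgebra.
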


Since $I_{c}\underset{g}{\circ}\mathcal{M}=\{0\}$ for any $g$-twisted $L_{c}$-module $\mathcal{M}$,
we simply write $A_{g}(\mathcal{M})$ and $O_{g}(\mathcal{M})$ for $A_{V,g}(\mathcal{M})$
and $O_{V,g}(\mathcal{M})$, respectively.
In this paper we use the notation $[A]$ for the image of $A\in V$
under the natural projection from $V$ to $A_{g}(V)$.

\bibliographystyle{alpha}

\bibliography{ref}

\end{document}